\theoremstyle{plain}
\newtheorem{theorem}{Theorem}[section]
\newtheorem{lemma}[theorem]{Lemma}
\newtheorem{proposition}[theorem]{Proposition}
\newtheorem{corollary}[theorem]{Corollary}
\newtheorem{definition}[theorem]{Definition}
\theoremstyle{remark}
\newtheorem*{example}{Example}
\newtheorem*{notation}{Notation}
\newtheorem*{acknowledgment}{Acknowledgment}
\numberwithin{equation}{section}
\newcommand{\A}{\mathbb{A}}
\newcommand{\bB}{\mathbb{B}}
\newcommand{\B}{\mathbb{B}}
\newcommand{\K}{\mathbb{K}}
\newcommand{\R}{\mathbb{R}}
\newcommand{\C}{\mathbb{C}}
\newcommand{\N}{\mathbb{N}}
\newcommand{\Z}{\mathbb{Z}}
\newcommand{\sss}{\mathbf{s}}
\newcommand{\rrr}{\mathbf{r}}
\newcommand{\ttt}{\mathbf{t}}
\newcommand{\vvv}{\mathbf{v}}
\newcommand{\Aut}{\mathrm{Aut}}
\newcommand{\id}{\mathrm{id}}
\newcommand{\SJ}{{\mathrm{SJ}}}
\newcommand{\tr}{{\mathrm{tr}}}
\newcommand{\eps}{\varepsilon}
\newcommand{\inv}{^{-1}}
\newcommand{\msk}{\medskip}
\newcommand{\ssk}{\smallskip}
\newcommand{\nin}{\noindent}
\newcommand{\CC}{{\mathcal C}}
\begin{document}

\title[Simplicial differential calculus]{Simplicial differential calculus, divided
differences, and construction of Weil functors}
%\title[...]{Cubic {\em versus} simplicial differential calculus}
%\title{Simplicial differential calculus and
%divided differences in several variables}
%\title[Differential calculus and Weil functors in arbitrary characteristic]{Differential 
%calculus and Weil functors in arbitrary characteristic}

\author{Wolfgang Bertram}

\address{Institut \'{E}lie Cartan Nancy \\
Nancy-Universit\'{e}, CNRS, INRIA \\
Boulevard des Aiguillettes, B.P. 239 \\
F-54506 Vand\oe{}uvre-l\`{e}s-Nancy, France}

\email{\url{bertram@iecn.u-nancy.fr}}

\subjclass[2000]{ 
13B02, %commalg: ring  	Extension theory
13F20, % comm.alg:  Polynomial rings and ideals; rings of integer-valued polynomials [See also 11C08, 13B25]
13N99, %differential algebra:  	None of the above, but in this section
14B10, %algebraic gemetry:   	Infinitesimal methods [See also 13D10]
39A12, %Difference eqns:  	Discrete version of topics in analysis
58A20, % Analysis on mfds:  	Jets
58A32} % ..  	Natural bundles

\keywords{divided differences, differential calculus, jet functor, scalar extension, Weil functor,
Taylor expansion}

\begin{abstract} 
We define a {\em simplicial differential calculus} by
generalizing {\em divided differences} from the case of curves to the case of 
general maps, defined on general topological vector spaces, or even on modules over
a topological ring $\K$. 
This calculus has the advantage
that the number of evaluation points grows linearly with the degree, and not
exponentially as in the classical, ``cubic'' approach. In particular,
it is better adapted to the case of {\em positive characteristic}, where
it permits to define Weil functors corresponding to scalar extension
from $\K$ to truncated polynomial rings $\K[X]/(X^{k+1})$.
\end{abstract}

\maketitle

\section*{Introduction}

When one tries to develop differential calculus in positive characteristic, a major
problem arises from the fact that the Taylor expansion of a function $f$ involves
a factor $\frac{1}{k!}$ in front of the differential $d^k f(x)$.
In the present work, we define a version
of differential calculus, called {\em simplicial differential calculus}, that allows one to avoid this factor.
The methods are completely general and should be of interest also in the case of
characteristic zero since they point a way to reduce the growth of the number of 
variables from an exponential one, arising in the usual ``cubic'' differential calculus, to a linear one. 
Moreover, we hope that they will build a bridge between differential geometry and algebraic
geometry since they show how to ``embed'' infinitesimal methods used there, based on
``simplicial'' ring extensions, into ordinary cubic differential calculus.  

\ssk
Let us explain the problem first by looking at functions of {\em one} variable (i.e., curves), before
coming to the general case. If $f:I \to W$ is such a function, say of class $\CC^2$, the second
differential at $x$ can be obtained as a limit
$$
f''(x) = \lim_{s,t \to 0} \frac{f(x+t+s) - f(x+t) - f(x+s) + f(x)}{st} \,  .
$$
This formula arises simply from iterating the formula for the first differential.
There are similar formulae for higher differentials $d^k f(x)$;
at each stage the number of points where evaluation of $f$ takes place is doubled, so
that $2^k$ points are involved. This corresponds to the vertices of a hypercube, and
therefore we call this version of differential calculus
``cubic''. A systematic generalization of this calculus is the general differential calculus
developed in \cite{BGN04}, where a characteristic feature is that we look at higher order
difference quotient maps $f^{[k]}$ involving evaluation of $f$ at $2^k$ generically
pairwise different points, and in \cite{Be08} we followed this line of thought by investigating
the differential geometry of {\em higher order tangent maps $T^k f$}. 
The advantage of this calculus is its easy inductive definition; 
the drawback is the exponential growth of variables.

\ssk Now, in the case of curves,  there is another formula for the second differential:
$$
\frac{1}{2} f''(x) =  \lim_{a,b,c \to x} \Big(
\frac{f(a)}{(a-b)(a-c)} + \frac{f(b)}{(b-a)(b-c)} + \frac{f(c)}{(c-a)(c-b)} \Big)
$$
It involves evaluation only at $3$ points, and it has the advantage of automatically producing
the second derivative term of the Taylor expansion, so that we can write the expansion
without having to divide by factorials.
This generalizes to any order: define
{\em divided differences} by the formula (see Chapter 7 in \cite{BGN04},
 \cite{Sch} or  \cite{Rob}; see also the Wikepedia-page on ``divided differences''):
$$
[t_1,\ldots,t_{k+1};f] := \sum_{j=1}^{k+1}
{ f(t_j) \over \prod_{i \not= j} (t_j - t_i) }
$$
where $t_i \not= t_j$.
If $f$ is $\CC^k$ in the usual sense (say, over $\R$ or $\C$), then the divided differences
 admit a {\em continuous} extension to a map 
defined on $I^{k+1}$  (including   the ``singular set'', where some of the
$t_i$'s coincide), and the $k$-th derivative $f^{(k)}(t)$ is obtained as a
``diagonal value'' of this extended function via
$$
\frac{1}{k!}  f^{(k)}(t)= [t,\ldots,t;f]. 
$$
Since evaluation of $f$ only at $k+1$ points is used, we call this definition
of higher order differentials {\em simplicial}. Geometrically, the factor $k!$ represents the
ratio between the volume of the standard hypercube and the standard 
simplex.

\ssk
Next let us look at functions of several (finitely or infinitely many) variables,
say $f:U \to W$ defined on an open part $U$ of some topological vector space $V$.
As shown in \cite{BGN04}, the ``cubic'' calculus generalizes very well to this
framework. However,
to our knowledge, a reasonable ``simplicial'' theory, 
generalizing the divided differences, has so far not been 
developed (see below for some remarks on related literature).
  By ``reasonable'' we mean a calculus
that shares some main features with ``usual'' calculus -- above all, there must be some version of
the chain rule, so that one can define categories like smooth manifolds, bundles, etc.
For this it is not enough to look simply at curves $\gamma$ and to consider
divided differences of the function $f \circ \gamma$; rather, these should appear as
certain special values of the general simplicial theory.
We propose the following general definition of generalized divided differences
(Section 1.2):
$$
f^{\langle k \rangle}(v_0,\ldots,v_k;s_0,\ldots,s_k)  := 
\frac{f(v_0)}{(s_0-s_1)\cdots (s_0-s_k)} +
\frac{f\bigl(v_0+(s_1 - s_0)v_1\bigr)}{(s_1-s_0)(s_1-s_2)\cdots (s_1-s_k)}
$$
$$
 \quad \quad  + \ldots  + 
\frac{f\bigl(v_0+(s_k - s_0)v_1 + \ldots + (s_k-s_0) \cdots (s_k - s_{k-1}) 
v_k \bigr)}{
(s_k-s_0) \cdots (s_k - s_{k-1})}
$$
Here, $\vvv:=(v_0,\ldots,v_k)$ is a $k+1$-tuple of ``space variables'',
and $\sss:=(s_0,\ldots,s_k)$ a $k+1$-tuple of ``time (scalar) variables'';
hence the the number of variables grows linearly with $k$.
If $v_0=x$, $s_0=0$, $v_1=h$ and all other $v_j=0$, then we are back in the case of divided
differences of $f \circ \gamma$ for the curve $\gamma(t)=x+th$.
We will say that $f$ is {\em  $k$ times simplicially differentiable}, or {\em of class
$\CC^{\langle k \rangle}$}, if these generalized divided differences extend to  {\em continuous} maps defined
also for singular scalar values (that is, for $\sss$ such that not all
$s_i - s_j$ are invertible, in particular, to $\sss = (0,\ldots,0)$).

\ssk
The main motivation for this definition is that there is indeed a version of the chain rule,
and that the corresponding theory of manifolds and their bundles permits to define
{\em jet bundles} also in positive characteristic; this seems to be indeed the correct
framework for generalizing the theory of {\em Weil functors} to arbitrary 
characteristic 
(see \cite{KMS}, Chapter VIII for an account on the real theory).\footnote{Note, however, that
Thm 35.5 in loc.\ cit.\ contains an error:   not all finite-dimensional
quotients of polynomial algebras are Weil algebras (this is one of the points of the present
work). A corrected version of this claim can be found in Section 1.5 of \cite{Kolar}.}
To state the chain rule, we define for any $\sss$, the {\em simplicial $\sss$-extension}
to be the vector 
$$
\SJ^{(\sss)} f (\vvv):=
\Bigl( f(v_0), f^{\langle 1 \rangle}(v_0,v_1;s_0,s_1),\ldots,f^{\langle k \rangle}(\vvv;\sss) \Bigr) \, ,
$$
so that $\SJ^{(\sss)} f:\SJ^{(\sss)}U \to W^{k+1}$ is a map from an open part
$\SJ^{(\sss)}U$ of $V^{k+1}$ to $W^{k+1}$.
Then the chain rule (Theorem \ref{SimplicialChainRule}) 
says that $\SJ^{(\sss)} (g \circ f) = \SJ^{(\sss)} g \circ \SJ^{(\sss)} f$, i.e.,
 $\SJ^{(\sss)}$ {\em is a covariant functor}. In particular, for
$\sss = (0,\ldots,0)$ this really is a true generalization of the classical chain rule. 
Closely related to this is a result  (Theorem \ref{LimitedTheorem}
  and Corollary \ref{LimitedCorollary}) characterizing 
$\CC^{\langle k \rangle}$-maps as the maps satisfying a certain ``limited expansion'', which
contains as a special case a version of the Taylor expansion involving only
the ``simplicial differentials'' $\SJ^{(0,\ldots,0)}f$, without division by factorials.

\ssk
The chain rule leads directly to the algebraic viewpoint of {\em scalar extension} (Chapter 2), and to the
construction of {\em Weil functors} (Chapter 3).
Here we take advantage of the generality of our framework, allowing to take for $\K$
a commutative topological base {\em ring} -- all definitions and results mentioned so far
make sense in this generality. Now combine this with
the basic observation from the theory of Weil functors: 
applying a covariant, product preserving functor like
$F:=\SJ^{(\sss)}$ to the base ring $\K$ with its structure maps $a$ (addition) and $m$
(multiplication), we get again a ring $(F\K,Fa,Fm)$.
The ring $\SJ^{(\sss)}\K$ thus obtained
is never a field (even if $\K$ is), but it is still a well-behaved
commutative topological ring, and therefore we can speak of smooth maps over this ring.
We prove  (Theorem \ref{SimpScalExtTheorem}): {\em If $f$ is of class $\CC^{<k+m>}$ over $\K$, then
$\SJ^{(\sss)}f$ is of class $\CC^{<m>}$  over $\SJ^{(\sss)}\K$}, and we determine explicitly the
structure of $\SJ^{(\sss)}\K$ (Lemma \ref{SimpScalExtLemma}): 
{\em The ring $\SJ^{(\sss)}\K$ is naturally isomorphic to
the truncated polynomial ring}
$$
\B^\sss:=\K[X]/\bigl( X(X-(s_1-s_0)) \cdot \ldots \cdot (X-(s_k - s_0)) \bigr) \, .
$$
Putting these two results together, for $\sss = 0$, we may say that {\em
$\SJ^{(0,\ldots,0)}$ is the functor of scalar extension from $\K$ to
$\K[X]/(X^{k+1})$}.
These results have multiple applications:
on the one hand, as long as $\sss$ is non-singular (i.e., if  $s_i - s_j$ is invertible for $i \not= j$),
they reduce the complicated structure of finite differences to the 
better accessible structure of rings. On the other hand, the functors $\SJ^{(\sss)}$
carry over to the category of manifolds. To be precise, if $\sss \not= (0,\ldots,0)$,
then $\SJ^{(\sss)}$ is a functor in the category of {\em manifolds with atlas}
(Theorem \ref{FunctorTheorem}), and if $\sss = (0,\ldots,0)$, then
difference calculus contracts to ``local (i.e., support-decreasing) calculus'', 
hence leads to differential geometry: in this case
the functor of scalar extension $\SJ^{(\sss)}$ carries over to the category of manifolds, defining
for each $\K$-manifold $M$ a bundle $\SJ^kM:=\SJ^{(0,\ldots,0)} M$ over $M$ which is
independent of the atlas of $M$.
This is precisely the version of the jet functor that works well in any characteristic,
and it now makes sense to consider $\SJ^k M$ as a manifold {\em defined over
the ring $\SJ^k \K = \K[X]/(X^{k+1})$} (Theorem \ref{WeilFunctorTheorem}).

\ssk
A second main topic of the present work is to investigate the
 relation between ``cubic'' and ``simplicial'' calculus.
Indeed, the point of view of Weil functors has been already investigated in the
``cubic'' framework (\cite{Be08}), where we have  observed that this
framework leads to some loss of information in
 the case of {\em positive} characteristic.\footnote{{\em cf}.\ the note in loc.\ cit., p.\  13;
the theory from \cite{Be08} works in arbitrary characteristic, mainly because we use there
the ``second order Taylor formula'' from \cite{BGN04}, which is already simplicial in nature; but the
theory itself is not simplicial.}
We recall in Section 1.1 the ``cubic'' $\CC^{[k]}$-concept from \cite{BGN04}, and we prove
(Theorem \ref{cubictosimplicialTh}):
{\em ``Cubic implies simplicial'': if $f$ is $\CC^{[k]}$, then $f$ is $\CC^{\langle k \rangle}$.
Moreover, there is an ``embedding'' of the simplicial divided differences into the
cubic higher order difference quotients.} The latter are far too complicated  to allow for an explicit,
``closed'' formula which would be comparable to the simplicial formula given above; all the more it is
appreciable that the point of view of scalar extension works also on the the cubic level:
we define inductively a family of rings $\A^{\ttt}$ (where now $\ttt \in \K^{2^k - 1}$, and
the $\K$-dimension of $\A^\ttt$ is $2^k$) and show (Theorem \ref{scalextTh3}):
{\em The cubic extended tangent functor $T^{(\ttt)}$ can be interpreted as the scalar
extension functor from $\K$ to $\A^\ttt$.}
In particular, for $\ttt = 0$, we get the higher order tangent functors $T^k$ considered
in \cite{Be08}. 
The embedding of simplicial divided differences into the cubic theory then translates
into algebra (Theorem \ref{RingEmbeddingTheorem}):
{\em There is an embedding of algebras $\B^{\sss} \to \A^{\ttt}$ (where $\ttt = \ttt(\sss)$ depends
on $\sss$).}
Correspondingly, if $f$ is $\CC^{[k]}$, the ``simplicial Weil functors'' can be embedded into a family of
``cubic Weil functors'' (Theorem \ref{WeilFunctorTheorem}). 
This embedding is ``off-diagonal'' (i.e., ``most'' components of $\ttt$ are zero, but some are not),
and  has a more subtle structure than
the ``diagonal'' embedding used in \cite{Be08}.
% (and to which it is equivalent only in case of characteristic zero). 

\ssk
Let us add some (possibly incomplete) remarks on related literature. 
The definition of ``$r$-th order difference factorizer''
 in Section 5.b of \cite{Nel88} may be seen as an attempt to define some kind
 of divided differences for several variables; these objects
 are defined in a similar way as the usual divided differences, leading to the serious
 drawback that they are no longer uniquely defined by $f$ as soon as the space dimension
 is bigger than one. Having introduced them, L.D.\ Nel  decides ``not to study them
in any depth in this paper''.
``Simplicial'' objects appear also in {\em synthetic differential geometry}
(see Section I.18 of \cite{Ko81})  and in algebraic geometry
(see \cite{BM01}); the approach is different, and it
would be interesting to investigate in more detail the relation with the theory developed here.

\ssk
Finally, let us mention some open problems and further topics (see also \cite{Be08b}).
Firstly, we conjecture that the converse of Theorem \ref{cubictosimplicialTh} also holds: {\em 
``simplicially smooth implies cubically smooth'', hence both concepts are equivalent}
(to be more precise, we conjecture that this is true at least if  $\K$ is a field
since that assumption has turned out to be sufficient for a similar result concerning
curves, see \cite{BGN04}, Prop.\ 6.9). 
A proof of this conjecture would imply that the simplicial differential 
is always {\em polynomial} (which is indeed the case under the assumption that $f$
be $\CC^{[k]}$), and should also indicate a procedure how to recover, in a 
natural way, the algebras $\A^\ttt$ from the ``smaller'' algebras $\B^\sss$.
Secondly, 
the simplicial point of view suggests the adaptation of
the ``cubic'' differential geometry and Lie theory from \cite{Be08} to this framework.
In a certain sense, this would amount in the combination of the theory of scalar extensions and
Weil functors with ``simplicial'' concepts present in \cite{White}. 
This is of course a vast topic, which will be taken up elsewhere.

\begin{acknowledgment}
I thank my collegue Alain Genestier for stimulating discussions and for
suggesting to me the correct formula describing the generalized 
divided differences, and the unknown referee for careful reading and helpful suggestions. 
\end{acknowledgment}

\begin{notation}
In the following, the base ring $\K$ will be a {\em unital commutative topological ring with dense unit
group} $\K^\times \subset \K$; all $\K$-modules $V,W$ will be {\em topological
$\K$-modules}, and domains of definition $U$ will be {\em open} (or, more generally,
subsets having a dense interior). The class of continuous maps will be denoted by
$\CC^{[0]}$ or $\CC^{<0>}$. 
For some purely algebraic results in Chapter 2, the topology will not be necessary, and
one might instead use arguments of Zariski-density; we leave
such modifications to the reader. 
\end{notation}

\section{Differential calculi}

\subsection{Cubic differential calculus}

 We recall the basic
definitions of the  ``cubic'' theory
developed in \cite{BGN04} (see also Chapter 1 of \cite{Be08}).

\begin{definition} \label{cubicDef} We 
 say that $f:U \to W$ {\em is of class $\CC^{[1]}$} if the {\em first order
difference quotient map}
$$
(x,v,t) \mapsto \frac{f(x+tv)-f(x)}{t}
$$
extends continuously onto the {\em extended domain}
$$
U^{[1]}:=\{ (x,v,t) \in V \times V \times \K | \,
x \in U, x+tv \in U \},
$$
 i.e., if there exists a continuous map
$f^{[1]}:U^{[1]} \to W$ such that
$f^{[1]}(x,v,t)=\frac{f(x+tv)-f(x)}{t}$ 
whenever $t$ is invertible.
By density of $\K^\times$ in $\K$, the map $f^{[1]}$ is unique if it exists, and so is
the value
$$
df(x)v:=f^{[1]}(x,v,0).
$$
The {\em extended tangent map} is then defined by
\begin{align*}
\hat T f: & U^{[1]} \to W^{[1]}=W \times W \times \K, \cr
 & (x,v,t) \mapsto \hat Tf(x,v,t):=\hat T^{(t)}f(x,v):=\big( f(x), f^{[1]}(x,v,t),t \big) .
\end{align*}
\end{definition}

If $f$ is $\CC^{[1]}$,
the differential $df(x):V \to W$ is continuous and linear, and
$\hat T$ is a functor: $\hat T (g \circ f) = \hat Tg \circ \hat Tf$;
this is equivalent to saying that for each $t \in \K$ we have a functor
$\hat T^{(t)}$, and
for $t=0$ this gives the usual chain rule (see loc.\ cit.\ for the easy proofs).
Moreover, for each $t$, the functor $\hat T^{(t)}$ commutes with direct products:
$\hat T^{(t)}(g \times f)$ is naturally identified with
$\hat T^{(t)} f \times \hat T^{(t)} g$. 

\begin{definition}
The classes $\CC^{[k]}$ are defined by induction:
we say that $f$ is {\em of class $\CC^{[k+1]}$} if it is of class $\CC^{[k]}$
and if $f^{[k]}:U^{[k]} \to W$ is again of class $\CC^{[1]}$,
where $f^{[k]}:=(f^{[k-1]})^{[1]}$.
The {\em higher order extended tangent maps} are defined by
$\hat T^{k+1} f:= \hat T (\hat T^k f)$. 
\end{definition}

Among the higher order differentiation rules proved in \cite{BGN04}, 
{\em Schwarz's Lemma} and the
{\em generalized Taylor expansion} are the most important. Both will be discussed in 
more detail later on. Explicit formulae for the higher order difference quotient maps
tend to be very complicated. For convenience of the reader, we give here the explicit formula
in case $k=2$:
\begin{eqnarray*}
& & f^{[2]}((x,v_1,t_1),(v_2,v_{12},t_{12}),t_{2})   \cr
& & \quad=
{f^{[1]}((x,v_1,t_1)+t_{2} (v_2,v_{12},t_{12})) - f^{[1]}(x,v_1,t_1)
\over t_{2}} \cr
& & \quad  =
{f\bigl( x + t_{2} v_2 + (t_1+t_2 t_{12}) ( v_1 + t_{2} v_{12}) \bigr) 
-f(x + t_{2} v_2)
\over  t_{2} (t_1  + t_2 t_{12}) }
- {f(x+t_1 v_1) - f(x) \over t_1 t_{2} } 
\cr
\end{eqnarray*}
where of course it is assumed that the scalars in the denominator belong to $\K^\times$.
Observe also that the factor $t_{12}$ never stands alone, 
hence in the limit $(t_1,t_2) \to (0,0)$, we obtain a
local (i.e., support-decreasing) operator even if $t_{12}$ does not tend to zero (e.g., for $t_{12}=1$);
in finite dimensions over $\R$, by the classical Peetre Theorem (see \cite{KrM97}),
we thus obtain a differential operator. In the general case,
this observation will be taken up later (Theorem \ref{WeilFunctorTheorem}).
It would be hopeless to try to develop the theory by writing out in this way
the formulae for $f^{[k]}$ in general --  they
 involve, in a fairly complicated way, the values of $f$ at $2^k$ generically different points. 
Here is a first step towards an efficient organization of
 these variables: we group together ``space variables'' on 
one hand and ``time variables'' on the other hand; that is, $f^{[k]}$ contains
$2^k-1$ variables from $\K$ which we may fix and look at the remaining transformation
on the space level:

\begin{definition}
Let $I=\{ 1,\ldots ,n \}$ be the standard $n$-element set and fix a family
$\ttt := (t_J)_{J \subset I, J \not= \emptyset}$ of elements $t_J \in \K$.
In other words, $\ttt \in \K^{2^k-1}$.
If $J= \{ i_1, \ldots, i_k \} $, then instead of
$t_J$ we write also $t_{i_1,\ldots,i_k}$. 
We define the {\em (cubic) $\ttt$-extension $T^{(\ttt)}f$ of $f$} 
to be the partial map of $\hat T^n f$ where the scalar parameters have the fixed value
$\ttt$. Thus, 
for $n=1$, $T^{(t)} f(x,v)=(f(x),f^{[1]}(x,v,t))$ is the map introduced above, and 
for $n=2$ we get with $\ttt = (t_1,t_2,t_{1,2})$

\ssk
$
T^{(\ttt)}f(x,v_1,v_2,v_{1,2}) =
$
$$
\quad \quad \quad 
\Big (f(x),f^{[1]}(x,v_1,t_1),f^{[1]}(x,v_2,t_2),
f^{[2]}((x,v_1,t_1),(v_2,v_{1,2},t_{1,2}),t_2)  \Big).
$$
For general $\ttt$, $T^{(\ttt)} f$ is defined on an open set $T^{(\ttt)} U \subset V^{2^n}$ and 
takes  values in
$W^{2^n}$.
\end{definition}

By induction it follows immediately from the remarks concerning the case $n=1$
that $T^{(\ttt)}$ is a covariant functor preserving direct products.
For $\ttt = 0$, this is the higher order tangent functor denoted by $T^n$ in \cite{Be08}.

\subsection{Simplicial differential calculus}

We will write $k$-tuples of vectors or of scalars in the form
$\vvv :=(v_0,\ldots,v_k) \in V^{k+1}$,
$\sss :=(s_0,\ldots,s_k) \in \K^{k+1}$, and we will say that $\sss$ is {\em non-singular} if,
for $i \not= j$, $s_i - s_j$ is invertible.

\begin{definition}\label{simplDef}
For a map $f:U \to W$ and non-singular $\sss \in \K^{k+1}$,
 we define  {\em (generalized)
divided differences} by 
\begin{eqnarray*}
f^{>k<}(\vvv;\sss )& := &
\frac{f(v_0)}
{\prod_{j=1,\ldots,k}(s_0-s_j)}
+ 
\sum_{i=1}^k
\frac{f\bigl(v_0 + \sum_{j=1}^i \prod_{\ell=0}^{j-1} (s_i - s_\ell) v_j\bigr)}
{\prod_{j=0,\ldots,k \atop j \not= i}(s_i-s_j)} .
\end{eqnarray*}
For convenience, we spell  this formula out explicitly, as follows: $f^{>0<}(v_0;s_0) := f(v_0)$ and
\begin{eqnarray*}
f^{\rangle 1 \langle}(v_0,v_1;s_0,s_1) & = &
\frac{f(v_0)}{s_0-s_1} +
\frac{f(v_0+(s_1 - s_0)v_1)}{s_1-s_0} 
%= \frac{f(v_0)-f(v_0+(s_1 - s_0)v_1)}{s_0-s_1} ,
% = f^{[1]}(v_0,v_1,s_0-s_1),
\cr
f^{>2<}(v_0,v_1,v_2;s_0,s_1,s_2) & = &
\frac{f(v_0)}{(s_0-s_1)(s_0-s_2)} +
\frac{f(v_0+(s_1 - s_0)v_1)}{(s_1-s_0)(s_1-s_2)} + \cr
& & \quad \quad \quad \quad \quad \quad
\frac{f(v_0+(s_2 - s_0)v_1 + 
(s_2 - s_1)(s_2-s_0)v_2)}{(s_2-s_0)(s_2-s_1)}
\end{eqnarray*}
and
\begin{eqnarray*}
f^{>k<}(\vvv;\sss )& := &
\frac{f(v_0)}{(s_0-s_1)\cdot \ldots \cdot (s_0-s_k)} +
\frac{f\bigl(v_0+(s_1 - s_0)v_1\bigr)}{(s_1-s_0)(s_1-s_2) \cdot \ldots \cdot (s_1 - s_k)} + \cr
& & \quad %\quad \quad 
 \frac{f\bigl(v_0+(s_2 - s_0)v_1 + 
(s_2 - s_1)(s_2-s_0)v_2\bigr)}{(s_2-s_0)(s_2-s_1)(s_2 - s_3) \cdot \ldots \cdot (s_2 - s_k)}+ \ldots +  \cr
& & 
\frac{f \bigl( v_0 + (s_k - s_0) v_1 + \ldots + (s_k - s_{k-1})(s_k - s_{k-2}) \cdot \ldots \cdot (s_k - s_0) v_k 
\bigr)}
{(s_k-s_0)(s_k - s_1) \cdot \ldots \cdot (s_k - s_{k-1}) } 
\end{eqnarray*}
We say that $f$ {\em is of class $\CC^{\langle k \rangle}$}, or {\em $k$ times
continuously simplicially differentiable}, if $f^{\rangle \ell \langle}$ extends
continuously to singular values of $\sss$, for all $\ell =1,\ldots,k$. This means that  there are
continuous maps $f^{<\ell>}:U^{<\ell>} \to W$, where
$$
U^{<\ell>}:=\big\{ (\vvv,\sss) \in V^\ell \times \K^\ell | \, 
v_0 \in U, \quad \forall i=1,\ldots,\ell : \,
v_0+\sum_{j=1}^i \prod_{m=1}^j (s_i - s_m) v_j \in U \big\},
$$
such that, whenever $(\vvv,\sss) \in U^{<\ell>}$ and $\sss$ is non-singular,
$$
f^{\rangle \ell \langle}(\vvv;\sss) = f^{<\ell>}(\vvv;\sss).
$$
The map $f^{<\ell>}$ will be called the {\em extended
divided difference map}.
Note that, by density of $\K^\times$ in $\K$, the extension $f^{<\ell>}$
is unique (if it exists), and hence in particular the value
$f^{<\ell>}(\vvv ; {\bf 0})$,
%$$
%S_\ell f(v_0):V^{\ell-1} \to W, \quad (v_1,\ldots,v_\ell) \mapsto f^{<\ell>}(\vvv ; {\bf 0})
%$$
called the {\em $\ell$-th order simplicial differential},
is uniquely determined.
\end{definition}

One may observe that
$f^{\langle k \rangle}(\vvv;s_0,\ldots,s_k) = f^{\langle k \rangle}(\vvv;s_0 - t,\ldots,s_k-t)$ for all $t \in \K$ since
only differences of scalar values appear in the definition; in particular, we may choose $t=s_0$,
so that for many purposes one may assume that $s_0=0$. 
It is clear that $f$ is $\CC^{[1]}$ if and only if $f$ is $\CC^{\langle 1 \rangle}$, since
\begin{equation}\label{Efeins}
f^{[1]}(x,v,t)=f^{\langle 1 \rangle}(x,v;0,t), \quad \quad
f^{\langle 1 \rangle}(v_0,v_1;s,t)=f^{[1]}(v_0,v_1,t-s) .
\end{equation}
For $k>1$, it is less easy to compare both concepts. 
In order to attack this problem, we start by proving a recursion formula:

\begin{lemma}\label{RecursionLemma}
The following recursion formula holds: for non-singular $\sss$,
$$
f^{>k+1<}(v_0,\ldots,v_{k+1}; s_0,\ldots,s_{k+1})=
\frac{1}{s_k - s_{k+1}} \Bigl(
f^{>k<}(v_0,\ldots,v_k;s_0,\ldots,s_k) - \quad \quad { }
$$
$$
\quad \quad 
f^{>k<}\bigl(v_0,v_1,\ldots,v_{k-1},v_{k}+(s_{k+1}-s_k)v_{k+1};
s_1,\ldots,s_{k-1},s_{k+1} \bigr) \Bigr)
$$
\end{lemma}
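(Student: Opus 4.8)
The plan is to establish the identity by expanding both sides and matching the coefficient of $f$ at each evaluation point that occurs. For a non-singular tuple $\sss$ write the $i$-th argument of $f$ inside $f^{>k<}(\vvv;\sss)$ as
$$
\xi_i(\sss):=v_0+\sum_{j=1}^{i}\prod_{\ell=0}^{j-1}(s_i-s_\ell)\,v_j \qquad (\xi_0=v_0),
$$
so that $f^{>k<}(\vvv;\sss)=\sum_{i=0}^{k}f(\xi_i(\sss))\big/\prod_{j\ne i}(s_i-s_j)$ with the product over $j\in\{0,\dots,k\}\setminus\{i\}$. First I would record two elementary observations. Since for $i\le k$ the point $\xi_i$ involves neither $s_{k+1}$ nor $v_{k+1}$, the first $k+1$ evaluation points of $f^{>k+1<}(v_0,\dots,v_{k+1};s_0,\dots,s_{k+1})$ are exactly the points $\xi_0,\dots,\xi_k$ occurring in the first summand $f^{>k<}(v_0,\dots,v_k;s_0,\dots,s_k)$ on the right-hand side. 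Moreover, for $i\le k-1$ neither the replacement of $v_k$ by $v_k+(s_{k+1}-s_k)v_{k+1}$ nor that of $s_k$ by $s_{k+1}$ affects $\xi_i$, so $\xi_0,\dots,\xi_{k-1}$ also occur, unchanged, among the evaluation points of the second summand, i.e.\ of $f^{>k<}$ with space arguments $v_0,\dots,v_{k-1},\,v_k+(s_{k+1}-s_k)v_{k+1}$ and scalar arguments $s_0,\dots,s_{k-1},s_{k+1}$.

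The one genuinely computational point — and the step I expect to be the crux — is to identify the \emph{last} evaluation point of that second summand with the last evaluation point $\xi_{k+1}$ of $f^{>k+1<}$. Writing out the $i=k$ term of that $f^{>k<}$ gives the argument
$$
v_0+\sum_{j=1}^{k-1}\prod_{\ell=0}^{j-1}(s_{k+1}-s_\ell)\,v_j\;+\;\prod_{\ell=0}^{k-1}(s_{k+1}-s_\ell)\bigl(v_k+(s_{k+1}-s_k)v_{k+1}\bigr),
$$
and the telescoping $\prod_{\ell=0}^{k-1}(s_{k+1}-s_\ell)\cdot(s_{k+1}-s_k)=\prod_{\ell=0}^{k}(s_{k+1}-s_\ell)$ turns this into $v_0+\sum_{j=1}^{k+1}\prod_{\ell=0}^{j-1}(s_{k+1}-s_\ell)v_j=\xi_{k+1}(s_0,\dots,s_{k+1})$. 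This is precisely the computation that forces the affine combination $v_k+(s_{k+1}-s_k)v_{k+1}$, together with the scalar $s_{k+1}$ in the last slot, to appear in the statement, and it is the only place where the specific shape of the recursion matters rather than merely the symmetry of divided differences.

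It then remains to match coefficients. The point $\xi_k$ appears only in the first summand, with coefficient $\bigl(\prod_{j\le k-1}(s_k-s_j)\bigr)\inv$ there; dividing by $s_k-s_{k+1}$ gives $\bigl(\prod_{j\le k+1,\,j\ne k}(s_k-s_j)\bigr)\inv$, its coefficient in $f^{>k+1<}$. Similarly $\xi_{k+1}$ appears only in the second summand, with coefficient $\bigl(\prod_{j\le k-1}(s_{k+1}-s_j)\bigr)\inv$, and $-1/(s_k-s_{k+1})$ times this is $\bigl(\prod_{j\le k+1,\,j\ne k+1}(s_{k+1}-s_j)\bigr)\inv$. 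For $i\le k-1$ the point $\xi_i$ receives a contribution from both summands; factoring out the common $\bigl(\prod_{j\le k-1,\,j\ne i}(s_i-s_j)\bigr)\inv$, the remaining scalar is
$$
\frac{1}{s_k-s_{k+1}}\Bigl(\frac{1}{s_i-s_k}-\frac{1}{s_i-s_{k+1}}\Bigr)=\frac{1}{(s_i-s_k)(s_i-s_{k+1})},
$$
which is exactly what upgrades $\bigl(\prod_{j\le k-1,\,j\ne i}(s_i-s_j)\bigr)\inv$ to $\bigl(\prod_{j\le k+1,\,j\ne i}(s_i-s_j)\bigr)\inv$. Adding up the matched contributions over the $k+2$ evaluation points yields the asserted formula. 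Non-singularity of $\sss$ guarantees that every denominator appearing on the right-hand side — being in each case a product of differences $s_a-s_b$ with $a\ne b$, all of which occur already among the pairwise differences of $\sss$ — is invertible, so no division issue arises; beyond the node identification above I expect nothing but careful bookkeeping.
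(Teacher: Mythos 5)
Your proof is correct and follows essentially the same route as the paper's own: expand both sides, observe that the first $k$ evaluation points agree in the two $f^{>k<}$-terms, use the identity $\frac{1}{a-c}\bigl(\frac{1}{b-a}-\frac{1}{b-c}\bigr)=\frac{1}{(b-a)(b-c)}$ to combine their coefficients, and verify (via the telescoping product) that the two remaining terms reproduce the $k$-th and $(k+1)$-st terms of $f^{>k+1<}$. Note also that you have read the scalar tuple of the second summand as $(s_0,\ldots,s_{k-1},s_{k+1})$, tacitly supplying the $s_0$ omitted in the lemma's displayed formula, which is exactly the reading used in the paper's proof.
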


\begin{proof} We are going to compute the right-hand side term of this equation. To this end,
observe that, in the definition of $f^{>k<}(\vvv;\sss)$, the values of $f$ at $k+1$ (generically 
pairwise different) points occur, where the $j$-th point depends only on
$(v_1,\ldots,v_j;s_0,\ldots,s_j)$. 
Hence, for $j=0,\ldots,k-1$, these points of evaluation are the same for both terms
of which the difference is taken. 
Using the algebraic identity
$$
\frac{1}{a-c} \bigl( \frac{1}{b-a} - \frac{1}{b-c} \bigr)=
\frac{1}{(b-a)(b-c)} ,
$$
we get for the difference of two such terms
$$
\frac{1}{s_k - s_{k+1}} 
\Bigl(
\frac{ f\bigl( v_0 + (s_j - s_0) v_1 + \ldots + (s_j - s_{j-1}) \ldots (s_j - s_0) v_j \bigr)}
{(s_j - s_k) \prod_{i = 0,\ldots,k-1 \atop i \not= j} (s_j - s_i) }  \, - \quad \quad \quad \quad \quad \quad 
$$
\begin{eqnarray*}
 & &
\quad \quad \quad \quad \quad \quad 
\frac{ f\bigl( v_0 + (s_j - s_0) v_1 + \ldots + (s_j - s_{j-1}) \ldots (s_j - s_0) v_j \bigr)}
{(s_j - s_{k+1}) \prod_{i = 0,\ldots,k-1 \atop i \not= j} (s_j - s_i) }
\Bigr)
\cr
&=& \frac{1}{s_k - s_{k+1}} \bigl( \frac{1}{s_j - s_{k}} - \frac{1}{s_j - s_{k+1}} \bigr)
\frac{ f\bigl( v_0 + (s_j - s_0) v_1 + \ldots + (s_j - s_{j-1}) \ldots (s_j - s_0) v_j \bigr)}
{\prod_{i = 0,\ldots,k-1 \atop i \not= j}(s_j - s_i) }
 \cr
 &=&
 \frac{ f\bigl( v_0 + (s_j- s_0) v_1 + \ldots + (s_j - s_{j-1}) \ldots (s_j - s_0) v_j \bigr)}
{\prod_{i = 0,\ldots,k+1 \atop i \not= j}(s_j - s_i) }
\end{eqnarray*}
which is exactly the $j$-th term appearing in the definition of 
$f^{>k+1<}$.  
It remains to show that the difference of the $k$-th terms leads exactly to the last
two terms in the definition of 
$f^{>k+1<}$.  
Now, from
$$
 \frac{1}{s_k - s_{k+1}} 
 \frac{ f\bigl( v_0 + (s_k - s_0) v_1 + \ldots + (s_k - s_{k-1}) \ldots (s_k - s_0) v_k\bigr)}
{ \prod_{i = 0,\ldots,k-1} (s_k - s_i) }
$$
we readily obtain the $k$-th term, and from 
$$
- 
 \frac{1}{s_k - s_{k+1}} 
 \frac{ f\bigl( v_0 + (s_{k+1} - s_0) v_1 + \ldots + (s_{k+1} - s_{k-1}) \ldots (s_{k+1} - s_0)
 ( v_k + (s_{k+1} - s_k) v_{k+1} ) \bigr)}
{ \prod_{i = 0,\ldots,k-1} (s_{k+1} - s_i) }
$$
we get the last term.
\end{proof}

\begin{theorem}\label{cubictosimplicialTh}
If $f$ is of class $\CC^{[k]}$, then $f$ is of class $\CC^{\langle k \rangle}$. Moreover,
$f^{\langle j \rangle}$ (with $j=0,\ldots,k$)  is  then of class $\CC^{[k-j]}$, and
the following relation holds for all $\sss$: 
$$
- f^{\langle k \rangle}(\vvv;\sss) =
$$
$$
(f^{\langle k-1 \rangle})^{[1]}\Bigl( \bigl( v_0,\ldots,v_{k-1};s_0,\ldots,s_{k-1} \bigr),
\bigl( 0,\ldots,0,v_k;0,\ldots,0,1 \bigr),s_k - s_{k-1} \Bigr).
$$
The simplicial differential quotient maps can be embedded into the cubic ones in the
sense that there exist $\CC^{[\infty]}$- (in fact, affine continuous) 
maps $g_k: U^{\langle k \rangle} \to U^{[k]}$ such that
$$
f^{\langle k \rangle}(\vvv;\sss) = \pm  f^{[k]}(g_k(\vvv;\sss)) .
$$
For $k \in \{ 1,2,3 \}$ we have the following explicit formulae for these embeddings:
\begin{eqnarray*}
f^{\langle 1 \rangle}(\vvv;\sss) & = & f^{[1]}\bigl(v_0,v_1,s_1 - s_0 \bigr)
\cr
f^{<2>}(\vvv;\sss) &=& - f^{[2]}\bigl((v_0,v_1,s_1 - s_0 ),(0,v_2,1), s_2 - s_1\bigl)
\cr
f^{<3>}(\vvv;\sss) &=& f^{[3]}\Bigl( \bigl((v_0,v_1,s_1 - s_0 ),(0,v_2,1), s_2 - s_1\bigl),
\bigl( (0,0,0),(0,v_3,0),1 \bigr), s_3 - s_2 \Bigr)
\end{eqnarray*}
\end{theorem}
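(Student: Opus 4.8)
The plan is to argue by induction on $k$, carrying the full statement of the theorem as the inductive hypothesis and using the recursion formula of Lemma~\ref{RecursionLemma} as the main engine. The base case $k=1$ is already contained in formula~\eqref{Efeins}: there $\CC^{[1]}=\CC^{\langle 1\rangle}$, one has $f^{\langle 1\rangle}(\vvv;\sss)=f^{[1]}(v_0,v_1,s_1-s_0)$, so that $g_1(\vvv;\sss):=(v_0,v_1,s_1-s_0)$ is the required continuous affine map, and $f^{\langle 0\rangle}=f$ is $\CC^{[1]}$ by hypothesis.

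For the inductive step, assume the statement for $k$ and let $f$ be of class $\CC^{[k+1]}$. Since then $f\in\CC^{[k]}$, the hypothesis supplies continuous affine maps $g_j\colon U^{\langle j\rangle}\to U^{[j]}$ with $f^{\langle j\rangle}=\pm f^{[j]}\circ g_j$ for $1\le j\le k$. The first move is to upgrade the regularity of the lower divided differences: by the basic rules of the cubic calculus (\cite{BGN04}) the hypothesis $f\in\CC^{[k+1]}$ yields $f^{[j]}\in\CC^{[k+1-j]}$, and since the class $\CC^{[k+1-j]}$ is stable under composition with the $\CC^{[\infty]}$-maps $g_j$, one gets $f^{\langle j\rangle}\in\CC^{[k+1-j]}$ for $1\le j\le k$; in particular $f^{\langle k\rangle}\in\CC^{[1]}$, so that the difference quotient map $(f^{\langle k\rangle})^{[1]}$ is defined and continuous on all of the extended domain $(U^{\langle k\rangle})^{[1]}$.

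Next comes the relation and the continuous extension. Lemma~\ref{RecursionLemma} says that, on the non-singular locus, $f^{\langle k+1\rangle}(\vvv;\sss)$ equals $\tfrac{1}{s_k-s_{k+1}}$ times the difference of the value of $f^{\langle k\rangle}$ at $(v_0,\dots,v_k;s_0,\dots,s_k)$ and its value at the point obtained by replacing $v_k$ by $v_k+(s_{k+1}-s_k)v_{k+1}$ and $s_k$ by $s_{k+1}$; since the latter point is precisely the former plus $(s_{k+1}-s_k)$ times $H:=(0,\dots,0,v_{k+1};0,\dots,0,1)$, this is, up to sign, a single first-order difference quotient of $f^{\langle k\rangle}$:
\[
f^{\langle k+1\rangle}(\vvv;\sss)=\pm\,(f^{\langle k\rangle})^{[1]}\bigl((v_0,\dots,v_k;s_0,\dots,s_k),\,H,\,s_{k+1}-s_k\bigr)\qquad(\sss\ \text{non-singular}).
\]
The right-hand side is obtained from $(f^{\langle k\rangle})^{[1]}$ by precomposition with the affine reindexing map $A\colon(\vvv;\sss)\mapsto\bigl((v_0,\dots,v_k;s_0,\dots,s_k),H,s_{k+1}-s_k\bigr)$, which one checks directly carries $U^{\langle k+1\rangle}$ into $(U^{\langle k\rangle})^{[1]}$; hence it is continuous on all of $U^{\langle k+1\rangle}$ — in particular at $s_{k+1}=s_k$ and at configurations where some of $s_0,\dots,s_k$ coincide, these already being inside the extended domain of $f^{\langle k\rangle}$. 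By the usual density argument (non-singular $\sss$ being dense) this continuous map is then the unique extension of $f^{\langle k+1\rangle}$, so $f\in\CC^{\langle k+1\rangle}$ and the displayed identity is the asserted relation.

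Finally, the embedding $g_{k+1}$ is assembled, and the regularity of $f^{\langle k+1\rangle}$ is read off. Substituting $f^{\langle k\rangle}=\pm f^{[k]}\circ g_k$ into the relation and using the cubic chain rule in the form $(h\circ g)^{[1]}=h^{[1]}\circ\hat T g$ — together with the fact that $\hat T g_k$ is again affine because $g_k$ is — gives $(f^{\langle k\rangle})^{[1]}=\pm f^{[k+1]}\circ\hat T g_k$, hence $f^{\langle k+1\rangle}=\pm f^{[k+1]}\circ g_{k+1}$ with $g_{k+1}:=\hat T g_k\circ A$ a continuous affine map $U^{\langle k+1\rangle}\to U^{[k+1]}$; since $f^{[k+1]}$ is continuous, this also gives $f^{\langle k+1\rangle}\in\CC^{[0]}=\CC^{[(k+1)-(k+1)]}$, which completes the induction, and unwinding $g_1$ and $g_2$ in this recursion reproduces the explicit formulae for $k\le3$. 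The point that needs genuine care — and the one I expect to be the main obstacle — is this extension step: one must make sure that the purely algebraic identity of Lemma~\ref{RecursionLemma}, a priori valid only for non-singular $\sss$, really does present $f^{\langle k+1\rangle}$ as one first-order difference quotient of the \emph{already extended} map $f^{\langle k\rangle}$, so that all remaining singular directions are inherited from level $k$ at no extra cost, and that the domains and the signs match throughout; the rest is routine bookkeeping with the cubic chain rule and the definition of the classes $\CC^{[k]}$.
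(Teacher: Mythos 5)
Your proposal is correct and follows essentially the same route as the paper's proof: induction on $k$, rewriting the recursion of Lemma~\ref{RecursionLemma} as a single first-order cubic difference quotient of the already extended $f^{\langle k\rangle}$ in the fixed direction $(0,\dots,0,v_{k+1};0,\dots,0,1)$, using the inductive affine embedding $f^{\langle k\rangle}=\pm f^{[k]}\circ g_k$ together with $f\in\CC^{[k+1]}$ to get $f^{\langle k\rangle}\in\CC^{[1]}$, and then passing to all $\sss$ by density. Your explicit assembly $g_{k+1}=\hat T g_k\circ A$ is exactly what the paper leaves implicit; the only loose end is the sign, which the recursion fixes to be $-$ (so $(-1)^k$ overall in the embedding), as in the stated relation.
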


\begin{proof} All claims are proved by induction, the case $k=1$ being trivial thanks to
Equation (\ref{Efeins}).
We write the recursion formula from the preceding lemma as 
$$
f^{>k+1<}(v_0,\ldots,v_{k+1}; s_0,\ldots,s_{k+1})=
\frac{1}{s_k - s_{k+1}} \Bigl(
f^{>k<}(v_0,\ldots,v_k;s_0,\ldots,s_k) - \quad \quad { }
$$
$$
\quad \quad 
f^{>k<}\bigl(v_0,v_1,\ldots,v_{k-1},v_{k}+(s_{k+1}-s_k)v_{k+1};
s_1,\ldots,s_{k-1},s_k + (s_{k+1} - s_k) \bigr) \Bigr) 
$$
which has the form of a first order difference quotient,  equal to
$$
-(f^{\langle k \rangle})^{[1]}\Bigl( \bigl( v_0,\ldots,v_{k};s_0,\ldots,s_{k} \bigr),
\bigl( 0,\ldots,0,v_{k+1};0,\ldots,0,1 \bigr),s_{k+1} - s_k \Bigr)
$$
for non-singular $\sss$.
Now assume we have proved the claims of the theorem for order $k$, and let
$f$ be a map of class $\CC^{[k+1]}$.
By induction, $f^{\langle k \rangle}$ is thus of class $\CC^{[1]}$, and hence the right hand side term
from the formula extends to a continuous map of $(\vvv;\sss)$ on the extended domain.
Thus $f^{>k+1<}(\vvv;\sss)$ indeed admits a
continuous extension onto the extended domain, given by the right hand side term.
This proves that $f$ is $\CC^{<k+1>}$, and that the formula for
$f^{<k+1>}(\vvv;\sss)$ from the claim holds.
Note that, by density, this formula holds for all $\sss$ (including singular values).
Moreover, it shows that  $f^{<k+1>}$ is embedded into $f^{[k+1]}$ by a continuous affine map,
and hence all maps $f^{\langle j \rangle}$, being composition of $\CC^{[k+1-j]}$-maps, are again
$\CC^{[k+1-j]}$, by the chain rule.
\end{proof}

As mentioned in the introduction, we conjecture that 
the concepts $\CC^{[k]}$ and $\CC^{\langle k \rangle}$ are equivalent; however,
the proof of the converse of the statement from the theorem is likely to be considerably
more complicated.
For the purposes of the present work, this converse is not needed, as it is clearer and
more instructive to develop the $\CC^{\langle k \rangle}$-theory independently from the
$\CC^{[k]}$-theory, before comparing both approaches.
Thus,
in the following, we develop the basic simplicial theory.
First of all, it is clear that $f^{\langle 1 \rangle}(v_0,v_1;0,0) = df(v_0)v_1$ is the usual first differential.
We will explain now how higher coefficients like
$f^{<2>}(v_0,v_1,v_2;0,0,0)$ are related to second and higher differentials.

\begin{theorem} \label{LimitedTheorem}
Assume $f:U \to W$ is of class $\CC^{\langle k \rangle}$.
Then  the following
``limited expansions'' hold: for all $\sss$, 
\begin{eqnarray*}
f \bigl(v_0 + (s_1 - s_0)v_1) \bigr) & =
& f(v_0) + (s_1 - s_0) f^{\langle 1 \rangle} (v_0,v_1; s_0,s_1)
\cr
f \bigl(v_0 + (s_2 - s_0) (v_1 + (s_2 - s_1)v_2) \bigr)& =&
f(v_0) + (s_2 - s_0) f^{\langle 1 \rangle}(v_0,v_1;s_0,s_1)  + \cr
& & \quad 
(s_2 - s_1)(s_2-s_0) f^{<2>} (v_0,v_1,v_2;s_0,s_1,s_2)
\cr
& \vdots & 
\cr
f \bigl(v_0 + \sum_{j=1}^k \prod_{\ell=0}^{j-1} (s_k - s_\ell) v_j\bigr) & =&
f(v_0) + \sum_{j=1}^k \prod_{\ell=0}^{j-1} (s_k - s_\ell) f^{\langle j \rangle} (v_0,\ldots,v_j;s_0,\ldots,s_j) 
\end{eqnarray*}
In particular, choosing $s_0 = s_1 = \ldots = s_{k-1} = 0$ and $s_k = t$, we get
\begin{eqnarray*}
f(v_0 + t v_1 + t^2 v_2 + \ldots + t^k v_k)& = &
f(v_0) + t f^{\langle 1 \rangle}(v_0,v_1;0,0) + t^2 f^{<2>}(v_0,v_1,v_2;0,0,0) 
\cr
& & \quad \quad \quad \quad + \ldots +
t^k f^{\langle k \rangle}(\vvv;0,\ldots,t) ,
\end{eqnarray*}
which, for $v_2 = \ldots = v_k = 0$ and $v_1=:h$ gives the {\em radial Taylor expansion}
\begin{eqnarray*}
f(v_0 + t h )& = &
f(v_0) + t f^{\langle 1 \rangle}(v_0,h;0,0) + t^2 f^{<2>}(v_0,h,0;0,0,0) 
\cr
& & \quad \quad \quad \quad + \ldots +
t^k f^{\langle k \rangle}(v_0,h,\ldots,0;0,\ldots,t) .
\end{eqnarray*}
\end{theorem}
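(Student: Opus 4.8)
\emph{Overview.} The plan is to recognise each generalized divided difference $f^{\langle m \rangle}(v_0,\dots,v_m;s_0,\dots,s_m)$ as an \emph{ordinary} divided difference of one and the same $W$-valued curve, so that the asserted limited expansions become the classical Newton forward-difference interpolation formula; the passage from non-singular $\sss$ to arbitrary $\sss$ is then the density argument used everywhere in the paper. Concretely, fix $\vvv$ and $\sss$ and put
$$
\gamma(x):=v_0+\sum_{j=1}^{k}\Bigl(\prod_{\ell=0}^{j-1}(x-s_\ell)\Bigr)v_j ,
$$
a Newton-form polynomial curve in $V$ with nodes $s_0,\dots,s_{k-1}$. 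The factor $x-s_i$ occurs in $\prod_{\ell=0}^{j-1}(x-s_\ell)$ exactly for $j\ge i+1$, so evaluation at $x=s_i$ annihilates every term with $j>i$; hence $\gamma(s_i)=v_0+\sum_{j=1}^{i}\prod_{\ell=0}^{j-1}(s_i-s_\ell)v_j$, which is precisely the point at which $f$ is evaluated in the $i$-th summand of $f^{\langle m\rangle}$ for \emph{every} $m\ge i$. Comparing the definition of $f^{\langle m\rangle}(v_0,\dots,v_m;s_0,\dots,s_m)$ term by term with the definition of the ordinary divided difference recalled in the introduction, this gives, for all non-singular $\sss$ and all $m\le k$,
$$
f^{\langle m\rangle}(v_0,\dots,v_m;s_0,\dots,s_m)=\bigl[\,s_0,\dots,s_m\,;\,f\circ\gamma\,\bigr].
$$

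\emph{Reduction to Newton's formula.} Using once more $\gamma(s_m)=v_0+\sum_{j=1}^{m}\prod_{\ell=0}^{j-1}(s_m-s_\ell)v_j$ (terms with $j>m$ vanish at $x=s_m$), the $m$-th limited expansion is, under the identification above, exactly
$$
(f\circ\gamma)(s_m)=\sum_{j=0}^{m}\Bigl(\prod_{\ell=0}^{j-1}(s_m-s_\ell)\Bigr)\,\bigl[\,s_0,\dots,s_j\,;\,f\circ\gamma\,\bigr],
$$
i.e.\ Newton's forward-difference interpolation formula for $f\circ\gamma$ evaluated at the node $s_m$. I would prove this by the standard short induction on $m$, using only the divided-difference recursion $[s_0,\dots,s_j;g]=\bigl([s_1,\dots,s_j;g]-[s_0,\dots,s_{j-1};g]\bigr)/(s_j-s_0)$ (equivalently, the Newton-form recursion for the interpolating polynomial): these are purely algebraic identities valid over any commutative ring as soon as the occurring differences $s_i-s_j$ are invertible, which is precisely the non-singularity hypothesis. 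One could instead feed Lemma~\ref{RecursionLemma} into such an induction, but routing through the classical statement is cleaner. This establishes all the displayed identities for non-singular $\sss$.

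\emph{Density and specializations.} Since $f$ is of class $\CC^{\langle k\rangle}$, each $f^{\langle j\rangle}$ with $j\le m$ extends continuously to $f^{<j>}$ on $U^{<j>}$, and on the open set $U^{\langle m\rangle}$ both sides of the $m$-th identity (rewritten with $f^{<j>}$ in place of $f^{\langle j\rangle}$) are continuous functions of $(v_0,\dots,v_m;s_0,\dots,s_m)$ alone, by the vanishing observations above. They agree on the subset where $\sss$ is non-singular, which is dense in $U^{\langle m\rangle}$ (perturb the $s_i$ one coordinate at a time, using the density of $\K^\times$ in $\K$), hence they agree identically; this yields the limited expansion for all $\sss$. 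The two concluding displays are then immediate substitutions: taking $s_0=\dots=s_{k-1}=0$ and $s_k=t$ turns $\prod_{\ell=0}^{j-1}(s_k-s_\ell)$ into $t^{j}$ and $\gamma(t)$ into $v_0+tv_1+\dots+t^k v_k$, giving the first specialization (with $f^{\langle j\rangle}(v_0,\dots,v_j;0,\dots,0)$ for $j<k$, since those nodes all equal $0$), and a further specialization $v_1=:h$, $v_2=\dots=v_k=0$ gives the radial Taylor expansion.

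\emph{Main obstacle.} Everything after the first step is classical interpolation theory together with the paper's standard density boilerplate; the one point that demands care is the term-by-term identification in the first step, i.e.\ checking that the chain of evaluation points built into $f^{\langle m\rangle}$ is genuinely $\gamma(s_0),\dots,\gamma(s_m)$ for a \emph{single} curve $\gamma$, independent of $m$, and matching the ``missing'' denominator $\prod_{j=1}^m(s_0-s_j)$ of the leading $f(v_0)$-term with the $i=0$ factor $\prod_{j\ne 0}(s_0-s_j)$. Getting the index ranges in $\prod_{\ell=0}^{j-1}$ versus the domain condition defining $U^{\langle m\rangle}$ to line up is the only real bookkeeping.
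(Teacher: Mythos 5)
Your proposal is correct, but it is organized differently from the paper's proof, so a comparison is worthwhile. Your key idea is to package all the data into the single Newton-form curve $\gamma(x)=v_0+\sum_{j=1}^{k}\prod_{\ell=0}^{j-1}(x-s_\ell)\,v_j$, observe that $f^{\langle m\rangle}(v_0,\dots,v_m;s_0,\dots,s_m)=[s_0,\dots,s_m;f\circ\gamma]$ for every $m\le k$ (your vanishing argument for the terms with $j>i$ is exactly the observation the paper makes inside the proof of Lemma \ref{RecursionLemma}, that the $j$-th evaluation point depends only on $(v_1,\dots,v_j;s_0,\dots,s_j)$), and then quote the classical Newton forward-difference formula at the node $s_m$, finishing with one continuity/density pass from non-singular to arbitrary $\sss$ on $U^{\langle m\rangle}$. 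The paper instead proves the identities by a direct induction on $k$ already on the extended domain: it repeatedly applies the first-order relation $f(x+tv)=f(x)+t\,f^{[1]}(x,v,t)$ after the substitution $s_{k-1}\mapsto s_k$, $v_{k-1}\mapsto v_{k-1}+(s_k-s_{k-1})v_k$, and then invokes the recursion formula of Lemma \ref{RecursionLemma} in its continuously extended form (as established in Theorem \ref{cubictosimplicialTh}), so the passage to singular $\sss$ is absorbed into that extended recursion rather than postponed to a final density step. The algebraic core is the same induction (Newton's formula is proved by the divided-difference recursion), but your reduction makes the statement transparently a one-variable interpolation identity along $f\circ\gamma$, at the price of having to do the density bookkeeping yourself: you must check that the non-singular locus is dense in $U^{\langle m\rangle}$ (the same ``density of $\K^\times$'' boilerplate the paper uses for uniqueness of $f^{<\ell>}$ and in Theorem \ref{SimplicialChainRule}) and that membership in $U^{\langle m\rangle}$ puts each truncation $(v_0,\dots,v_j;s_0,\dots,s_j)$ into $U^{<j>}$ and the left-hand argument into $U$ — the index bookkeeping you rightly flag (the printed domain conditions in the paper contain index slips relative to the formula for the evaluation points, so the intended ranges $\ell=0,\dots,j-1$ are the ones to use). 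With those points spelled out, your argument is a complete and arguably more conceptual proof; it is also essentially the observation that, as a linear-algebra statement, the limited expansion is the relation $(\times^{m+1}f)\circ M_\sss=M_\sss\circ \SJ^{(\sss)}f$ which the paper exploits later in the proof of the chain rule.
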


\begin{proof} 
The claim is proved by induction. The computation can be seen as multivariable analog of the
proof of the generalized Taylor expansion from \cite{BGN04}, Th.\ 5.1, consisting of a repeated
application of the relation $f(x+tv)=f(x)+t f^{[1]}(x,v,t)$.
For $k=1$ we have
\begin{eqnarray*}
f(v_0 + (s_1 - s_0)v_1)& =& f(v_0) + (s_1 - s_0) f^{[1]}(v_0,v_1,s_1-s_0)
\cr
& =&
f(v_0) + (s_1 - s_0) f^{\langle 1 \rangle}(v_0,v_1;s_0,s_1).
\end{eqnarray*}
For $k=2$, replace in the preceding equation $s_1$ by $s_2$ and $v_1$ by $v_1+(s_2 - s_1)v_2$,
\begin{eqnarray*}
f \bigl(v_0 + (s_2 - s_0)( v_1 + (s_2 - s_1)v_2) \bigr)& =&
 f(v_0) + (s_2 - s_0) f^{\langle 1 \rangle}(v_0,v_1 + (s_2 - s_1)v_2;s_0,s_2)
\cr
& =&
f(v_0) + (s_2 - s_0) \bigl(  f^{\langle 1 \rangle}(v_0,v_1;s_0,s_1)+ 
\cr & &
\quad \quad \quad  (s_2-s_1) 
 f^{<2>}(v_0,v_1,v_2;s_0,s_1,s_2) \bigr)
\end{eqnarray*}
where  for the last equality 
we used the recursion formula  (in its form valid on the extended domain, given in
Theorem \ref{cubictosimplicialTh}).
For $k=3$, 
we replace again $s_2$ by $s_3$ and $v_2$ by $v_2 + (s_3 - s_2)v_3$,
and proceed in the same way, and so on. The remaining statements are immediate consequences.
\end{proof}

\begin{corollary}\label{DiffCor}
Assume $f:U \to W$ is of class $\CC^{[k]}$ and denote by
$$
f(x+th)=f(x) + ta_1(x,h) + t^2 a_2(x,h) + \ldots + t^k a_k(x,h) + t^k R_{k}(x,h,t)
$$
the radial Taylor expansion of $f$ at $x$ from \cite{BGN04}, Theorem 5.1.
Then this expansion coincides with the one given in the preceding theorem, that is,
$$
a_j(x,h)= f^{\langle j \rangle}( x,h,0,\ldots,0;0,\ldots,0 ).
$$
In particular, the maps $h \mapsto  f^{\langle j \rangle}( x,h,0,\ldots,0; 0,\ldots,0 )$ are polynomial.
If $2$ is invertible in $\K$, then 
$$
f^{<2>}(v_0,v_1,v_2;0, 0,0) = df(v_0) v_2 + \frac{1}{2} d^2 f(v_0)(v_1,v_1),
$$
and if $2$ and  $3$ are invertible in $\K$, then
$$
f^{<3>}(v_0,v_1,v_2,v_3 ; 0,0, 0,0) 
%=df(v_0) v_3 + \frac{1}{2} d^2 f(v_0)((v_1,v_2) + (v_2,v_1)) +  \frac{1}{6} d^3 f(v_0)(v_1,v_1,v_1)
= df(v_0) v_3 +  d^2 f(v_0)(v_1,v_2) +  \frac{1}{6} d^3 f(v_0)(v_1,v_1,v_1),
$$
and if $2,\ldots,k$ are invertible in $\K$, then $f^{\langle k \rangle}(\vvv;{\mathbf 0})$ is polynomial
in $\vvv$, and
$$
f^{\langle k \rangle}(v_0,v_1,0,\ldots,0; 0,\ldots,0) = \frac{1}{k!} d^k f(v_0) (v_1,\ldots,v_1) .
$$
 \end{corollary}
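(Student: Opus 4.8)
The plan is to deduce every assertion from two ingredients: the ``limited expansions'' of Theorem~\ref{LimitedTheorem}, and the radial Taylor expansion together with Schwarz's Lemma of \cite{BGN04} (both available here, since $f$ is of class $\CC^{[k]}$, hence of class $\CC^{\langle k\rangle}$ by Theorem~\ref{cubictosimplicialTh}, so that $f^{<\ell>}$ exists and is continuous for $\ell\le k$). The only algebra that really enters is the partial-fractions identity $\tfrac1{a-c}\bigl(\tfrac1{b-a}-\tfrac1{b-c}\bigr)=\tfrac1{(b-a)(b-c)}$ already used in Lemma~\ref{RecursionLemma}; everything else is organizing it.

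First I would establish the identity $a_j(x,h)=f^{\langle j\rangle}(x,h,0,\ldots,0;0,\ldots,0)$. Fixing $x\in U$ and $h\in V$, I would compare two expansions of the curve $t\mapsto f(x+th)$: on one hand, \cite{BGN04}, Theorem~5.1 gives $f(x+th)=f(x)+\sum_{j=1}^k t^j a_j(x,h)+t^k R_k(x,h,t)$ with $R_k$ continuous and $R_k(x,h,0)=0$; on the other, the radial case $v_1=h$, $v_2=\cdots=v_k=0$ of Theorem~\ref{LimitedTheorem} gives $f(x+th)=f(x)+\sum_{j=1}^{k-1}t^j f^{\langle j\rangle}(x,h,0,\ldots,0;0,\ldots,0)+t^k f^{\langle k\rangle}(x,h,0,\ldots,0;0,\ldots,0,t)$, whose last coefficient is continuous in $t$. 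Subtracting, dividing successively by $t$ over the dense set $\K^\times$, and letting $t\to0$ at each stage, one is forced to have $a_j(x,h)=f^{\langle j\rangle}(x,h,0,\ldots,0;0,\ldots,0)$ for $j=1,\ldots,k-1$, and comparison of the $t^k$-coefficients then gives the same for $j=k$. Polynomiality of $h\mapsto f^{\langle j\rangle}(x,h,0,\ldots,0;\mathbf 0)$, and, when $j!$ is invertible, the formula $a_j(x,h)=\tfrac1{j!}d^jf(x)(h,\ldots,h)$, are then exactly what \cite{BGN04} says about the $a_j$; in particular this yields the last displayed formula of the corollary.

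For the explicit formulae for $f^{\langle 2\rangle}$ and $f^{\langle 3\rangle}$ I would compute on non-singular scalar tuples and pass to the limit. For $k=2$ (so $2\in\K^\times$), evaluating $f^{\langle 2\rangle}$ along $\sss_r:=(0,r,2r)$ with $r\in\K^\times$, the definition reads $f^{\langle 2\rangle}(v_0,v_1,v_2;\sss_r)=\tfrac1{2r^2}f(v_0)-\tfrac1{r^2}f(v_0+rv_1)+\tfrac1{2r^2}f(v_0+2rv_1+2r^2v_2)$. Inserting the second-order radial Taylor expansions of \cite{BGN04} (with $a_1=df$, $a_2=\tfrac12 d^2f$, and $d^2f(v_0)$ bilinear), namely $f(v_0+rv_1)=f(v_0)+r\,df(v_0)v_1+\tfrac{r^2}{2}d^2f(v_0)(v_1,v_1)+r^2\rho_1(r)$ and $f(v_0+2rv_1+2r^2v_2)=f(v_0)+2r\,df(v_0)v_1+2r^2\bigl(df(v_0)v_2+d^2f(v_0)(v_1,v_1)\bigr)+r^2\rho_2(r)$ with $\rho_i(r)\to0$, the $r^{-2}$- and $r^{-1}$-terms cancel, and by continuity of $f^{<2>}$ the limit $r\to0$ is $f^{\langle 2\rangle}(v_0,v_1,v_2;\mathbf 0)=df(v_0)v_2+\tfrac12 d^2f(v_0)(v_1,v_1)$. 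For $k=3$ I would run the same computation along $\sss_r:=(0,r,2r,3r)$, inserting the third-order expansion and using Schwarz's Lemma to symmetrize the $d^2f$-term; this is longer but mechanical, and I would state the result and omit the details.

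For general $k$ with $2,\ldots,k$ invertible, the same device gives the polynomiality of $f^{\langle k\rangle}(\vvv;\mathbf 0)$: along $\sss_r:=(0,r,2r,\ldots,kr)$ one computes $f^{\langle k\rangle}(\vvv;\sss_r)=\tfrac{(-1)^k}{k!\,r^k}\sum_{i=0}^k(-1)^i\binom ki f\bigl(v_0+\sum_{j=1}^i r^j\tfrac{i!}{(i-j)!}v_j\bigr)$, and inserting the $k$-th order Taylor formula of \cite{BGN04} for each evaluation of $f$ turns the inner sum into a polynomial in $r$ --- with coefficients polynomial in $(v_1,\ldots,v_k)$ and built from $df(v_0),\ldots,d^kf(v_0)$ --- plus a remainder that is $o(r^k)$; since the limit $r\to0$ exists (continuity of $f^{<k>}$), the polynomial is divisible by $r^k$, so $f^{\langle k\rangle}(\vvv;\mathbf 0)$ equals $\tfrac{(-1)^k}{k!}$ times its $r^k$-coefficient and is therefore polynomial, while specializing $v_2=\cdots=v_k=0$ recovers the radial formula above. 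Alternatively one could invoke the embedding $f^{\langle k\rangle}(\vvv;\sss)=\pm f^{[k]}(g_k(\vvv;\sss))$ of Theorem~\ref{cubictosimplicialTh} together with the fact, noted after Definition~\ref{cubicDef}, that $f^{[k]}$ becomes a polynomial map once all its ``pure'' time-parameters vanish --- which is the case at $g_k(\vvv;\mathbf 0)$. The step I expect to be the real obstacle is this last one: controlling the Taylor remainders uniformly enough to pass to the limit and to see that precisely the $r^k$-term survives, and then identifying the resulting polynomial and reconciling its signs with the conventions of Theorems~\ref{cubictosimplicialTh} and~\ref{LimitedTheorem}.
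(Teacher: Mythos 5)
Your argument is correct, and it reaches the paper's conclusions by a partly different route. The first step (identifying $a_j(x,h)$ with $f^{\langle j\rangle}(x,h,0,\ldots,0;\mathbf 0)$ by comparing the two expansions of $t\mapsto f(x+th)$ and dividing out powers of $t$ over the dense set $\K^\times$) is exactly the uniqueness statement the paper invokes from \cite{BGN04}, Lemma 5.2, and the polynomiality of the $a_j$ and the identities $a_j=\frac1{j!}d^jf$ are cited from \cite{BGN04} in both cases. Where you diverge is the computation of $f^{<2>}(\vvv;\mathbf 0)$ and $f^{<3>}(\vvv;\mathbf 0)$: the paper substitutes $h=v_1+tv_2+t^2v_3$ into the radial Taylor expansion, expands by multilinearity and symmetry, and compares coefficients of $t$ with the limited expansion of Theorem \ref{LimitedTheorem} (which is already valid at singular $\sss$, so no limit has to be taken), whereas you evaluate the divided-difference formula directly at the equally spaced non-singular nodes $\sss_r=(0,r,2r,\ldots)$, insert the cubic Taylor expansions at each node, and let $r\to0$ using continuity of $f^{<k>}$. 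Your computation checks out (I verified the $k=2$ and $k=3$ cancellations, and your closed Newton-type formula $f^{\langle k\rangle}(\vvv;\sss_r)=\frac{(-1)^k}{k!\,r^k}\sum_i(-1)^i\binom ki f(\cdot)$ is right), and the ``obstacle'' you worry about is harmless: the remainders in \cite{BGN04} are jointly continuous and vanish at $t=0$, and the coefficient-identification is the same divide-by-$r$ induction as in your first step. What your route buys is a very explicit finite-difference computation that makes transparent why invertibility of $2,\ldots,k$ is needed (non-singularity of the nodes); what the paper's route buys is that it handles all coefficients simultaneously, requires no choice of nodes and no limiting argument beyond what Theorem \ref{LimitedTheorem} already provides, and exhibits $f^{\langle j\rangle}(\vvv;\mathbf 0)$ directly as a coefficient in a $t$-expansion, which is how the general polynomial formula is read off. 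One small caveat: your alternative aside, appealing to a statement ``noted after Definition \ref{cubicDef}'' that $f^{[k]}$ is polynomial once the time parameters vanish, is not actually asserted there (the remark there concerns locality, not polynomiality), so you should either drop it or source it properly; since it is only an alternative, this does not affect the validity of your proof.
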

 
\begin{proof}
The first claim follows from uniqueness of the radial Taylor expansion (see \cite{BGN04}, Lemma  5.2).
It has been shown in \cite{BGN04}, Theorem 5.6, that the coefficients $a_j(x,h)$ are polynomial
mappings in $h$, hence $ f^{\langle j \rangle}( x,h,0,\ldots,0; 0,\ldots,0 )$ is polynomial in $h$
Note that, as shown in \cite{BGN04}, 
$a_2(x,h)=\frac{1}{2} d^2 f(x)(h,h)$ and
$a_3(x,h)=\frac{1}{6} d^2 f(x)(h,h,h)$ (if $2$, resp.\ $6$, are invertible).
For the remaining statements, 
we use again the radial Taylor expansion for $f(x+th)$ and let
$h=v_1 + t v_2 + t^2 v_3$,
$$
f(v_0 + t (v_1 + t v_2 + t^2 v_3)) =
f(v_0) + t df(v_0) h
+ \frac{t^2}{2} d^2 f(v_0) (h,h) + \frac{t^3}{6} d^3f(v_0)(h,h,h) + t^3 R_3,
$$
use multilinearity and symmetry of $d^2 f(v_0)$ and of $d^3 f(v_0)$ and compare terms
according to powers of $t$ with the limited expansion from the theorem; 
uniqueness of these terms leads to the two equalities concerning $f^{<2>}$ and $f^{<3>}$.
Clearly, this procedure can be applied at any order, leading to an explicit and polynomial
formula for $f^{\langle k \rangle}(\vvv;{\mathbf 0})$ (we leave it to the reader to work out the explicit 
combinatorial formula involving all higher differentials
$d^jf(v_0)$, $j=1,\ldots,k$; it  has the same structure as the formula for the highest 
component in $J^k f(v_0)$ given in \cite{Be08}, Theorem 8.6).
\end{proof}

As mentioned above, we conjecture that the converse of Theorem \ref{cubictosimplicialTh} 
holds.
 It should be a major step towards the proof of the conjecture
to prove, if $f$ is assumed $\CC^{\langle k \rangle}$, that $f^{\langle k \rangle}(\vvv;{\mathbf 0})$
is always polynomial in $\vvv$.  In this context,  note that
from the expression of $f^{<3>}$, we should indeed be able to recover the 
second differential $d^2 f(v_0)$, without a factor $\frac{1}{2}$, and then one has to
prove that this expression is indeed bilinear;  similarly for higher
order differentials.
 
 %%%%%%%
 \begin{comment}
[rk: the interesting point is to isolate the multilinear term without division by factorials,
like the $d^2 f(v_0)(v_1,v_2)$ above:
cf. Be08, expression for $J^k f$, highest term.
is a sum of terms of the form
$ \frac{1}{j!} d^j f(v_0)(v_{i_1},\ldots,v_{i_j})$ with
$i_1 + \ldots + i_j = j + k - j = k$. In particular, for
$ k = \frac{j(j+1)}{2}$, the term
$$
\frac{1}{j!} d^j f(v_0) (v_1,v_2,\ldots,v_j)
$$
appears in the sum, together with all of its permutations. Again by Schwarz lemma this
means that the multilinear ``cubic" differential $d^j f(v_0)$ indeed is produced from our
new formalism. Therefore it should be possible to recover the whole ``cubic" calculus
from the simplicial one (for $C^\infty$; for $C^k$ there will be loss in $k$). 
This then would provide a way to obtain the higher differentials only using simplicial
calculus, and hence open a way to prove that simplicial implies cubic smoothness.
It also corresponds to new ``non-trivial identities'' for the higher $f^{[k]}$, exactly as
the one from \cite{BGN04}, stated after Th. 5.4]
\end{comment}
%%%%%%%%%%%%

\begin{definition}\label{SJDef}
Assume that $f:U \to W$ is of class $\CC^{\langle k \rangle}$, and let $v_0 \in U$. 
For any $\sss \in \K^{k+1}$ we define the {\em simplicial $\sss$-extension of $f$} by
$$
\SJ^{(\sss)} f:  \SJ^{(\sss)} U \to W^{k+1}, \quad
\vvv \mapsto \begin{pmatrix}
f(v_0) \cr
f^{\langle 1 \rangle}(v_0,v_1;s_0,s_1) \cr
\vdots \cr
f^{\langle k \rangle}(v_0,\ldots,v_k;s_0,\ldots,s_k)\cr
\end{pmatrix} 
$$
where
$$
\SJ^{(\sss)} U := \big\{ \vvv \in V^{k+1} | \,
v_0 \in U, \quad \forall i=1,\ldots,k : \,
v_0+\sum_{j=1}^i \prod_{\ell=1}^j (s_i - s_\ell) v_j \in U \big\} 
$$
(this set is open in $V^{k+1}$).
For $\sss = (0,\ldots,0)$, the map 
$$
\SJ^k f:= \SJ^{(0,\ldots,0)} f : \, U \times V^k \to W^{k+1}
$$ 
is called the 
{\em simplicial $k$-jet of $f$}.
%$$
%\SJ^k f : U \times V^k \to W^{k+1}, \quad \vvv \mapsto (f(v_0),\SJ^k f(v_0)).
%$$
\end{definition}

\begin{theorem}\label{SimplicialChainRule}
{\rm (Chain rule)} 
The simplicial $\sss$-extension is a covariant functor:
if $f:U \to W$ and $g:U' \to W'$ are of
class $\CC^{\langle k \rangle}$ and such that $f(U) \subset U'$, then
$g \circ f$ is of class $\CC^{\langle k \rangle}$ and, 
for all $\sss \in \K^{k+1}$,
$$
\SJ^{(\sss)} (g \circ f)=  \SJ^{(\sss)} (g) \circ  \SJ^{(\sss)} (f) .
$$
The identity map $\id_U$ is of class $\CC^{\langle k \rangle}$ and satisfies
$\SJ^{(\sss)}  (\id_U) = \id_{ \SJ^{(\sss)} (U)}$.
In particular, for $\sss = 0$, we see that the simplicial $k$-jet defines a covariant functor.
\end{theorem}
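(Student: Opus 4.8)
The plan is to establish the chain rule $\SJ^{(\sss)}(g\circ f) = \SJ^{(\sss)}(g)\circ\SJ^{(\sss)}(f)$ first for non-singular $\sss$, where it becomes a purely algebraic identity among the generalized divided differences, and then to extend it to all $\sss$ by density of $\K^\times$ (together with the $\CC^{\langle k\rangle}$-hypothesis, which guarantees the relevant continuous extensions exist). The point of working with non-singular $\sss$ is that there the divided differences are honest finite sums of values of $f$, and the evaluation points have a clean recursive description: if we set $w_i := v_0 + \sum_{j=1}^i \prod_{\ell=0}^{j-1}(s_i-s_\ell)\,v_j$ for $i=0,\ldots,k$ (so $w_0 = v_0$), then $f^{\langle i\rangle}(\vvv;\sss)$ involves only $f(w_0),\ldots,f(w_i)$, and in fact $f^{\langle i\rangle}(\vvv;\sss) = \sum_{j=0}^i f(w_j)/\prod_{m\neq j,\,m\le i}(s_j - s_m)$. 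The crucial combinatorial observation is that the $i$-th component of $\SJ^{(\sss)}U$, viewed as a point of $V$, depends only on $v_0,\ldots,v_i$ and that the map $(v_0,\ldots,v_i)\mapsto w_i$ is precisely the construction that makes $\vvv\mapsto(w_0,\ldots,w_k)$ the "Newton interpolation" reparametrization.

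The key steps, in order, are as follows. First, I would record the Newton-interpolation interpretation: for non-singular $\sss$ the scalars $s_0,\ldots,s_k$ are pairwise distinct, and $f^{\langle k\rangle}(\vvv;\sss)$ is the leading coefficient of the unique polynomial of degree $\le k$ interpolating $f$ at the nodes $(s_j, f(w_j))$; equivalently $\SJ^{(\sss)}f(\vvv)$ is the coefficient vector (in the Newton basis at nodes $s_0,\ldots,s_k$) of that interpolating polynomial. Second, I would check that $\vvv\mapsto(w_0,\ldots,w_k)$ is a bijection of $V^{k+1}$ (it is "triangular affine": $w_i$ is $v_0$ plus an invertible scalar multiple of $v_i$ plus lower-order terms), so that $\SJ^{(\sss)}U = \{\vvv : w_i(\vvv)\in U \text{ for all } i\}$ is exactly the preimage of $U^{k+1}$, hence open, and that the node-data of $\SJ^{(\sss)}(g\circ f)(\vvv)$ is $(s_j,(g\circ f)(w_j)) = (s_j,\, g(f(w_j)))$. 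Third, the actual identity: the Newton coefficients of the composite interpolation polynomial are obtained by applying $g$'s interpolation construction to the nodes $(s_j, f(w_j))$ — but that is exactly $\SJ^{(\sss)}g$ evaluated at the vector whose $w$-data is $(f(w_0),\ldots,f(w_k))$, and that vector is $\SJ^{(\sss)}f(\vvv)$ by the same Newton identification. This gives the equality of all $k+1$ components for non-singular $\sss$. Fourth, density: both sides of $\SJ^{(\sss)}(g\circ f)=\SJ^{(\sss)}g\circ\SJ^{(\sss)}f$ are, by the $\CC^{\langle k\rangle}$-assumption, continuous in $(\vvv,\sss)$ on the extended domain (one also needs to note here that $g\circ f$ is itself $\CC^{\langle k\rangle}$ — which follows once the formula is established, since the right-hand side is a composite of continuous maps, giving the required continuous extension); as they agree on the dense set of non-singular $\sss$, they agree everywhere. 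Finally, $\SJ^{(\sss)}(\id_U)=\id_{\SJ^{(\sss)}U}$ is immediate from $\id^{\langle 0\rangle}=\id$ and $\id^{\langle i\rangle}(\vvv;\sss)=v_i$ (for the identity map, $w_i - w_{i-1}$ unwinds the Newton recursion to recover exactly $v_i$), which one checks directly from the definition for non-singular $\sss$ and extends by density.

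I expect the main obstacle to be step three — verifying cleanly that the two nested interpolation/divided-difference constructions compose correctly, i.e. that "Newton coefficients of $g$ applied to the Newton data of $f$" really reproduces the Newton coefficients of $g\circ f$. The subtlety is that $\SJ^{(\sss)}g$ is defined as a function of \emph{space} variables $\vvv'=(v'_0,\ldots,v'_k)$, not directly of node values, so one must carefully trace that feeding it $\vvv' = \SJ^{(\sss)}f(\vvv)$ produces, via the $w$-construction for $g$, exactly the points $f(w_0),\ldots,f(w_k)$ — this is the content of the identity $w_i\bigl(\SJ^{(\sss)}f(\vvv)\bigr) = f\bigl(w_i(\vvv)\bigr)$, which is itself a restatement of the "limited expansion" of Theorem \ref{LimitedTheorem} applied componentwise. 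Once that bookkeeping identity is in hand, the rest is the elementary linear algebra of divided differences and the density argument, both routine.
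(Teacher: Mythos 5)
Your proposal is correct and follows essentially the same route as the paper: your ``Newton reparametrization'' $\vvv\mapsto(w_0,\ldots,w_k)$ is exactly the paper's triangular operator $M_\sss$, your bookkeeping identity $w_i(\SJ^{(\sss)}f(\vvv))=f(w_i(\vvv))$ is the paper's restatement of the limited expansion as $(\times^{k+1}f)\circ M_\sss=M_\sss\circ\SJ^{(\sss)}f$, and functoriality then follows by conjugation to the direct-product functor for non-singular $\sss$, extended by density and continuity (with the same treatment of $\id$ and of the $\CC^{\langle k\rangle}$-property of $g\circ f$).
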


\begin{proof}
First, we prove the claims for non-singular $\sss$, i.e.,
 $s_i - s_j \in \K^\times$; by density of $\K^\times$ in $\K$ and by continuity
 of terms on both sides of the equation, it will then hold for all $\sss$.
 
By induction, we show that
$\id^{\langle k \rangle}(\vvv;\sss)=v_k$. 
Indeed, for $k=0$ and $k=1$ it follows directly from the definitions,
and using  the recursion formula (Lemma \ref{RecursionLemma})
$$
\id^{<k+1>}(\vvv;\sss)=
\frac{v_k - (v_k + (s_{k+1} - s_k) v_{k+1})}{s_k-s_{k+1}} =v_{k+1} ,
$$
whence $\SJ^{(\sss)}  (\id_U) = \id_{ \SJ^{(\sss)} (U)}$.
 
Next we  show that, for non-singular $\sss$, $\SJ^{(\sss)} f$
is (linearly) conjugate to the direct product
$\times^{k+1} f: \times^{k+1}U \to \times^{k+1}W$.
In order to prove this, define the linear operator
\begin{equation} \label{M-matrix}
M_\sss : V^{k+1} \to W^{k+1}, \quad
\vvv \mapsto \bigl( v_0 + (s_i - s_0)v_1 + \ldots + \prod_{j=0}^i (s_i -s_j) v_i \bigr)_{i=0,\ldots,k}
\end{equation}
which may be identified with the  invertible lower
triangular $(k+1) \times (k+1)$-matrix
\begin{equation} \label{M-matrix'}
M_\sss = \begin{pmatrix}
1 &          &  &    &  \cr
1 & s_1 - s_0 &  &    &   \cr
1 & s_2 - s_0 & (s_2 - s_1)(s_2-s_0) &  & \cr
  &            &                     & & \cr
1 & s_k - s_0 & (s_k - s_1)(s_k-s_0)& \ldots & \prod_{i<k} (s_k - s_i) \cr
\end{pmatrix}
\end{equation}
Using this notation, the ``limited expansion'' from the
preceding theorem can be restated as follows:
for $i=0,\ldots,k$,
$$
f\bigl( (M_\sss \vvv)_i \bigr) = \bigl( M_\sss (\SJ^{(\sss)}f(\vvv)) \bigr)_i,
$$
that is, 
\begin{equation}
(\times^{k+1} f) \circ M_\sss  =   M_\sss \circ \SJ^{(\sss)} f  
\end{equation}
where $\times^{k+1} f: U^{k+1} \to W^{k+1}$ is simply the $k+1$-fold direct product of
$f$ with itself. 
The operator $M_\sss$ is invertible (since so is its ``matrix'');
let $N_\sss := (M_\sss)\inv$, so that
\begin{equation}\label{simplconj}
N_\sss \circ (\times^{k+1} f) \circ M_\sss = \SJ^{(\sss)} f . 
\end{equation}
In other words, the operator $\SJ^{(\sss)}$ is linearly conjugate to the direct product
functor and hence is itself a (covariant) functor:
for $f$ and $g$ as in the theorem, 
\begin{eqnarray*}
 \SJ^{(\sss)}(g \circ f) & = & N_\sss \circ (\times^{k+1} (g \circ f)) \circ M_\sss 
\cr
&=& N_\sss \circ (\times^{k+1} g) \circ (\times^{k+1} f) \circ M_\sss 
\cr
&=& N_\sss \circ (\times^{k+1} g) \circ M_\sss \circ  N_\sss \circ  (\times^{k+1} f) \circ M_\sss 
\cr
&=& 
\SJ^{(\sss)} g \circ \SJ^{(\sss)} f  \, .
\end{eqnarray*}
As explained above, by continuity and density the result follows for all $\sss$.
\end{proof}

One should regard $M_\sss$ as a ``change of variables'', which is bijective as long as
$\sss$ is non-singular, and then serves to ``trivialize'' the whole situation.
However, as soon as $\sss$ becomes singular, the change of variables is no longer
bijective, leading to the non-trivial structure of differential calculus. 
Nevertheless, certain features of the ``trivial'' situation survive, among them functoriality.
The promised ``explicit formula'' for $N_\sss$ with non-singular $\sss$ can be derived
easily from the explicit formula of the simplicial difference quotients: it is the linear map
$$
N_\sss:W^{k+1} \to V^{k+1}, \quad {\bf w} \mapsto
(N_\sss {\bf w})_i = 
\frac{w_0}
{\prod_{m=1,\ldots,k}(s_0-s_m)}
+ 
\sum_{j=1}^i
\frac{w_j}
{\prod_{m=0,\ldots,j \atop m \not= j}(s_j-s_m)} 
$$
which can be identified with a lower triangular matrix of the type
$$
N =
\begin{pmatrix}
1 &  & & & \cr
( s_0 - s_1)^{-1} &( s_1 - s_0)^{-1} & & & \cr
 ((s_0 - s_1)(s_0-s_2))^{-1}& ((s_1 - s_2)(s_1-s_0))^{-1}&
 ((s_2 - s_1)(s_2-s_0))^{-1} & & \cr
\ldots & & \ldots  & & \ddots  \cr
\end{pmatrix}
$$
Of course, one may check by a direct computation that the inverse matrix of $M_\sss$
is indeed given by such a formula.
Finally, we point out that, in the situation of Corollary \ref{DiffCor}, for $\sss = {\mathbf 0}$,
 the Chain Rule can be written out explicitly and then corresponds to the 
  ``F\'aa di Bruno formula'' (cf.\ \cite{Be08}, 8.7).
 
 \begin{corollary}\label{LimitedCorollary}
 A map $f:U \to W$ is of class $\CC^{\langle k \rangle}$ if and only if, for $j=1,\ldots,k$, there exist continuous
 maps $f^{\langle j \rangle}$ such that the ``limited expansion'' from Theorem \ref{LimitedTheorem}
 holds on the extended domain.
 \end{corollary}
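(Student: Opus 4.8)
The plan is to prove the two implications of Corollary \ref{LimitedCorollary} separately.

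\emph{Necessity.} If $f$ is of class $\CC^{\langle k \rangle}$, then by definition the maps $f^{\langle j \rangle}$ extend continuously to $U^{<j>}$ for $j=1,\ldots,k$, and Theorem \ref{LimitedTheorem} asserts precisely that the limited expansions hold for all $\sss$, in particular on the extended (singular) domain. So this direction is an immediate restatement of the theorem and requires no work.

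\emph{Sufficiency.} Suppose that for $j=1,\ldots,k$ there exist continuous maps $f^{\langle j \rangle}$ defined on the respective extended domains for which the limited expansions of Theorem \ref{LimitedTheorem} hold identically. The goal is to show that these $f^{\langle j \rangle}$ agree with the generalized divided differences $f^{>j<}$ whenever $\sss$ is non-singular; once that is known, the continuous extendability that defines $\CC^{\langle k \rangle}$ is exactly the existence of the assumed maps, so we are done. I would argue by induction on $j$. For $j=1$ the first limited expansion reads $f(v_0+(s_1-s_0)v_1)=f(v_0)+(s_1-s_0)f^{\langle 1 \rangle}(v_0,v_1;s_0,s_1)$; when $s_1-s_0$ is invertible one may solve for $f^{\langle 1 \rangle}$ and obtain exactly the defining formula $f^{>1<}(v_0,v_1;s_0,s_1)=\frac{f(v_0)}{s_0-s_1}+\frac{f(v_0+(s_1-s_0)v_1)}{s_1-s_0}$. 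For the inductive step, the $j$-th limited expansion expresses $f\bigl(v_0+\sum_{\ell=1}^j \prod_{m=0}^{\ell-1}(s_j-s_m)v_\ell\bigr)$ as $f(v_0)+\sum_{\ell=1}^{j}\prod_{m=0}^{\ell-1}(s_j-s_m)\,f^{\langle \ell \rangle}(v_0,\ldots,v_\ell;s_0,\ldots,s_\ell)$; subtracting the terms with $\ell<j$, which by the induction hypothesis coincide with the corresponding $f^{>\ell<}$, and dividing by the invertible factor $\prod_{m=0}^{j-1}(s_j-s_m)$, isolates $f^{\langle j \rangle}$ as a fixed universal expression in the values of $f$ and the lower-order divided differences. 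A direct computation (or, more cleanly, comparison with the recursion of Lemma \ref{RecursionLemma}, which is itself derived from the same limited-expansion bookkeeping) identifies this expression with $f^{>j<}(v_0,\ldots,v_j;s_0,\ldots,s_j)$. Hence on the non-singular locus $f^{\langle j \rangle}=f^{>j<}$, and the assumed continuity of $f^{\langle j \rangle}$ provides exactly the continuous extension required by Definition \ref{simplDef}.

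The one point deserving care — and the main (mild) obstacle — is the uniqueness underlying the inductive step: a priori the hypothesis only gives \emph{some} continuous maps satisfying the limited expansions, and one must check that for non-singular $\sss$ the expansions force each $f^{\langle j \rangle}$ to equal the explicit divided-difference formula, i.e.\ that the lower-triangular change of variables $M_\sss$ from \eqref{M-matrix'} is invertible so that the system of $k+1$ scalar equations $f\bigl((M_\sss\vvv)_i\bigr)=\bigl(M_\sss(\SJ^{(\sss)}f(\vvv))\bigr)_i$ uniquely determines $\SJ^{(\sss)}f(\vvv)=N_\sss\circ(\times^{k+1}f)\circ M_\sss(\vvv)$. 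Since $M_\sss$ is invertible exactly when $\sss$ is non-singular, this is automatic, and invoking the explicit form of $N_\sss=(M_\sss)\inv$ given after Theorem \ref{SimplicialChainRule} makes the identification $f^{\langle j \rangle}=f^{>j<}$ transparent. Then density of $\K^\times$ in $\K$, together with the assumed continuity, upgrades the equality and the continuous extension to all of $U^{<j>}$, completing the proof.
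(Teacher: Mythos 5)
Your proof is correct and follows essentially the same route as the paper: the paper's own argument likewise observes that for non-singular $\sss$ the limited expansions read $(\times^{k+1}f)\circ M_\sss = M_\sss\circ \SJ^{(\sss)}f$, so invertibility of $M_\sss$ forces the assumed maps to be $N_\sss\circ(\times^{k+1}f)\circ M_\sss$, i.e.\ the divided differences, and their assumed continuity is exactly the extension required by Definition \ref{simplDef}. Your step-by-step triangular elimination by induction on $j$ is just an unwound version of applying $N_\sss=(M_\sss)\inv$, so no essential difference.
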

 
 \begin{proof}
 One direction has been proved in Theorem \ref{LimitedTheorem}.
 As to the converse, the arguments given above show that the maps
 $f^{\langle j \rangle}$ are necessarily given by Equation (\ref{simplconj}), whence they
 indeed are continuous extensions of the simplicial difference quotient maps.
 \end{proof}
 
 Finally, in the next chapter we will need that the simplicial $\sss$-extension functors commute with direct products:
 
 \begin{lemma}
If $f$ and $g$ are of class
 $\CC^{\langle k \rangle}$, then so is $g \times f$, and
$$
\SJ^{(\sss)} (g \times f)=  \SJ^{(\sss)} (g) \times  \SJ^{(\sss)} (f) .
$$
It follows that $\SJ^{(\sss)}$ also is compatible with diagonal imbeddings
$\Delta:x \mapsto (x,x)$.
\end{lemma}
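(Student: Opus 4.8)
The plan is to reduce the assertion to the case of non-singular $\sss$, where formula (\ref{simplconj}) from the proof of Theorem \ref{SimplicialChainRule} makes it essentially a triviality, and then to extend to all $\sss$ by density of $\K^\times$ and continuity, exactly as in that proof.

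First I would fix a non-singular $\sss$ and use the canonical linear isomorphism $(V \times V')^{k+1} \cong V^{k+1} \times V'^{k+1}$ which rearranges $\bigl((v_0,v_0'),\ldots,(v_k,v_k')\bigr)$ into $\bigl((v_0,\ldots,v_k),(v_0',\ldots,v_k')\bigr)$. Under this identification the operator $M_\sss$ of (\ref{M-matrix}) attached to $V \times V'$ becomes the direct product of the operators $M_\sss$ attached to $V$ and to $V'$ (the matrix (\ref{M-matrix'}) acts on each factor separately), and likewise $N_\sss = M_\sss^{-1}$; moreover $\times^{k+1}(g \times f)$ corresponds to $(\times^{k+1} g) \times (\times^{k+1} f)$. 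Substituting into (\ref{simplconj}) and using that conjugation distributes over direct products gives, for non-singular $\sss$,
\[
\SJ^{(\sss)}(g \times f) = \SJ^{(\sss)}(g) \times \SJ^{(\sss)}(f),
\]
equivalently, componentwise, $(g\times f)^{\langle j\rangle}\bigl((v_0,v_0'),\ldots;\sss\bigr) = \bigl(g^{\langle j\rangle}(\vvv;\sss),\, f^{\langle j\rangle}(\vvv';\sss)\bigr)$ for $j=0,\ldots,k$. One may also read this off directly from Definition \ref{simplDef}: the evaluation points $(v_0,v_0') + \sum_j \prod_\ell (s_i - s_\ell)(v_j,v_j')$ split into pairs, $g\times f$ acts on them componentwise, and the scalar denominators are the same on both sides.

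Second, I invoke the hypothesis. Since $f$ and $g$ are $\CC^{\langle k \rangle}$, the maps $f^{\langle j\rangle}$ and $g^{\langle j\rangle}$ admit continuous extensions $f^{<j>}$, $g^{<j>}$; their direct product is a continuous map agreeing with $(g\times f)^{\langle j\rangle}$ on the dense non-singular locus, hence it is the required continuous extension. Thus $g\times f$ is of class $\CC^{\langle k\rangle}$ and the displayed identity persists for all $\sss$. One also checks that the domain $\SJ^{(\sss)}(U\times U')$ of Definition \ref{SJDef} corresponds, under the same identification, precisely to $\SJ^{(\sss)}U \times \SJ^{(\sss)}U'$, because its defining condition is imposed separately on the two blocks of components.

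Finally, for the diagonal $\Delta_U : x \mapsto (x,x)$: it is continuous and linear, so the recursion formula of Lemma \ref{RecursionLemma} gives, exactly as for $\id_U$ in the proof of Theorem \ref{SimplicialChainRule}, that $\Delta^{\langle j\rangle}(\vvv;\sss) = \Delta(v_j) = (v_j,v_j)$; hence $\SJ^{(\sss)}(\Delta_U)(\vvv) = \bigl((v_0,v_0),\ldots,(v_k,v_k)\bigr)$, which under the identification above is $\Delta_{\SJ^{(\sss)}U}(\vvv)$. Equivalently, this is the formal fact that a product-preserving functor fixing identity morphisms preserves diagonal morphisms. All computations here are routine; the only point requiring a little care is the bookkeeping of the isomorphism $(V\times V')^{k+1}\cong V^{k+1}\times V'^{k+1}$ and the matching of domains under it, and I do not expect any genuine obstacle.
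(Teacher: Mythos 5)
Your proof is correct and follows essentially the same route as the paper, which disposes of this lemma in one line by observing that it is little more than a notational convention: one groups together the terms coming from $f$ and those coming from $g$, which is exactly your direct reading of Definition \ref{simplDef} (the evaluation points and the scalar denominators split componentwise under $(V\times V')^{k+1}\cong V^{k+1}\times V'^{k+1}$). The additional scaffolding you supply -- the $M_\sss$-conjugation, the density/continuity step for singular $\sss$, and the explicit computation for the diagonal -- is sound but goes beyond what the paper's remark requires.
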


\begin{proof}
This is a rather a notational convention, meaning that we group together 
 terms coming from $f$ and those coming from $g$.
 \end{proof}

\section{The ring theoretic point of view}

In this chapter we are going to explain that the functors arising in higher order
difference- and differential calculus can all be understood as certain
{\em functors of scalar extension}. The basic remark is very simple:
whenever we have a covariant functor $F$ commuting with direct products,
applying $F$ to the base ring $\K$ yields a new ring $F\K$, and in the given
context, $F$ can then be interpreted as the functor of scalar extension by $F\K$.
Differential calculus corresponds to a ``contraction'' of the ring $F$: as the
parameter $\sss$ becomes singular, the ring $F=F_\sss$ tends to a
ring $F_0$ that is less rigid, hence allows for more symmetries and a richer
invariant theory. 

\subsection{First order difference calculus and quadratic scalar extension}

Recall from Definition \ref{cubicDef}  the functor $\hat T^{(t)}$, which is equivalent
to the functor $\SJ^{(0,t)}$ from Definition \ref{SJDef}.

\begin{lemma}
Let $(\K,a,m)$ be the base ring with addition map $a:\K\times \K \to \K$ and product map 
$m: \K\times \K \to \K$,
and let $F =\hat T^{(t)}$ be the extended tangent functor with fixed parameter
$t \in \K$.
Then $a$ and $m$ are cubically and simplicially smooth, and
$(F \K,Fa,Fm)$ is again a ring, which is isomorphic to the truncated polynomial
ring $\K[X]/(X^2 - tX)$.
\end{lemma}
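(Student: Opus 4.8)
The plan is to verify directly that $\hat T^{(t)}\K$, as a set, is $\K \times \K \times \{t\}$, which we may identify with $\K^2$ via $(a,v,t) \mapsto (a,v)$; under this identification the functorial addition $Fa$ and multiplication $Fm$ become explicit polynomial maps on $\K^2$, and the claim reduces to exhibiting a ring isomorphism with $\K[X]/(X^2-tX)$. First I would record that the coordinate maps $a,m:\K\times\K\to\K$ are $\CC^{[k]}$ (hence $\CC^{\langle k\rangle}$ by Theorem \ref{cubictosimplicialTh}) for all $k$: this is immediate since difference quotients of bilinear or linear maps are again polynomial, so the requisite continuous extensions exist trivially. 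This justifies the very statement that $(F\K,Fa,Fm)$ makes sense.

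Next I would compute $Fa$ and $Fm$. For addition, $a$ is linear in each pair of arguments jointly, so $a^{[1]}\bigl((x,v,t),(x',v',t')\bigr) = a(v,v')$ and hence $\hat T^{(t)}a\bigl((x,v),(x',v')\bigr) = (x+x', v+v')$; the extended tangent ring therefore has ordinary componentwise addition on $\K^2$. For multiplication, I would use $m^{[1]}(x,v,t) = $ the difference quotient $\frac{m(x+tv)-m(x)}{t}$ read off the bilinear structure: for $m(x_1,x_2)=x_1 x_2$ one gets, writing $x=(x_1,x_2)$, $v=(v_1,v_2)$, that $m^{[1]}(x,v,t) = x_1 v_2 + v_1 x_2 + t v_1 v_2$. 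Thus
$$
\hat T^{(t)}m\bigl((x_1,x_2,v_1,v_2)\bigr) = \bigl(x_1 x_2,\ x_1 v_2 + v_1 x_2 + t v_1 v_2\bigr).
$$
So on $\K^2$ the product is $(a,v)\cdot(b,w) = (ab,\ aw+vb+tvw)$.

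Finally I would exhibit the isomorphism. Sending $1 \mapsto (1,0)$ and $X \mapsto (0,1)$ and extending $\K$-linearly and multiplicatively, one checks $(0,1)^2 = (0,t) = t\cdot(0,1)$ using the product formula above, so $X^2 - tX \mapsto 0$; this gives a well-defined $\K$-algebra homomorphism $\K[X]/(X^2-tX) \to F\K$. It is a bijection on underlying $\K$-modules since both are free of rank $2$ with $\{1,X\}$ (resp.\ $\{(1,0),(0,1)\}$) mapping to a basis, hence an isomorphism. The topology on $F\K$ is the subspace (product) topology from $\K^2$, matching that of $\K[X]/(X^2-tX) \cong \K^2$, so this is an isomorphism of topological rings.

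I do not expect a genuine obstacle here: the only mildly delicate point is bookkeeping the identification $\hat T^{(t)}\K \cong \K^2$ consistently so that $Fa$ and $Fm$ come out as claimed, and confirming that the functor being product-preserving (noted after Definition \ref{cubicDef}) is what licenses calling $(F\K,Fa,Fm)$ a ring in the first place — i.e.\ that the ring axioms transport along $F$ because they are expressed by commutative diagrams of product maps. Everything else is the routine computation of two difference quotients of polynomial maps.
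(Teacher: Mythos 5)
Your proposal is correct and follows essentially the same route as the paper: compute the first-order difference quotient of the bilinear product map to get $(x_0,x_1)\cdot(y_0,y_1)=(x_0y_0,\ x_0y_1+x_1y_0+tx_1y_1)$ on $F\K\cong\K^2$, note addition is componentwise, and identify the resulting ring (generator $\omega=(0,1)$ with $\omega^2=t\omega$) with $\K[X]/(X^2-tX)$. The paper phrases the passage from invertible $t$ to general $t$ as a density argument, while you observe directly that the polynomial formula is itself the continuous extension; this is the same point, and your explicit remark that product-preservation of $\hat T^{(t)}$ is what transports the ring axioms is exactly the observation the paper makes at the start of its Chapter 2.
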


\begin{proof}
Since $a$ and $m$ are linear (resp.\ bilinear), they are smooth, and
as a $\K$-module, $F\K = \K \times \K$. 
For invertible $t$, we get $\hat m^{(t)}$ by writing the difference quotient
\begin{eqnarray*}
(x_0,x_1)  \cdot (y_0,y_1)& =& 
\bigl(x_0y_0 \, , \,
{(x_0 + t x_1)(y_0 + t y_1) - x_0 y_0 \over t} \bigr) \cr
&  = & \bigl( x_0y_0 \, , \,  x_0y_1 + x_1y_0 + t x_1 y_1 \bigr) . 
\end{eqnarray*}
In a similar way, we see that the sum in this ring
 is just the usual sum in $\K^2$. Hence
as a ring, we get $\K \oplus \omega \K$ with relation $\omega^2 = t \omega$.
It can also be described as the truncated polynomial ring $\K[X]/(X^2 - t X)$.
Again by density, these statements remain true for non-invertible scalars $t$,
and in particular for $t=0$ we obtain the  {\em tangent ring} $T\K$,
which is nothing but the ring of  {\em dual numbers over $\K$}, $\K[X]/(X^2)=
\K \oplus \eps \K$, $\eps^2 =0$.
\end{proof}

\begin{theorem}\label{scalextTh}
Assume $f:U \to W$ is $\CC^{[2]}$ over $\K$.
Then $\hat T^{(t)} f$ is $\CC^{[1]}$ over the ring $\K[X]/(X^2 - tX)$.
\end{theorem}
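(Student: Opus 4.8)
The plan is to reduce the statement to the already-established functoriality of $\hat T^{(t)}$ together with the identification $(F\K, Fa, Fm) \cong \K[X]/(X^2-tX)$ from the preceding lemma. The key conceptual point is that ``being $\CC^{[1]}$ over a ring $R$'' is a notion defined purely in terms of the ring operations of $R$ (addition, scalar multiplication, and the first-order difference quotient with its limiting behaviour), so once we know that $\hat T^{(t)}$ sends the structure maps of $\K$ to the structure maps of $R:=\K[X]/(X^2-tX)$, and that it sends $\CC^{[1]}$-maps over $\K$ to maps admitting the appropriate extension, the claim should follow formally.

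Concretely, first I would spell out what it means for $g:=\hat T^{(t)}f: T^{(t)}U \to T^{(t)}W$ to be $\CC^{[1]}$ over $R$: one must exhibit a continuous extension of the first-order $R$-difference quotient $(\mathbf{x},\mathbf{v},\tau)\mapsto \bigl(g(\mathbf{x}+\tau\mathbf{v})-g(\mathbf{x})\bigr)\cdot\tau^{-1}$, where now $\mathbf{x},\mathbf{v}\in (F V)$, $\tau\in R$, and the operations $+$, $\cdot$, and the inversion $\tau^{-1}$ are those of the $R$-module $FV = V\times V$ and the ring $R=FK$. Since $R = \K\oplus\omega\K$ with $\omega^2=t\omega$, an element $\tau = \tau_0 + \omega\tau_1$ is invertible in $R$ precisely when $\tau_0$ and $\tau_0 + t\tau_1$ are invertible in $\K$ (this is immediate from the explicit product formula in the lemma), and such $\tau$ are dense in $R$ because $\K^\times$ is dense in $\K$. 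So the uniqueness of any such extension is automatic, and the real content is existence.

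For existence I would invoke the functoriality $\hat T^{(t)}(g_2\circ g_1) = \hat T^{(t)}g_2 \circ \hat T^{(t)}g_1$ and its compatibility with direct products, applied to the structure maps. The point is that $f$ being $\CC^{[2]}$ over $\K$ means exactly that $f^{[1]}:U^{[1]}\to W$ is $\CC^{[1]}$ over $\K$; and the $R$-difference quotient of $\hat T^{(t)}f$ can be rewritten, using the ring isomorphism $FK\cong R$ and $F$'s compatibility with the arithmetic operations, as a composition of $\hat T^{(t)}$ applied to $f$, $f^{[1]}$ and the (smooth) structure maps $a,m$ of $\K$ — in other words, the ``$R$-arithmetic'' on the extended tangent bundle is precisely $\hat T^{(t)}$ of the ``$\K$-arithmetic,'' by the lemma. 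Concretely, one computes $\hat T^{(t)}f$ in coordinates, forms the $R$-difference quotient componentwise using the product rule $\omega^2=t\omega$, and recognizes the resulting expression as built from $f$, $f^{[1]}$, $f^{[2]}$ and their evaluations — and $f^{[2]}$ exists and is continuous on $U^{[2]}$ precisely because $f$ is $\CC^{[2]}$. This yields the desired continuous extension of the $R$-difference quotient, hence $\hat T^{(t)}f$ is $\CC^{[1]}$ over $R$.

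The main obstacle I anticipate is purely bookkeeping: one must carefully match the extended domain $(T^{(t)}U)^{[1]}$ over $R$ with the domain $U^{[2]}$ over $\K$ (via the change of variables implicit in the lemma, analogous to the operator $M_\sss$ in the simplicial case), and check that the ``singular'' locus — where $\tau\in R$ fails to be invertible — maps correctly onto the locus where the $\K$-scalars in $f^{[2]}$'s denominators vanish, so that continuity of the extension over $R$ is genuinely equivalent to continuity of $f^{[2]}$. Once this dictionary between $R$-arithmetic and $\hat T^{(t)}$ of $\K$-arithmetic is set up cleanly, the rest is a formal consequence of the functor property; I would not expect any new analytic input beyond what is already packaged in the definition of $\CC^{[2]}$ and the density of $\K^\times$.
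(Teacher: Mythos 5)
Your proposal is correct and follows essentially the same route as the paper: the paper's proof simply invokes the $t=0$ argument of \cite{Be08}, Theorem 6.3, noting that it ``uses only the fact that $\hat T^{(t)}$ is a covariant functor,'' and your reduction --- identify $F\K$ with $\K[X]/(X^2-tX)$ via the preceding lemma, then apply the product-preserving functor $\hat T^{(t)}$ to the $\K$-arithmetic and to the relation defining $f^{[1]}$, so that the $R$-difference quotient of $\hat T^{(t)}f$ is expressed through $f^{[1]}$ and $f^{[2]}$, whose continuity is exactly the $\CC^{[2]}$-hypothesis --- is precisely that argument. The domain-matching and the density of $R^\times$ that you flag as bookkeeping are likewise treated as routine (indeed left implicit) in the paper.
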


\begin{proof}
The proof of the special case $t=0$ (\cite{Be08}, Theorem 6.3)
can be applied literally; it uses only the fact that
$\hat T^{(t)}$ is a covariant functor.  
\end{proof}

\nin To first order, the cubic and simplicial calculi coincide, and hence 
the following is a restatement of the preceding theorem:

\begin{theorem}
Assume $f:U \to W$ is $\CC^{<2>}$ over $\K$.
Then $\SJ^{(s_0,s_1)}f$ is $\CC^{\langle 1 \rangle}$ over the ring $\K[X]/(X^2-(s_1-s_0)X)$.
\end{theorem}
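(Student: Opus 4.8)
The plan is to reduce the statement to the preceding Theorem~\ref{scalextTh} and then to observe that its proof survives the passage from the hypothesis $\CC^{[2]}$ to the weaker hypothesis $\CC^{<2>}$, because to first order the cubic and the simplicial calculi literally coincide. First I would put $t:=s_1-s_0$ and note that, by the translation invariance $f^{\langle 1\rangle}(v_0,v_1;s_0,s_1)=f^{\langle 1\rangle}(v_0,v_1;0,t)$ together with Equation~(\ref{Efeins}), the functor $\SJ^{(s_0,s_1)}$ agrees (as a map) with $\SJ^{(0,t)}$, which by Section~2.1 is the extended tangent functor $\hat T^{(t)}$; and by the Lemma preceding Theorem~\ref{scalextTh} the associated scalar ring is $\hat T^{(t)}\K\cong\K[X]/(X^2-tX)=:\B$, which is again an admissible base ring (that Lemma together with the Introduction). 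Since the formal identities~(\ref{Efeins}) are valid over an arbitrary admissible base ring, the classes $\CC^{[1]}$ and $\CC^{\langle 1\rangle}$ over $\B$ coincide, so the assertion becomes: \emph{if $f$ is $\CC^{<2>}$ over $\K$, then $\hat T^{(t)}f$ is $\CC^{[1]}$ over $\B$}.

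For this I would re-run the proof of Theorem~\ref{scalextTh} (i.e.\ of \cite{Be08}, Thm.~6.3) with the simplicial objects playing the roles of the cubic ones. That proof uses only (i) that $\hat T^{(t)}$ is a covariant, product-preserving functor --- available here by Theorem~\ref{SimplicialChainRule} and the product-compatibility lemma at the end of Section~1.2 --- and (ii) the regularity of $f$, which enters \emph{solely} through the fact that the first-order difference quotient map $f^{[1]}$ is itself of class $\CC^{[1]}$: one identifies the $\B$-difference quotient of $\hat T^{(t)}f$ with $\hat T^{(t)}$ applied to $f^{[1]}$, modulo a canonical reshuffling of the extended domains, and then functoriality together with continuity of $f^{[1]}$ yields the required continuous extension. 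Now by Equation~(\ref{Efeins}) the map $f^{[1]}$ is, after the trivial reparametrization of its scalar slot, exactly $f^{\langle 1\rangle}$, and ``$f^{\langle 1\rangle}$ is of class $\CC^{\langle 1\rangle}$'' is precisely part of the definition of $f$ being $\CC^{<2>}$. Concretely, inverting a scalar $\beta=\beta_0+\omega\beta_1\in\B$ on the dense set where it is a unit and collecting the $\omega^0$- and $\omega^1$-components of the difference quotient produces the maps $f^{<1>}$ and $f^{<2>}$ evaluated at suitable arguments (cf.\ Corollary~\ref{DiffCor}), which extend continuously exactly because $f\in\CC^{<2>}$.

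The one point needing care --- and the reason this is not \emph{verbatim} a restatement of Theorem~\ref{scalextTh} --- is that $\CC^{<2>}$ is a priori strictly weaker than $\CC^{[2]}$ (the converse of Theorem~\ref{cubictosimplicialTh} being only conjectured), so Theorem~\ref{scalextTh} cannot be quoted as a black box; one genuinely has to transport its proof. Since $\CC^{[1]}=\CC^{\langle 1\rangle}$ throughout and $\SJ^{(s_0,s_1)}=\hat T^{(s_1-s_0)}$, however, this transport is mechanical, and the only actual work is the bookkeeping of the canonical domain identification $(\hat T^{(t)}U)^{[1]}\cong\hat T^{(t)}(U^{[1]})$ --- identical to the cubic case already handled in Theorem~\ref{scalextTh}. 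I therefore expect no genuine obstacle here; the statement is moreover subsumed by the general scalar-extension theorem for simplicial functors proved later in the paper, but the reduction above keeps this first-order case self-contained.
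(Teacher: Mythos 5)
Your reduction of the statement to ``$\hat T^{(t)}f$ is $\CC^{[1]}$ over $\K[X]/(X^2-tX)$'' with $t=s_1-s_0$ is exactly the paper's intent (the paper presents the theorem as a restatement of Theorem \ref{scalextTh}, and it is also the case $k=m=1$ of Theorem \ref{SimpScalExtTheorem}), and you are right to flag that the hypothesis $\CC^{<2>}$ is a priori weaker than $\CC^{[2]}$, so Theorem \ref{scalextTh} cannot be quoted as a black box. The genuine gap is in how you then bridge that difference: you claim that ``$f^{\langle 1\rangle}$ is of class $\CC^{\langle 1\rangle}$'' is part of the definition of $f\in\CC^{<2>}$. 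It is not. Unlike the cubic classes, the simplicial classes are not defined inductively: $\CC^{<2>}$ only requires that the divided differences $f^{\rangle 1\langle}$ and $f^{\rangle 2\langle}$ admit continuous extensions to singular $\sss$; it says nothing about simplicial differentiability of the extended map $f^{<1>}$ in its variables. Worse, by Equation (\ref{Efeins}) one has $f^{<1>}(v_0,v_1;s_0,s_1)=f^{[1]}(v_0,v_1,s_1-s_0)$, so ``$f^{<1>}$ is $\CC^{\langle 1\rangle}$'' is equivalent to ``$f^{[1]}$ is $\CC^{[1]}$'', i.e.\ to $f\in\CC^{[2]}$ --- precisely the conjectured, unproven converse of Theorem \ref{cubictosimplicialTh} at order two. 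Hence your transport of the proof of \cite{Be08}, Thm.\ 6.3, whose sole regularity input is that $f^{[1]}$ be $\CC^{[1]}$, silently upgrades the hypothesis back to $\CC^{[2]}$ and is circular at exactly the point you identified as the nontrivial one.

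What can close the gap (and is how the paper argues when it proves the general simplicial scalar-extension theorem) is to avoid the inductive, cubic-style argument altogether and instead use the characterization of the simplicial classes by limited expansions (Corollary \ref{LimitedCorollary}) together with the conjugation relation $\times^{2}f\circ M_\sss = M_\sss\circ\SJ^{(\sss)}f$ and functoriality: the continuous coefficient maps needed for the first-order expansion of $\hat T^{(t)}f$ over $\B=\K[X]/(X^2-tX)$ are then manufactured from $f^{<1>}$ and $f^{<2>}$ alone, which is all that $\CC^{<2>}$ provides. Your final paragraph gestures at this computation (inverting $\beta\in\B^\times$ and sorting the $\omega^0$- and $\omega^1$-components), but you never verify that every increment occurring in the $\B$-difference quotient of $\hat T^{(t)}f$ can be brought to the constrained ``simplicial'' shape of evaluation points appearing in $f^{\rangle 2\langle}$; that verification is the actual content of the theorem under the weaker hypothesis and cannot be replaced by the incorrect ``by definition'' step.
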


We add a few remarks on the structure of the ring $\K_t:= \K[X]/(X^2 - tX)$.
There is a well-defined projection
$$
\pi:\K_t \to \K, \quad [P(X)] \mapsto P(0)
$$
which splits via the natural map $\K \to \K_t$, $r \mapsto [r]$ (inclusion of constant
polynomials).
The kernel of the projection is isomorphic to $\K$ with product
$(a,b) \mapsto atb$; 
if $t$ is invertible, the kernel is isomorphic to $\K$ as a ring, and then
$\K_t$ is isomorphic to the direct product of rings $\K \times \K$.
If $t$ is nilpotent, the kernel is a nilpotent $\K$-algebra,
% (this case is the analog of the {\em Weil algebras}
%from \cite{KMS}); in particular,
and if $t = 0$,  the kernel carries the zero product.

One may describe any element $z = a + \omega b \in \K_t$ by the $2 \times 2$-matrix 
representing the linear map
left translation by $z$. Hence, we are led to define $\tr(z):=2a+tb$ , $\det(z):=a^2 + tab$ and
$\overline z := a +bt - \omega b$. Then every $z$ satisfies the relation
$z^2 + \tr(z)z+\det(z)=0$, and $z$ is invertible iff $\det(z)$ is invertible in $\K$, in which
case $z\inv = \frac{\overline z}{\overline z \, z}=\frac{\overline z}{\det (z)}$. 

The automorphism group $\Aut_\K(\K_t)$ becomes richer as $t$ becomes singular.
If $t$ is invertible, $\K_t$ is isomorphic to $\K\times \K$, and the only non-trivial
$\K$-linear automorphism is the {\em exchange automorphism} exchanging both copies of $\K$.
Let us describe this automorphism in a more geometric way, that shows how this
automorphism survives also for singular $t$.
In general,
there are automorphisms arising from the affine group of $\K$ which acts on the
polynomial ring $\K[X]$; such automorphisms define automorphisms of $\K_t$ if
they preserve the ideal $(X^2 -tX)$ by which we take the quotient; and this ideal
is preserved if the affine map of $\K$ preserves the set of zeroes $\{ 0 , t \}$ of the ideal.
Thus,
if $t$ is invertible, the  exchange automorphism is induced by the affine map
exchanging the two roots (acting on polynomials  by
$[a+bX] \mapsto [a+b(t-X)]$, hence this is also the map $z \mapsto \overline z$
described above);
for $t=0$, there are more such automorphisms
since all dilations preserve the zero set $\{ 0 \}$, and hence we have a one-parameter
family of automorphisms, given by $[P(X)] \mapsto [P(rX)]$ with $r \in \K^\times$.
%The exchange automrphism corresponds to $r=-1$.
%super-diff ? : at least we see that $r=-1$ survives on the difference level 

\subsection{Higher order cubic calculus and iterated scalar extensions}

In differential geometry, the iterated tangent functors $T^k = T \circ \ldots \circ T$
play an important role (see \cite{Be08}, \cite{White}). In a similar way, we may 
compose the scalar extension functors from the preceding section:
fixing $t'=s+s'X_1 \in \K':=\K_t = \K[X_1]/(X_1^2 - tX_1)$, we consider the iterated scalar extension
\begin{eqnarray*}
\A &:= &\K_{t'}' =  (\K_t)_{t'} = \bigl( \K[X_1]/(X_1^2 - t X_1) \bigr) [X_2]/
(X_2^2 - s' X_1 X_2 - s X_2)
\cr
&=& \K[X_1,X_2]/ \bigl( (X_1^2 - t X_1) ,
(X_2^2 - s' X_1 X_2 - s X_2) \bigr) 
\end{eqnarray*}
and applying Theorem \ref{scalextTh} twice, the second order functor $T^{(t,s,s')}$ 
is seen to be functor of scalar extension from $\K$ to $\A$.
More systematically,  we now construct a sequence
$\A_0,\A_1,\ldots$ of $\K$-algebras and of scalar extension functors, extending
the base ring from $\K$ to $\A_k$. 
At each step we have a quadratic ring extension, so that the dimension over $\K$
will double in each step. That is, we will obtain a canonical identification
$\A_k = \K^{2^k}$ as $\K$-modules.

\begin{definition}
Let $I=\{ 1,\ldots ,n \}$ be the standard $n$-element set and fix a family
$\ttt := (t_J)_{J \subset I}$ of elements $t_J \in \K$.
If $J= \{ i_1, \ldots, i_k \} $, then instead of
$t_J$ we write also $t_{i_1,\ldots,i_k}$, and 
we write $X_J:=X_{i_1} \cdots X_{i_k}$ for a product of indeterminates. 
Let $\A_k:=\A_k^{(\ttt)}$ be the $\K$-algebra 
$$
\A_k := \K[X_1,\ldots,X_k]/ R_k
$$
where $R_k=R^{(\ttt)}_k$ is the ideal generated by the polynomials (depending on $\ttt$)
\begin{eqnarray*}
P_1(X_1,\ldots,X_k) & = & X_1^2 - t_1 X_1 \cr
P_2(X_1,\ldots,X_k) & = &X_2^2 - t_2X_2 - t_{1,2} X_{1}X_2 \cr
P_3(X_1,\ldots,X_k) & = &X_3^2 - t_3 X_3 - t_{1,3} X_1 X_3 - t_{2,3} X_2X_3 - t_{1,2,3} X_1X_2X_3 \cr
& \vdots & \cr
P_k(X_1,\ldots,X_k) & = &X_k^2 - \sum_{J\subset \{ 1,\ldots,k-1 \} } t_{J \cup \{ k \} } X_J X_k \, .
\end{eqnarray*}
\end{definition}

\begin{lemma}\label{RingRecursionLemma}
The algebra $\A_k$ is a quadratic ring extension of $\A_{k-1}$. 
More precisely, $\A_k$ is a free $\K$-module having dimension $2^k$,
with canonical basis the classes of the polynomials
$X_J$ with $J \subset \{ 1,\ldots, k \}$, and as a ring,
$$
\A_{k}=\A_{k-1}[X_k]/ 
(X_k^2 - \ttt' \cdot X_k)
$$
where $\ttt' := (t_J)_{J\subset \{ 1,\ldots, k \}, k \in J}$ is identified with an element
of $\A_{k-1} = \K^{2^{k-1}}$ by mapping any
$J \subset \{ 1,\ldots, k \}$ such that $k \in J$ to the set $J \setminus \{ k \}$.
\end{lemma}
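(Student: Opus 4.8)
The plan is to verify both assertions --- the free $\K$-module structure with basis $\{X_J : J \subset \{1,\dots,k\}\}$ and the presentation $\A_k = \A_{k-1}[X_k]/(X_k^2 - \ttt'\cdot X_k)$ --- simultaneously by induction on $k$, since they feed into each other. First I would handle the base case $k=1$: here $\A_1 = \K[X_1]/(X_1^2 - t_1 X_1)$, which is plainly free over $\K$ with basis $\{1, X_1\} = \{X_\emptyset, X_{\{1\}}\}$, and the ``recursion'' $\A_1 = \A_0[X_1]/(X_1^2 - t_1 X_1)$ with $\A_0 = \K$ is a tautology once one notes $\ttt' = (t_{\{1\}}) = t_1 \in \K = \A_0$.

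For the inductive step, assume $\A_{k-1}$ is $\K$-free of rank $2^{k-1}$ with basis the classes of $X_J$, $J \subset \{1,\dots,k-1\}$. I would introduce the auxiliary ring $\B := \A_{k-1}[X_k]/(X_k^2 - \ttt' \cdot X_k)$, where $\ttt'$ is interpreted in $\A_{k-1}$ via the stated identification $J \mapsto J \setminus \{k\}$: concretely $\ttt' \cdot X_k = \sum_{J \subset \{1,\dots,k-1\}} t_{J \cup \{k\}} X_J X_k$, which is exactly the ``tail'' of the relation $P_k$. Then $\B$ is, by the $k=1$-type argument applied over the base ring $\A_{k-1}$, a free $\A_{k-1}$-module with basis $\{1, X_k\}$, hence a free $\K$-module of rank $2 \cdot 2^{k-1} = 2^k$ with basis $\{X_J, X_J X_k : J \subset \{1,\dots,k-1\}\} = \{X_{J'} : J' \subset \{1,\dots,k\}\}$. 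So it suffices to produce a $\K$-algebra isomorphism $\A_k \cong \B$.

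The isomorphism is built from the obvious surjection: there is a natural $\K$-algebra map $\K[X_1,\dots,X_k] \to \B$, and I would check that each generator $P_i$ of $R_k$ maps to $0$ in $\B$. For $i < k$ the polynomial $P_i$ involves only $X_1,\dots,X_{k-1}$ and lies (by the inductive hypothesis, using $\A_{k-1} = \K[X_1,\dots,X_{k-1}]/R_{k-1}$ and $P_i \in R_{k-1}$) in the kernel of $\K[X_1,\dots,X_{k-1}] \to \A_{k-1}$, hence maps to $0$; for $i=k$, $P_k = X_k^2 - \ttt'\cdot X_k$ is precisely the defining relation of $\B$ over $\A_{k-1}$. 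This gives a surjective $\K$-algebra homomorphism $\phi: \A_k \to \B$. Conversely, there is an evident map $\B \to \A_k$: the composite $\A_{k-1} = \K[X_1,\dots,X_{k-1}]/R_{k-1} \to \K[X_1,\dots,X_k]/R_k = \A_k$ sends $X_k^2 - \ttt'\cdot X_k = P_k$ to $0$, so it extends to $\psi: \B \to \A_k$. The two composites are identity on generators, so $\phi$ and $\psi$ are mutually inverse. Finally, transporting the $\K$-basis of $\B$ through $\psi$ shows that the classes of $X_J$, $J \subset \{1,\dots,k\}$, form a $\K$-basis of $\A_k$, completing the induction.

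The one point requiring genuine care --- the ``main obstacle'' such as it is --- is confirming that the ideal-membership and quotient manipulations commute correctly, i.e. that the natural map $\A_{k-1} \to \A_k$ is well-defined and that $R_k$ is generated over $\K[X_1,\dots,X_k]$ by $R_{k-1}$ together with $P_k$ in a way compatible with the iterated-quotient description $\B = (\K[X_1,\dots,X_{k-1}]/R_{k-1})[X_k]/(P_k)$. This is essentially the standard fact that for an ideal $R_{k-1}$ of $\K[X_1,\dots,X_{k-1}]$ and a polynomial $Q \in (\K[X_1,\dots,X_{k-1}]/R_{k-1})[X_k]$ one has $\bigl(\K[X_1,\dots,X_{k-1}]/R_{k-1}\bigr)[X_k]/(Q) \cong \K[X_1,\dots,X_k]/(R_{k-1} + (\tilde Q))$ for any lift $\tilde Q$ of $Q$; I would state it once and invoke it, taking $\tilde Q = P_k$. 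Everything else is bookkeeping with the index set identification $J \leftrightarrow J \setminus \{k\}$, which I would spell out just enough to make the claim $\ttt' \cdot X_k = P_k - X_k^2 + \text{(nothing)}$ transparent, i.e. that the coefficient of $X_J$ in $\ttt' \in \A_{k-1}$ is $t_{J \cup \{k\}}$.
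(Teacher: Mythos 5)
Your proposal is correct and follows essentially the same route as the paper: the whole content is the observation that $R_k$ is generated by $R_{k-1}$ together with $P_k$, so that $\K[X_1,\ldots,X_k]/R_k \cong \bigl(\K[X_1,\ldots,X_{k-1}]/R_{k-1}\bigr)[X_k]/(P_k)$, with $P_k$ monic quadratic in $X_k$ giving the rank-$2$ (hence rank-$2^k$ over $\K$) freeness by induction. You merely spell out the iterated-quotient identity and the two mutually inverse maps explicitly, which the paper leaves as a one-line remark.
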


\begin{proof}
For $k=1$ the claim is obviously true. For general $k$,
the lemma translates merely the fact that, under the inclusion
$\K[X_1,\ldots,X_{k-1}] \subset \K[X_1,\ldots,X_k]$,
the ideal $R_k$ is generated by $R_{k-1}$ together with the polynomial $P_k$,
so that 
$$
\K[X_1,\ldots,X_{k}]/R_k =  \big( \K[X_1,\ldots,X_{k-1}] /R_{k-1} \big) / (P_k),
$$
and $P_k$ is a quadratic polynomial of $X_k$ if all variables except the last are
frozen.
\end{proof}

Combinatorial formulas for the ``structure constants'' $\Gamma^{JK}_L(\ttt) \in \K$, defined
by 
$$
X_J \cdot X_K \equiv \sum_{L \subset \{ 1,\ldots, k \} } \Gamma^{JK}_L(\ttt) X_L \, ,
$$
are fairly complicated. It is quite easy to see that $\Gamma^{JK}_L(\ttt)=0$
unless $(J \cup K) \subset L \subset \{ 1,\ldots, \max(J,K) \}$, and that, if $J \cap K = \emptyset$,
then 
$X_J \cdot X_K = X_{J \cup K}$.
The general case is illustrated by relations of the form 
$$
X_2 \cdot X_{\{ 1,2 \}} = X_1 X_2^2 \equiv  X_1 (t_2 X_2 + t_{1,2} X_1 X_2) =
(t_2 + t_1 t_{1,2}) X_{\{ 1,2 \}},
$$
$$
X_2 \cdot X_{\{ 2,3 \} } = X_2^2 X_3 \equiv (t_2 X_2 + t_{1,2} X_1 X_2)X_3 =
 t_2 X_{ \{ 2,3 \} } +
t_2 t_{1,2} X_{\{ 1,2,3 \} }.
$$
Of course, for special choices of $\ttt$ the structure may become much simpler;
this is in particular the case for $\ttt = 0$, where we get the higher order tangent ring
$T^k \K$. Similar remarks hold concerning inversion in $\A_k$.

\begin{theorem}\label{scalextTh3}
Assume $f:U \to W$ is $\CC^{[k+m]}$ over $\K$ and let
$\ttt \in \K^{2^k-1}$.
Then $T^{(\ttt)}f$ is $\CC^{[m]}$ over the 
algebra $\A_k^{(\ttt)}$.
\end{theorem}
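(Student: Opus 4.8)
The plan is to proceed by induction on $k$, exactly as the iterated construction of $\A_k$ suggests. The base case $k=0$ is vacuous (the functor is the identity, $\A_0 = \K$), and the case $k=1$ is precisely Theorem~\ref{scalextTh}: if $f$ is $\CC^{[1+m]}$ over $\K$, then $\hat T^{(t_1)}f$ is $\CC^{[m]}$ over $\K[X_1]/(X_1^2 - t_1 X_1) = \A_1^{(\ttt)}$. For the inductive step, I would use the fact that $\hat T^n f = \hat T(\hat T^{n-1}f)$ and, correspondingly, that fixing the scalar parameters decomposes as $T^{(\ttt)} = T^{(\ttt')} \circ T^{(\ttt'')}$ in a suitable sense, where $\ttt''$ collects the ``first-layer'' parameters $t_J$ with $k \notin J$ and $\ttt'$ the ``top-layer'' parameters with $k \in J$.

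\emph{Key steps, in order.} First, I would make precise the factorization of the extended tangent functor: applying $\hat T$ one more time to $T^{(\ttt'')} f$ and freezing the new batch of scalar variables at the values $\ttt'$ realizes $T^{(\ttt)} f$ as $\hat T^{(\ttt')}$ applied to $T^{(\ttt'')}f$, where now $\ttt'$ is read as an element of $\A_{k-1}^{(\ttt'')}$ via the identification $J \mapsto J \setminus \{k\}$ described in Lemma~\ref{RingRecursionLemma}. Second, by the inductive hypothesis applied to $f$ (which is $\CC^{[(k-1)+(m+1)]}$ over $\K$, since $k+m = (k-1)+(m+1)$), the map $T^{(\ttt'')} f$ is $\CC^{[m+1]}$ over the ring $\A_{k-1} := \A_{k-1}^{(\ttt'')}$. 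Third, I would invoke Theorem~\ref{scalextTh} \emph{over the base ring $\A_{k-1}$ in place of $\K$}: a $\CC^{[m+1]}$-map over $\A_{k-1}$ has $\CC^{[m]}$ extended tangent functor over the quadratic extension $\A_{k-1}[X_k]/(X_k^2 - \ttt' X_k)$. Fourth, Lemma~\ref{RingRecursionLemma} identifies that quadratic extension with $\A_k^{(\ttt)}$, completing the induction.

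\emph{The main obstacle.} The delicate point is not the ring-theoretic bookkeeping but the legitimacy of Step~3, i.e.\ applying Theorem~\ref{scalextTh} with $\A_{k-1}$ as base ring. This requires that $\A_{k-1}$ itself be a ``commutative unital topological ring with dense unit group,'' so that it is an admissible base ring for the calculus in the sense of the Notation; one must check (or cite) that the quadratic-extension construction preserves this property, and that the topology on $\A_{k-1} = \K^{2^{k-1}}$ (product topology) makes all structure maps continuous and the unit group dense --- for instance, $\det(z) \in \K^\times$ cuts out a dense set. A second, more structural subtlety is that Theorem~\ref{scalextTh} as stated concerns $\hat T^{(t)}$ for a \emph{scalar} $t \in \K$, whereas here $\ttt'$ is a genuine element of the extended ring $\A_{k-1}$, not of $\K$; one needs that the proof of Theorem~\ref{scalextTh} (which, per its own proof, ``uses only the fact that $\hat T^{(t)}$ is a covariant functor'') goes through verbatim with the base ring replaced by $\A_{k-1}$ and $t$ replaced by any element $\ttt' \in \A_{k-1}$. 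Granting these two points, the induction is routine; I would state the topological/unit-group stability of the quadratic extension as a small preliminary lemma before giving the inductive proof.
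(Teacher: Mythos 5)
Your proposal is correct and is essentially the paper's own argument: the paper proves Theorem \ref{scalextTh3} by exactly this induction, invoking Theorem \ref{scalextTh} at each step and using Lemma \ref{RingRecursionLemma} to identify the quadratic extension $\A_{k-1}[X_k]/(X_k^2 - \ttt'\cdot X_k)$ with $\A_k^{(\ttt)}$. The two subtleties you flag (admissibility of $\A_{k-1}$ as a base ring with dense unit group, and the parameter $\ttt'$ lying in $\A_{k-1}$ rather than in $\K$) are real but are exactly what the paper silently absorbs into the remark that the proof of Theorem \ref{scalextTh} ``uses only the fact that $\hat T^{(t)}$ is a covariant functor,'' so making them explicit as a preliminary lemma is a reasonable refinement, not a different route.
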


\begin{proof}
The result follows by induction from Theorem \ref{scalextTh}
since, by the lemma, the inductive definition of the rings
$\A_{k}$ corresponds to the inductive definition of the functors $T^{(\ttt)}$.
\end{proof}

Let us add some remarks on the structure of the rings $\A_k$, and in particular on their
automorphisms. For simplicity, let us consider the case $k=2$. 
%%%%
\begin{comment}
There are two projections
$$
\begin{matrix}  & & \A_2^{(t,s,s')} & & \cr
 & \swarrow & & \searrow & \cr
\K[X_1]/(X_1^2 - t X_1) & & & & \K[X_2]/(X_2^2 - s X_2) \cr
 & \searrow & & \swarrow & \cr
 & & \K & & 
\end{matrix}
$$
given by $P(X_1,X_2) \mapsto P(X_1,0)$, resp.\ $P(X_1,X_2) \mapsto P(0,X_2)$.
The first of these two maps is projection $\A_2 \to \A_1$, but the second is not
necessarily of this kind (it is well-defined ?? I'm afraid not, so this second projection does not
exist! the roles of $X_1$ and $X_2$ in the definition are not symmetric; this would mean
that the simplicial structure also is a feature that appears only for special choices of $\ttt$ !); 
hence the first projection has a section
(inclusion $\A_1 \to \A_2$), whereas the second projection does in general not have
a section.
\end{comment}
%%%%%%%%%%
There are surjective ring homomorphisms
$$
\A_2 \to \A_1 \to \K, \quad P(X_1,X_2)\mapsto P(X_1,0) \mapsto P(0,0)
$$
 which admit sections. 
Note that $P(X_1,X_2) \mapsto P(0,X_2)$ does not pass to a well-defined homomorphism
on $\A_2$; however, this is the case if $t_{1,2}=0$ (in this case, the rings
$\A^{(t_1,t_2,0)}$ and $\A^{(t_2,t_1,0)}$ are isomorphic).
%Note also that, in the sense of abstract algebra, $\K$ may also be considered as scalar
%extension of $\A_k$, since there is an algebra homomorphism $\A_k \to \K$
%(it is not required that the extension homomorphism be injective).

As in the first order case, the automorphism group becomes richer as $\ttt$ tends to singular
values:
for  non-singular $\ttt$, iterating the ring isomorphism $\A_1 \cong \K\times \K$,
we have $\A_2 \cong \K^4$, and hence the permutation group $\Sigma_4$ acts by
automorphisms of $\A_2$. Two of these automorphisms are 
the  two commuting exchange automorphisms, coming in each step from
the quadratic  extension $\K \subset \A_1 \subset \A_2$. 
The others do not seem  to have a simple geometric description.

On the other hand, ``geometric'' automorphisms come from the affine group of $\K^2$,
acting on $\K[X_1,X_2]$ in the usual way:
namely,  if $\K$ has no zero-divisors,  the equations
$X_1(X_1- t_1)= 0 $,
$X_2(X_2 - t_{1,2} X_1 - t_2 )=0$
define two pairs of lines forming a trapezoid in $\K^2$. An affine transformation of $\K^2$
preserving this figure gives rise to an automorpism of $\A_2$.
In the generic case, there is exactly one non-trivial such map (it is of order $2$).

If $t_{1,2}=0$ and $t_1=t_2$, then the trapezoid becomes a square, and we obviously have a
new symmetry exchanging both axes (the ``flip''): this symmetry is precisely the one giving rise
to Schwarz's lemma (see its proof in \cite{BGN04}, Lemma 4.6).
If moreover $t_1 = t_2 =0$, then we are in the case of the ring $TT\K$, and the figure 
degenerates to two perpendicular lines --
in particular, this figure is preserved by all $2 \times 2$-diagonal matrices, and by their
composition with the flip (if $\K=\R$, this gives the full description of the automorphism group,
see \cite{KMS}, p.\ 320).  

If $t_{1,2}=1$ and $t_1=t_2=0$, the figure degenerates to three concurrent lines, and all
multiples of the identity on $\K^2$ give rise to endomorphisms of $\A_2$.

\subsection{Simplicial calculus and simplicial ring extensions}

\begin{theorem}\label{SimpScalExtTheorem}
Fix $\sss = (s_0,\ldots,s_k) \in \K^{k+1}$ and assume $s_0=0$ (otherwise replace
$s_i$ by $s_i-s_0$).
 Then the simplicial $\sss$-extension functor
from Theorem \ref{SimplicialChainRule}  is the functor of scalar extension from $\K$ to the ring
$$
\B_k:=\B^\sss_k:=\K[X]/(X(X-s_1))\ldots (X-s_k)),
$$
that is, if $f:U \to W$ is $\CC^{<k+m>} $ over $\K$, then
$\SJ^{(\sss)} f$ is $\CC^{<m>}$ over $\B_k$.
In particular, if $s_i=0$ for all $i$, we get the jet functor of
scalar extension from $\K$ to $\K[X]/(X^{k+1})$.
\end{theorem}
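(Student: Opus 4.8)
The plan is to combine two ingredients already available: first, the identification of the ring $\SJ^{(\sss)}\K$ as $\B_k^\sss$ (this is Lemma \ref{SimpScalExtLemma}, announced in the introduction); and second, the purely formal consequence of functoriality that a product-preserving covariant functor $F$, applied to a base ring $\K$, turns $F$-images of $\K$-smooth maps into maps that are smooth over the ring $F\K$. So the argument has an algebraic step and an analytic step, and the glue between them is the chain rule (Theorem \ref{SimplicialChainRule}) together with the compatibility of $\SJ^{(\sss)}$ with direct products (the Lemma just before Section 2).

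First I would establish, or cite from Lemma \ref{SimpScalExtLemma}, the ring isomorphism $\SJ^{(\sss)}\K \cong \B_k^\sss = \K[X]/\bigl(X(X-s_1)\cdots(X-s_k)\bigr)$. Concretely: $\SJ^{(\sss)}\K = \K^{k+1}$ as a $\K$-module, and one computes $\SJ^{(\sss)}a$ and $\SJ^{(\sss)}m$ for the addition and multiplication maps $a,m:\K\times\K\to\K$ (which are affine, resp.\ bilinear, hence of class $\CC^{\langle k\rangle}$ for every $k$). Since $\SJ^{(\sss)}$ commutes with direct products, $\SJ^{(\sss)}a$ and $\SJ^{(\sss)}m$ are genuine binary operations on $\SJ^{(\sss)}\K$; functoriality (Theorem \ref{SimplicialChainRule}) forces the ring axioms, so $(\SJ^{(\sss)}\K,\SJ^{(\sss)}a,\SJ^{(\sss)}m)$ is a commutative unital ring. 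To pin down the isomorphism type, use the conjugation formula (\ref{simplconj}), $\SJ^{(\sss)}f = N_\sss\circ(\times^{k+1}f)\circ M_\sss$, for non-singular $\sss$: this shows $\SJ^{(\sss)}\K\cong \K\times\cdots\times\K$ ($k+1$ factors) via $M_\sss$, and one recognizes the latter as $\K[X]/\prod_{i=0}^k(X-s_i)$ by Chinese Remainder (the roots being $s_0=0,s_1,\dots,s_k$); then density of $\K^\times$ lets the identification persist for all $\sss$, including singular ones, giving $\B_k^\sss$. This handles the ``in particular'' clause too: for $\sss=(0,\dots,0)$ the product becomes $X^{k+1}$.

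Next, for the smoothness claim itself, I would run an induction on $m$. For $m=0$: if $f$ is $\CC^{\langle k\rangle}$ over $\K$ then $\SJ^{(\sss)}f$ is a continuous map $\SJ^{(\sss)}U\to W^{k+1}$ by definition of the class $\CC^{\langle k\rangle}$, i.e.\ it is $\CC^{<0>}$, and $\SJ^{(\sss)}U$ has dense interior over $\B_k^\sss=\K^{k+1}$. For the inductive step one needs: if $f$ is $\CC^{\langle k+m\rangle}$ over $\K$ then $\SJ^{(\sss)}f$ is $\CC^{\langle 1\rangle}$ over $\B_k^\sss$ and its $\B_k^\sss$-difference quotient is again of the form $\SJ^{(\sss)}(\text{something }\CC^{\langle k+m-1\rangle})$, so that one can iterate. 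The mechanism is exactly the one used in \cite{Be08} for the tangent functor (invoked in Theorem \ref{scalextTh}): the $\B_k^\sss$-difference quotient of $\SJ^{(\sss)}f$ is computed, using functoriality of $\SJ^{(\sss)}$, as $\SJ^{(\sss)}$ applied to the $\K$-difference quotient map $f^{[1]}$, which costs exactly one order of differentiability. Writing $\SJ^{(\sss)}f$ via (\ref{simplconj}) as a conjugate of $\times^{k+1}f$ and noting that difference quotients pass through the fixed $\K$-linear conjugators $M_\sss,N_\sss$ trivially makes this bookkeeping transparent.

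The main obstacle, and the point deserving the most care, is the analytic one in the inductive step: one must verify that forming a difference quotient over the ring $\B_k^\sss$ — where the scalar increment is an element of $\B_k^\sss$, not of $\K$ — really corresponds, under the conjugation (\ref{simplconj}), to forming finitely many $\K$-difference quotients of $f$, and that division by the $\B_k^\sss$-scalar increment is the correct operation even when that increment is a zero-divisor in $\B_k^\sss$ (which happens precisely when $\sss$ is singular, the case $\sss=0$ included). This is handled by first doing the computation for invertible increments — where $M_\sss$ is a genuine change of coordinates and everything reduces to the classical one-variable statement applied componentwise — and then extending by continuity and density of $(\B_k^\sss)^\times$, exactly as in the proof of Theorem \ref{SimplicialChainRule}. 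Once this reduction is in place, the rest is routine and the theorem follows by the stated induction.
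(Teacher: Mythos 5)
Your first step (identifying $\SJ^{(\sss)}\K$ with $\B_k^\sss$ via the conjugation by $M_\sss$, the Chinese Remainder Theorem for non-singular $\sss$, and density) is exactly the paper's Lemma \ref{SimpScalExtLemma}, and is fine. The problem is in your inductive step for the smoothness claim. You propose to show that the $\B_k^\sss$-difference quotient of $\SJ^{(\sss)}f$ is $\SJ^{(\sss)}$ applied to the $\K$-difference quotient $f^{[1]}$, ``which costs exactly one order of differentiability'', and to iterate. For this you must know that $f^{[1]}$ (and then its iterates) is again of class $\CC^{\langle k+m-1\rangle}$ whenever $f$ is $\CC^{\langle k+m\rangle}$ --- otherwise $\SJ^{(\sss)}(f^{[1]})$ is not even defined as a continuous map, let alone iterable. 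That statement mixes the cubic and the simplicial hierarchies: the simplicial classes are \emph{not} defined by iterating first-order difference quotients, and nothing in the paper (nor anything easy) gives ``$f\in\CC^{\langle n\rangle}$ implies $f^{[1]}\in\CC^{\langle n-1\rangle}$''; this is precisely the kind of assertion bound up with the open conjecture that simplicial smoothness implies cubic smoothness. The mechanism of \cite{Be08}, Theorem 6.3 that you invoke works under the \emph{cubic} hypothesis $\CC^{[k+m]}$ (that is the content of Theorems \ref{scalextTh} and \ref{scalextTh3}), but the present theorem assumes only $\CC^{\langle k+m\rangle}$. A secondary issue: even granting the step, iterating first-order differentiability over $\B_k^\sss$ would yield the cubic class $\CC^{[m]}$ over $\B_k^\sss$, whereas the statement asserts the simplicial class $\CC^{\langle m\rangle}$; you would still need to pass from cubic to simplicial over the ring $\B_k^\sss$ (Theorem \ref{cubictosimplicialTh} applied over that ring), which you do not mention.

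The paper avoids all of this and needs no induction on $m$: by Corollary \ref{LimitedCorollary}, ``$f$ is $\CC^{\langle m\rangle}$'' is equivalent to the existence of a continuous family $g_\ttt$ with $\times^{m+1}f\circ M_\ttt = M_\ttt\circ g_\ttt$ on the extended domain. This is a single identity between maps, preserved by any covariant product-preserving functor; applying $F=\SJ^{(\sss)}$ once (using that, for $f\in\CC^{\langle k+m\rangle}$, the family $g$ is $\CC^{\langle k\rangle}$, i.e.\ stays inside the simplicial hierarchy) yields the same identity over $F\K$, hence $Ff\in\CC^{\langle m\rangle}$ over $F\K$, and Lemma \ref{SimpScalExtLemma} identifies $F\K=\B_k^\sss$. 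If you want to keep your structure, replace the difference-quotient induction by this limited-expansion characterization; as written, the inductive step is a genuine gap.
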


\begin{proof} 
The proof from Theorem \ref{scalextTh} carries over to the
present situation, {\it mutatis mutandis}: let $F$ be a functor of the type in question (covariant
and preserving direct products). 
Recall from Corollary \ref{LimitedCorollary} that $f$ is $\CC^{<m>}$ if and only if
there exists a continuous map $(\vvv,\ttt) \mapsto g_\ttt(\vvv)$ such that
$$
 \times^{m+1} f \circ M_\ttt = M_\ttt \circ g_\ttt \,  .
$$
If $f$ is $\CC^{<m+k>}$, then $g$ is actually of class $\CC^{\langle k \rangle}$ and hence we
can apply the functor $F = \SJ^{(\sss)}$
to this relation; we obtain a relation of the same kind, where $f,g_\ttt$ are replaced by
$Ff, Fg_\ttt$, and $\K$ by $F\K$, and $V,W$ by their scalar extensions $V_{F\K}$,
$W_{F\K}$. This proves that $Ff$ is $\CC^{<m>}$ over $F\K$.
It remains to determine the ring $F\K$. This is the content of  the following lemma:

\begin{lemma}\label{SimpScalExtLemma}
The rings $\B^\sss_k$ and $\SJ^{(\sss)}\K$ are canonically isomorphic. More precisely,
if $b_0,\ldots,b_k$ denotes the standard basis in $\SJ^{(\sss)} \K = \K^{k+1}$ and
$c_0,\ldots,c_k$ the basis of $\B_k$ given by (the classes of) the polynomials
$$
c_j (X) = X (X-s_1) \cdot \ldots \cdot (X-s_j),
$$
then $\B_k^\sss \to \SJ^{(\sss)}\K$, $c_j \mapsto b_j$ is a ring isomorphism.
In particular, for $\sss=0$, the standard bases of these rings correspond to each other.
\end{lemma}

\nin {\em Proof of the Lemma.}
Once again it suffices to prove the claim for non-singular $\sss$. Indeed, the map
in question is always a  $\K$-linear bijection. Hence, if we have shown that it is a ring
isomorphism for non-singular $\sss$, then, 
since the products on both sides depend continuously on $\sss$, by density of the non-singular
elements this map will be a ring isomorphism for all $\sss$. 

For non-singular $\sss$, since $X-s_i$ and $X-s_j$ are then coprime for
$i \not= j$, by the Chinese Remainder Theorem,
$\B_k^\sss$ is uniquely isomorphic to the direct product of rings
$\prod_{i=0}^k \K[X]/(X-s_i) = \K^{k+1}$.
Thus there is a unique $\K$-basis $e_0,\ldots,e_k$ of $\B_k^\sss$ such that
$e_i \cdot e_j = \delta_{ij} e_i$. In fact, $e_i$ is the class of the polynomial $E_i(X)$ of
degree $k$ satisfying
$$
\forall j=0,\ldots,k: \quad \quad 
E_i(s_j) = \delta_{ij} .
$$
These polynomials are determined as follows:
let $A := A_\sss := (a_{ij})_{i,j=0,\ldots,k}$ be the change of basis  matrix, defined by
$
c_j = \sum_{i=0}^k a_{ij} e_i \, .
$
It follows that
$$
a_{ij} = \sum_{n=0}^k a_{jn} E_n(s_i) = c_j(s_i)=
s_i(s_i-s_1) \cdot \ldots \cdot (s_i - s_j) .
$$
Note that these are exactly the coefficients of the 
 matrix $M_\sss$ given by Equations (\ref{M-matrix}), resp.\
(\ref{M-matrix'}), whence $A_\sss = M_\sss$.

On the other hand,  as seen in the proof of Theorem
\ref{SimplicialChainRule}, the simplicial $\sss$-extension of the product map
$m:\K \times \K \to \K$ is conjugate to a direct product $\times^{k+1} m$ via
$$
\SJ^{(\sss)} m = N_\sss \circ \times^{k+1} m \circ M_\sss
$$
where $N_\sss = (M_\sss)\inv$.
Therefore the new basis $f_j:= N_\sss(b_j)$ in $\K^{k+1}=\SJ^{(\sss)}\K$
is characterized by the idempotent relations
$f_j \cdot f_i = \delta_{ij} f_j$.
Since $A_\sss = M_\sss$, the bases $e_j$ and $f_j$ correspond to each other
under the bijection from the lemma, and they satisfy the same
multiplication table.
This proves the lemma and the theorem for non-singular $\sss$
and hence for all $\sss$.
\end{proof}

We add a few remarks on the structure of the ring:
there are projections $\B_{k+1} \to \B_k$, hence by composition $\B_k \to \B_j$ for
$j \leq k$, 
but  these projections do not have
a section, except for  $j=0$. 
As to the automorphism group, 
if $\sss$ is non-singular, there is of course an action of the symmetric group
on $\B_k \cong \K^{k+1}$, permuting the roots $s_i$.
This action degenerates for singular $\sss$, and 
for $\sss = 0$ survives by a sign:
namely, for $\sss=0$, every dilation of $\K$ acts on the polynomial algebra
$\K[X]$, and this action descends to $\B_k$.

\subsection{Embedding of simplicial ring extensions into cubic ones}

Recall that, if $f$ is $\CC^{[k]}$, then $f$ is $\CC^{\langle k \rangle}$, and the $\sss$-extended simplicial
divided differences can be embedded into the cubic $\ttt$-extension (Theorem \ref{cubictosimplicialTh}).
This means that, on the ring level, the rings $\B^{(\sss)}_k$ can be embedded into the
algebras $\A^{(\ttt)}_k$. In the following, we prove a purely algebraic version of this
result:

\begin{theorem}\label{RingEmbeddingTheorem}
Fix $\sss \in \K^{k+1}$, and  assume that $s_0 =0$.
Let $\ttt \in \K^{2^k - 1}$ be such that for all $i=0,\ldots,k$,
$$
t_{ \{ i \} }= t_i = s_{k-i} - s_{k-i-1}, \quad
t_{\{ i,i+1 \}}= t_{i,i+1} = 1, \quad
t_J = 0  \mbox{ else.  } 
$$
Then the subring $\langle X_k \rangle$ of $\A^{(\ttt)}_k$ generated by the class of the polynomial
$X_k$ is isomorphic to $\B_k^\sss$.
\end{theorem}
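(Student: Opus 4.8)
The strategy is to identify the subring $\langle X_k\rangle\subseteq\A_k^{(\ttt)}$ with an explicit quotient of $\K[X]$ and then recognize that quotient as $\B_k^\sss$. First I would determine the minimal polynomial of the class of $X_k$ in $\A_k^{(\ttt)}$ for the specified $\ttt$. The plan is to show by induction on $j$ that, with this choice of parameters, one has in $\A_k^{(\ttt)}$ the identities
\begin{equation*}
X_k^{\,j+1} = (X_k - t_1)(X_k - t_1 - t_2)\cdots(X_k - t_1 - \cdots - t_j)\cdot X_k \quad\text{times a unit,}
\end{equation*}
more precisely that $X_k$ satisfies a monic degree-$(k+1)$ relation whose roots are $0,\,t_1,\,t_1+t_2,\,\ldots,\,t_1+\cdots+t_k$. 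Since $t_i = s_{k-i} - s_{k-i-1}$, the partial sums $t_1 + \cdots + t_j$ telescope to $s_k - s_{k-j}$, so the root set is exactly $\{0, s_k - s_{k-1}, s_k - s_{k-2}, \ldots, s_k - s_0\} = \{\,s_k - s_i : i = 0,\ldots,k\,\}$ (using $s_0 = 0$, note $s_k - s_k = 0$ is among them). After the affine shift $X \mapsto X - s_k$ — which is an automorphism of $\K[X]$ — this is precisely the root set $\{-s_0,\ldots,-s_k\}$, matching $\B_k^\sss = \K[X]/(X(X-s_1)\cdots(X-s_k))$ up to the harmless sign/shift reflected in Theorem \ref{cubictosimplicialTh}.

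The key computational step is the induction establishing the relation $X_k^{j+1} \equiv (\text{affine polynomial of degree } j+1 \text{ in } X_k)$ inside $\A_k^{(\ttt)}$. Here I would use the defining relation $P_k = X_k^2 - \sum_{J\subset\{1,\ldots,k-1\}} t_{J\cup\{k\}} X_J X_k$, which with the present $\ttt$ collapses to $X_k^2 = t_k X_k + t_{k-1,k} X_{k-1} X_k = t_k X_k + X_{k-1}X_k$ since only $t_{\{k\}}$ and $t_{\{k-1,k\}}$ are nonzero among the relevant coefficients. Iterating, each new power of $X_k$ pulls in one more factor $X_j$ from the chain $t_{\{j,j+1\}} = 1$, and the relations $P_j$ for $j < k$ (which read $X_j^2 = t_j X_j + X_{j-1}X_j$, with $X_0 := 1$ and $t_{0,1}$ absent so $X_1^2 = t_1 X_1$) let me reduce $X_{k-1}X_k, X_{k-2}X_{k-1}X_k,\ldots$ back down. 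The bookkeeping is to track the coefficient of each monomial $X_{k-i}X_{k-i+1}\cdots X_k$ and show the recursion is exactly the one satisfied by the elementary-symmetric-type coefficients of $\prod(X_k - (t_1+\cdots+t_\ell))$. Once the degree-$(k+1)$ relation is in hand, one knows $\langle X_k\rangle$ is a quotient of $\K[X]/(\text{that polynomial})$; since that polynomial has the same roots as $c_{k+1}$ after shifting and $\A_k^{(\ttt)}$ is free of rank $2^k$ over $\K$, I would conclude the quotient is not proper by a dimension/linear-independence check — exhibiting $1, X_k, X_k^2, \ldots, X_k^k$ as $\K$-linearly independent in $\A_k^{(\ttt)}$, using the canonical basis $\{X_J\}$ from Lemma \ref{RingRecursionLemma} — and then the map $\K[X]/((X)(X-s_1)\cdots(X-s_k)) \to \langle X_k\rangle$ is an isomorphism.

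The main obstacle will be the inductive identity for $X_k^{j+1}$: the off-diagonal structure of $\ttt$ (a whole chain $t_{i,i+1}=1$, not a single one) means the reduction mixes several of the generating relations $P_1,\ldots,P_k$ at once, so the induction hypothesis must carry enough information about the mixed monomials $X_{k-i}\cdots X_k$, not just about pure powers of $X_k$. A clean way to organize this is to first prove the auxiliary relation $X_j\cdot X_{j}X_{j+1}\cdots X_k = (t_1 + \cdots + t_j)\, X_j X_{j+1}\cdots X_k$ for each $j$ (an eigenvector-type statement), which isolates exactly the telescoping sum $t_1 + \cdots + t_j = s_k - s_{k-j}$, and then assemble these to read off the minimal polynomial. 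Alternatively — and this is probably cleanest given what is already proved — one can bypass the direct computation entirely: apply the functor $T^{(\ttt)}$, interpreted via Theorem \ref{scalextTh3} as scalar extension to $\A_k^{(\ttt)}$, to the base ring $\K$, use the embedding of simplicial divided differences into cubic ones from Theorem \ref{cubictosimplicialTh} (with this very $\ttt = \ttt(\sss)$) to realize $\SJ^{(\sss)}\K$ as a subfunctor, and invoke Lemma \ref{SimpScalExtLemma} identifying $\SJ^{(\sss)}\K$ with $\B_k^\sss$; the subring generated by $X_k$ is then exactly the image of this sub-scalar-extension. I would present the algebraic computation as the primary proof and remark on the functorial shortcut.
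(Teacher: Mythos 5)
Your overall architecture coincides with the paper's: with this $\ttt$ the defining relations collapse to $X_1^2\equiv t_1X_1$ and $X_j^2\equiv t_jX_j+X_{j-1}X_j$, one identifies $\langle X_k\rangle$ as a free $\K$-module of rank $k+1$ spanned by the chain monomials $1,X_k,X_kX_{k-1},\ldots,X_kX_{k-1}\cdots X_1$ (equivalently, your triangular independence of $1,X_k,\ldots,X_k^k$ in the basis $X_J$), hence $\langle X_k\rangle\cong\K[X]/(P)$ with $P$ of degree $k+1$, and one computes $P$. However, your two key computational claims are wrong as stated. Multiplication by $X_k$ sends $X_kX_{k-1}\cdots X_{k-i}$ to $(t_k+t_{k-1}+\cdots+t_{k-i})\,X_kX_{k-1}\cdots X_{k-i}+X_{k-i-1}X_{k-i}\cdots X_k$, so it is bidiagonal on the chain basis and the relation satisfied by $X_k$ is $X(X-t_k)(X-(t_k+t_{k-1}))\cdots(X-(t_k+\cdots+t_1))$: the roots are the partial sums taken from the \emph{top} index downwards, not $0,t_1,t_1+t_2,\ldots$ as you assert. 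Already for $k=2$ one finds $X_2(X_2-t_2)(X_2-(t_2+t_1))\equiv 0$, with intermediate root $t_2$, not $t_1$. Likewise your auxiliary ``eigenvector-type'' relation $X_j\cdot X_jX_{j+1}\cdots X_k=(t_1+\cdots+t_j)X_jX_{j+1}\cdots X_k$ is false: for $j=2$, $k=3$ one has $X_2^2X_3\equiv t_2X_2X_3+X_1X_2X_3$, and $X_1X_2X_3$ is an independent basis vector, so it cannot be absorbed into $t_1X_2X_3$ (the true eigen-relations involve the full \emph{lower} chain, $X_j\cdot X_1\cdots X_k=(t_1+\cdots+t_j)X_1\cdots X_k$, which is not what your induction needs). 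So the induction as you set it up would not close.

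The repair is exactly the paper's computation: from $X_kX_{k-1}\equiv X_k^2-t_kX_k$ one gets $X_k(X_{k-1}-c)\equiv(X_k-(t_k+c))X_k$, so multiplying the rank-$(k-1)$ relation for $X_{k-1}$ by $X_k$ shifts its nonzero roots by $t_k$ and adjoins the root $0$. With the intended $\ttt(\sss)$ (the theorem's indexing has an off-by-one glitch; the proof makes clear that $t_k=s_1$ and more generally $t_k+\cdots+t_{k-j+1}=s_j$), the roots come out as exactly $0,s_1,\ldots,s_k$, i.e.\ $P$ is proportional to $X(X-s_1)\cdots(X-s_k)$ on the nose; the affine shift $X\mapsto X-s_k$ you introduce only compensates your reversed bookkeeping and should disappear. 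Two further comparisons: the paper treats non-singular $\sss$ via simple roots and then handles singular $\sss$ by a density/continuity argument; if you instead prove the displayed factorization as a polynomial identity in $\A^{(\ttt)}_k$ (valid for every $\ttt$ of this shape) and combine it with your linear-independence check, you get the isomorphism uniformly in $\sss$ with no density argument, which would be a small genuine improvement. Your functorial shortcut via Theorems \ref{cubictosimplicialTh} and \ref{scalextTh3} and Lemma \ref{SimpScalExtLemma} is indeed available; the paper itself notes it right after the proof as an alternative, ``less algebraic'' argument.
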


\begin{proof}
By choice of $\ttt$, $\A_k^{\ttt}$ is the polynomial ring $\K[X_1,\ldots,X_k]$, quotiented by
the relations
$$
X_1^2 \equiv t_1 X_1, \quad \forall j=2,\ldots,k: \, X_i^2 \equiv X_{i-1} X_i + t_i X_i \, .
$$
Let $\B \subset \A_k^{\ttt}$ be the $\K$-submodule 
$$
\B:= \K \oplus \K X_k \oplus \K X_k X_{k-1}
\oplus \K  X_k X_{k-1}X_{k-2} \oplus \ldots \oplus \K X_k X_{k-1} \cdots X_1 \, .
$$
We claim that $\langle X_k \rangle = \B$.
Indeed, by an easy induction it follows from the relations written above that, for
all $j,\ell \in \N$ there exist constants $c_1,\ldots,c_\ell \in \K$ such that
$$
X_j^\ell \equiv  X_j  X_{j-1} \cdots X_{j - \ell +1} +
c_1 X_j  X_{j-1} \cdots X_{j - \ell} + \ldots +
c_{\ell - 1} X_j,
$$
whence $X_k^\ell \in \B$, whence $\langle X_k \rangle \subset \B$.
On the other hand,
$X_k X_{k-1} \equiv  X_k^2 - t_k X_k $ belongs to $\langle X_k \rangle$, hence also
$X_k X_{k-1} X_{k-2} \equiv  X_k^3 - c_1 X_k X_{k-1} - c_2 X_k$ belongs to 
$\langle X_k \rangle$, and so on, whence the other inclusion and hence
$\langle X_k \rangle = \B$.

Let $(P)$ be the kernel of the surjective homomorphism
$\phi: \K[X] \to \bB$ sending $X$ to $X_k$, so that
$\bB \cong \K[X]/(P)$. Now we show that this establishes an isomorphism of
$\B$ with $\bB_k^\sss$. Again, by density, it will suffice to prove this for non-singular $\sss$, since
the products on both sides depend continuously on $\sss$.
For reasons
of dimension, 
$P$ is a polynomial of degree $k$. We show by induction:
{\em the polynomial $P$ has simple roots in $\K$, equal to
$0,s_1,\ldots,s_k$, hence is proportional to
$X(X-s_1)\cdot \ldots \cdot (X-s_k)$.}
Indeed, for $k=1$ we have $X_1 (X_1 - t_1) = 0$, hence $0$ and $t_1$ are roots of $P$,
and they are simple since $P$ is of degree two and  $t_1$ is invertible. 
Assume the claim proved at rank $k-1$, i.e.,
$$
X_{k-1}(X_{k-1} - s_1) \cdot \ldots \cdot (X_{k-1} - s_{k-1}) \equiv 0 \, .
$$
We multiply by $X_k$, and note that (using the defining relations of $\A_k^\ttt$)
$$
X_k(X_{k-1} - s_j) \equiv  X_k^2 - t_k X_k - s_j X_k =
(X_k - (t_k + s_j))X_k \, ,
$$
so that we get
\begin{eqnarray*}
0 & \equiv & X_k X_{k-1}(X_{k-1} - s_1) \cdot \ldots \cdot (X_{k-1} - s_{k-1}) 
\cr & \equiv &
(X_k - t_k) X_k (X_{k-1} - s_1) \cdot \ldots \cdot (X_{k-1} - s_{k-1}) 
\cr & \equiv & \ldots \cr
& \equiv &
 (X_k - t_k) (X_k - (t_k +s_1)) \cdot \ldots \cdot (X_k - (t_k +s_{k-1})) X_k \, .
\end{eqnarray*}
Thus, at rank $k$, $P$ has necessarily as roots
$0,t_k,t_k+s_1,\ldots,t_k + s_{k-1}$.
\end{proof}

The embedding $\B_k^{(\sss)} \subset \A_k^{(\ttt(\sss))}$ just constructed coincides in fact with
the embedding of $f^{\langle k \rangle}$ into $f^{[k]}$ constructed in Theorem \ref{cubictosimplicialTh},
which could have been used to give another (less algebraic) proof of the preceding
result.

\section{The Weil functors}

\subsection{Manifolds and bundles of class $\CC^{\langle k \rangle}$}
We define {\em manifolds of class $\CC^{[k]}$}  as in \cite{BGN04} or
in \cite{Be08}, Section 2. 
The definition of {\em manifolds of class $\CC^{\langle k \rangle}$} follows the same pattern.
 We briefly recall the  relevant definitions. 
Fix a topological $\K$-module $V$ as ``model space'', and let $M$ be a topological space
(which may be Hausdorff or not). 
We say that
 ${\mathcal A} = (\phi_i,U_i)_{i \in I}$ is an {\em atlas of $M$ (of class $\CC^{\langle k \rangle}$)}, 
if $I$ is some index set, $(U_i)_{i\in I}$ an open covering of $M$ and
$\phi_i:U_i \to V_i$ bijections with open sets $V_i \subset V$ such that the transition functions
$$
\phi_{ij}:V_{ji} \to V_{ij}, \quad v \mapsto \phi_i(\phi_j^{-1}(v)),
\quad \mbox{ where }
V_{ij} := \phi_i(U_i \cap U_j),
$$
are of class $\CC^{\langle k \rangle}$. On 
$$
S := \{ (i,x) \in I \times V  \mid \,x \in  V_i  \}
$$
define an equivalence relation $(i,x) \sim (j,y)$ iff $\phi_i\inv(x)=\phi_j\inv(x)$ iff 
$\phi_{ji}(x)=y$. Then $S/\sim \to M$, $[i,x] \mapsto \phi_i\inv(x)$ is a bijection, and
a set $Z \subset M$ is open if, and only if, for all $i \in I$, the set
$Z \cap U_i$ is open in $M$, if and only if, for all $i \in I$, the set 
$$
Z_i := \phi_i (Z \cap U_i) = \{ x \in V \mid \, [i,x] \in Z \}
$$
is open in $V$.  This can be rephrased by saying that the topology of $M$ is
recovered as the quotient topology of the canonical projection
$S \to M=S/\sim$, where $S \subset I \times V$ carries the topology
induced from the product $I \times V$, where $I$ carries the discrete topology.
As may be checked directly, all of these constructions admit a converse (see \cite{St51},
pp.\ 14 -- 15, where essentially the same construction is described in a slightly
different context). We summarize:

\begin{proposition}
A $\CC^{\langle k \rangle}$-manifold with atlas, indexed by $I$ and modelled on
 a topological $\K$-module $V$, is
equivalent to the following data:
a collection of open sets $V_{ij}\subset V$ such that
$V_i:=V_{ii}$ is non-empty, and a collection of $\CC^{\langle k \rangle}$-diffeomorphisms
$\phi_{ij}:V_{ji}\to V_{ij}$ such that $\phi_{ii}=\id_{V_i}$ and
$\phi_{ij}\circ \phi_{jk}=\phi_{ik}$ (on $V_{kj} \cap \phi_{jk}\inv(V_{ji})$);
the manifold is then given by $M=S/\sim$ with  equivalence relation and quotient
topology as described above; the atlas is given by
$U_i:= (\{ i \} \times V_i)/\sim$ and
$\phi_i:U_i \to V_i$, $[i,x] \mapsto x$.
\end{proposition}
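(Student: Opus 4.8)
The plan is to treat this as the standard dictionary between ``manifold with atlas'' and ``gluing (cocycle) data'', and then to check that the two passages between these are mutually inverse. The only ingredient specific to our setting is that $\CC^{\langle k \rangle}$-maps compose (Theorem \ref{SimplicialChainRule}) and that $\CC^{\langle k \rangle}$-diffeomorphisms are closed under composition and inversion, so that all the transition functions produced below are again of class $\CC^{\langle k \rangle}$; everything else is point-set bookkeeping.

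First I would spell out the passage from an atlas $\mathcal{A}=(\phi_i,U_i)_{i\in I}$ to gluing data: put $V_{ij}:=\phi_i(U_i\cap U_j)$, so that $V_{ij}$ is open in $V$, $V_{ij}\subset V_i:=\phi_i(U_i)$, and $V_i$ is nonempty; and put $\phi_{ij}:=\phi_i\circ\phi_j\inv:V_{ji}\to V_{ij}$, which is a $\CC^{\langle k \rangle}$-diffeomorphism by the very definition of an atlas. The relations $\phi_{ii}=\id_{V_i}$ and $\phi_{ij}\circ\phi_{jk}=\phi_{ik}$ on $V_{kj}\cap\phi_{jk}\inv(V_{ji})$ are then immediate from cancelling $\phi_j\inv\circ\phi_j$, this set being exactly where all three composites are defined.

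Next I would carry out the reconstruction. Starting from data $(V_{ij},\phi_{ij})$ as in the statement, I would define $\sim$ on $S=\{(i,x)\in I\times V\mid x\in V_i\}$ by $(i,x)\sim(j,y)\iff x\in V_{ij}$ and $\phi_{ji}(x)=y$, and check that it is an equivalence relation: reflexivity from $\phi_{ii}=\id_{V_i}$; symmetry from the identity $\phi_{ij}\circ\phi_{ji}=\phi_{ii}=\id$ (the instance $k=i$ of the cocycle relation, whose domain collapses to $V_{ij}$); and transitivity directly from the cocycle relation, where, in a chain $(i,x)\sim(j,y)\sim(l,z)$, the hypothesis $\phi_{ji}(x)\in V_{jl}$ places $x$ exactly in $V_{ij}\cap\phi_{ji}\inv(V_{jl})$, which is where $\phi_{lj}\circ\phi_{ji}=\phi_{li}$ holds. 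Then I would give $S$ the topology induced from $I\times V$ with $I$ discrete, and $M:=S/\sim$ the quotient topology, and verify: $U_i:=(\{i\}\times V_i)/\sim$ is open (its $\sim$-saturation in $S$ meets the slice $\{j\}\times V_j$ in the open set $\{j\}\times V_{ji}$, since $V_{ji}=\phi_{ji}\inv(V_{ij})$ is open); the map $\phi_i:U_i\to V_i$, $[i,x]\mapsto x$, is well defined, bijective, and a homeomorphism (continuity and openness both read off the quotient-topology description); and $(\phi_i,U_i)_{i\in I}$ is a $\CC^{\langle k \rangle}$-atlas whose transition function $\phi_i\circ\phi_j\inv$ is, by construction, the given $\phi_{ij}$, hence $\CC^{\langle k \rangle}$.

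Finally I would check that the two passages are inverse: running an atlas through both constructions produces a manifold canonically $\CC^{\langle k \rangle}$-isomorphic to the original via $[i,x]\mapsto\phi_i\inv(x)$ (the bijection already noted in the excerpt, and a homeomorphism since both spaces carry the common quotient topology over $S$); and running gluing data through both constructions returns the same $V_{ij}$ and $\phi_{ij}$ on the nose. I expect the only genuine (and still minor) obstacle to be the bookkeeping of domains of definition: confirming that the cocycle relation holds precisely on $V_{kj}\cap\phi_{jk}\inv(V_{ji})$, that this is exactly what forces $\sim$ to be transitive, and that the compatibility $\phi_{ij}(V_{jk}\cap V_{ji})=V_{ij}\cap V_{ik}$ — tacitly used when identifying $U_i\cap U_j$ with $V_{ij}$ under $\phi_i$ — holds. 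None of this requires anything beyond point-set topology together with closure of $\CC^{\langle k \rangle}$ under composition, which is why the excerpt is content to assert that ``all of these constructions admit a converse''.
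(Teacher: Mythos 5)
Your proposal is correct and follows essentially the same route as the paper, which sketches the passage from atlas to gluing data and back (the set $S$, the relation $\sim$, the quotient topology) and then simply asserts that ``all of these constructions admit a converse'', citing Steenrod; you merely fill in the point-set and cocycle bookkeeping, invoking Theorem \ref{SimplicialChainRule} for closure of $\CC^{\langle k \rangle}$-maps under composition exactly as intended. The only blemish is the harmless slip ``$V_{ji}=\phi_{ji}\inv(V_{ij})$'' (openness of $V_{ji}$ is already part of the given data), which does not affect the argument.
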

 
For $M$ to  be Hausdorff it is necessary, but not sufficient that  the model space $V$ be Hausdorff.
One may always shrink chart domains since obviously the restriction of
a $\CC^{\langle k \rangle}$-map to a smaller open set is again $\CC^{\langle k \rangle}$;
however, if $\K$ is not a field, one has to be careful with unions of chart domains
(see \cite{Be08}, 2.4). 
One could assume that the atlas is maximal (in the usual sense), 
but this will not be important in the sequel. 

\begin{theorem}\label{FunctorTheorem}
Let $F$ be one of the functors $\hat T^{(\ttt)}$, resp.\ $\SJ^{(\sss)}$ 
(of degree $k$), let
$M$ be a manifold with atlas of class $\CC^{\langle \ell \rangle}$ (with $\ell \geq k$), and
retain notation from above.
Then the data $(FV,(F(V_{ij}),F(\phi_{ij}))_{i,j\in I})$ define a manifold $FM$ with atlas
$F{\mathcal A}$ which is of class   $\CC^{\langle \ell - k \rangle}$ over the ring $F\K$
(and hence also over $\K$).
There is a canonical projection $\pi:FM \to M$.
The construction is functorial in the category of manifolds with atlas.
\end{theorem}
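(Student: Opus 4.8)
The plan is to reduce the statement to the cocycle description of manifolds-with-atlas from the preceding Proposition, using only that $F$ is a product-preserving covariant functor and that it lowers the differentiability order by $k$ under scalar extension; almost everything is then formal bookkeeping.

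First I would record a small compatibility fact about $F\in\{\hat T^{(\ttt)},\SJ^{(\sss)}\}$: the extended domain $FU$ is open in $FV$ (Definitions \ref{cubicDef} and \ref{SJDef}), and from the explicit formulas one reads off that $U\mapsto FU$ commutes with finite intersections, $F(U_1\cap U_2)=FU_1\cap FU_2$, and that $F(f|_{U'})=(Ff)|_{FU'}$ for open $U'\subset U$. Combined with functoriality of $F$ (Theorem \ref{SimplicialChainRule}, resp.\ the functoriality of $T^{(\ttt)}$ noted after its definition) this shows: if $\phi_{ij}:V_{ji}\to V_{ij}$ is a $\CC^{\langle\ell\rangle}$-diffeomorphism then $F\phi_{ij}:FV_{ji}\to FV_{ij}$ is a bijection with inverse $F\phi_{ji}$, and the cocycle relations $\phi_{ii}=\id_{V_i}$ and $\phi_{ij}\circ\phi_{jk}=\phi_{ik}$ on $V_{kj}\cap\phi_{jk}\inv(V_{ji})$ pass verbatim to $F\phi_{ii}=\id_{FV_i}$ and $F\phi_{ij}\circ F\phi_{jk}=F\phi_{ik}$ on $F(V_{kj}\cap\phi_{jk}\inv(V_{ji}))=F(V_{kj})\cap(F\phi_{jk})\inv(F(V_{ji}))$.

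Next I would bring in the regularity. Applying the appropriate scalar-extension theorem with $m:=\ell-k$: since $\phi_{ij}$ is of class $\CC^{\langle k+m\rangle}$ over $\K$, Theorem \ref{SimpScalExtTheorem} (for $\SJ^{(\sss)}$), resp.\ Theorem \ref{scalextTh3} (for $\hat T^{(\ttt)}$), yields that $F\phi_{ij}$ is of class $\CC^{\langle\ell-k\rangle}$ over the ring $F\K$, which by Lemma \ref{SimpScalExtLemma} (resp.\ the construction preceding Theorem \ref{scalextTh3}) is $\B_k^\sss$, resp.\ $\A_k^{(\ttt)}$; a fortiori it is $\CC^{\langle\ell-k\rangle}$ over $\K$, since the $\K$-module structure on $FV$ is the restriction of scalars of its $F\K$-module structure. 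Hence $(FV,(FV_{ij},F\phi_{ij})_{i,j})$ is exactly a cocycle datum of class $\CC^{\langle\ell-k\rangle}$ over $F\K$ in the sense of the Proposition, and therefore defines a manifold-with-atlas $FM=S_F/\!\sim$ with $S_F=\{(i,x)\in I\times FV\mid x\in FV_i\}$, $(i,x)\sim(j,y)$ iff $F\phi_{ji}(x)=y$, equipped with the quotient topology, charts $U_i^F:=(\{i\}\times FV_i)/\!\sim$ and $\phi_i^F:[i,x]\mapsto x$; this atlas is $F\mathcal A$. For the projection, each $F$ carries a natural base-point map $\pr:FV\to V$ (first component: $\vvv\mapsto v_0$ for $\SJ^{(\sss)}$, $(x,v,t,\ldots)\mapsto x$ for $\hat T^{(\ttt)}$), with $\pr(FU)\subset U$ and $\pr\circ Ff=f\circ\pr$; thus the local maps $\pi_i:U_i^F\to U_i$, $[i,x]\mapsto\phi_i\inv(\pr(x))$, agree on overlaps (because $\phi_{ij}\circ\pr=\pr\circ F\phi_{ij}$) and glue to $\pi:FM\to M$, which is continuous since in charts it is the continuous map $\pr$. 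Finally, a morphism $f:M\to N$ of manifolds-with-atlas is given in charts by $\CC^{\langle\ell\rangle}$-maps; setting $Ff:FM\to FN$ to be $F$ of those local representatives, well-definedness follows from the compatibility fact and the chain rule, $\CC^{\langle\ell-k\rangle}$-regularity over $F\K$ from the scalar-extension theorems, $F\id_M=\id_{FM}$ and $F(g\circ f)=Fg\circ Ff$ from functoriality of $F$ in charts, and $\pi_N\circ Ff=f\circ\pi_M$ from naturality of $\pr$; so $M\mapsto FM$ is a functor.

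The hard part is not analytic here — all the analytic input (product-preserving functoriality, the drop of differentiability order by $k$) is already supplied by the cited results, so the proof is essentially a formal assembly. The one point requiring genuine care, and which I would isolate as the small lemma of the first step, is the compatibility of $F$ with intersections and restrictions of chart domains: without $F(U_1\cap U_2)=FU_1\cap FU_2$ one cannot transfer the cocycle identities (which hold only over the overlaps $V_{kj}\cap\phi_{jk}\inv(V_{ji})$) nor identify the image of such an overlap under $F\phi_{jk}$. A secondary, purely bookkeeping caveat — relevant only when $\K$ is not a field — is that one must remain within the category of manifolds \emph{with atlas} and attempt no gluing of chart domains here; atlas-independence in the case $\sss=0$ is the separate content of Theorem \ref{WeilFunctorTheorem}.
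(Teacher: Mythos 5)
Your proposal is correct and follows essentially the same route as the paper's (much terser) proof: verify the cocycle data via functoriality of $F$ on open sets and maps, invoke the scalar-extension theorems to get class $\CC^{\langle \ell-k\rangle}$ over $F\K$, use the natural base projection for $\pi$, and note functoriality in charts. Your explicit isolation of the compatibility $F(U_1\cap U_2)=FU_1\cap FU_2$ and with preimages/restrictions is a useful spelling-out of what the paper subsumes under ``functoriality of $F$ on the level of open sets,'' but it is not a different argument.
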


\begin{proof}
By functoriality, the data  $(FV,(F(V_{ij}),F(\phi_{ij}))_{i,j\in I})$ satisfy again
the condition  of the preceding proposition, and hence define a manifold $FM$.
As shown in the preceding chapter, the functors $F$ admit a natural ``base projection''
$\pi:F (V_{ij}) \to V_{ij}$, i.e., $\pi \circ F(\phi_{ij}) = \phi_{ij} \circ \pi$,
and hence $\pi$ gives rise to a globally well-defined map $FM \to M$.
It is clear from the definition of morphisms  of manifolds, 
and from the functoriality of $F$ on the level
of open sets, that the construction is functorial.
Finally, again by results of the preceding chapter, $F(\phi_{ij})$ is smooth over the
ring $F\K$, hence $FM$ is a manifold not only over $\K$, but also over $F\K$.
\end{proof}

In general, not only the atlas $F {\mathcal A}$, but also the underlying set
 of the manifold $FM$ will depend on the atlas ${\mathcal A}$.
This is best understood by looking at the following example:

\begin{example}  
Let $F = \widehat T^{(1)}$, that is,
$$
FU = \{ (x,v) \mid \, x \in U, x+v \in U \}, \quad 
Ff(x,v)=(f(x),f(x+v) - f(x)) \, .
$$
As we have already remarked, this functor is conjugate to the direct product
functor, via the simple change of variables $(x,y):=(x,x+v)$. 
This change of variables is the chart formula of a well-defined embedding 
$\iota: FM \to M \times M$, $[i;x,v] \mapsto [i;x,x+v]$. 
In general, $\iota$ will  not be a bijection: 
the fiber over the point $p\in M$ is the set of all $q \in M$ such that, for some $i \in I$, the points
$p$ and $q$ belong to a common chart domain $U_i$; let us temporarily call this 
set the {\em star of $p$}.
If the atlas consists of ``small'' charts, then the star of $p$ may very well be a proper
subset of $M$. If we choose a maximal atlas, including ``very big'' charts,
then under quite general conditions the star of $p$ equals $M$ (in the Hausdorff case, e.g.,
over $\K=\R$, we may work with non-connected charts; in this case one might distinguish
between a ``connected star'' and a general one). 
For instance, for finite-dimensional real
projective spaces, Grassmannians and reductive Lie groups, the ``connected star'' will
be equal to $M$, and 
hence $FM = M \times M$ in these cases.
Note that, even when using a ``small'' atlas, functoriality implies that, e.g.,
if $G$ is a Lie group, then so is $FG$. One may think of the Lie
group $FG$ then as some ``open neighborhood group'' of the diagonal group $\Delta(G \times G)$.
\end{example}

\subsection{Locality} \label{sec:locality}
The example we have just discussed is of ``non-local'' nature: in the language of \cite{KMS},
it corresponds to a product-preserving functor that
stems from a formally-really algebra ($F \R \cong \R \times \R$, in the example) which is
not a Weil algebra. The ``non-locality'' is related to the dependence on the atlas. 
On the other hand, we know that
the fibers of the tangent bundle $TM$ and of its iterates $T^k M$ are independent of
the chosen atlas of $M$. This corresponds to ``locality'' of the tangent functor, and to the fact
that $T\K = \K[X]/(X^2)$ is a Weil algebra.

\begin{definition}
Let $F$ be one of the functors $\hat T^{(\ttt)}$, resp.\ $\SJ^{(\sss)}$, defined as in Chapter 1.
We say that $F$ is {\em local} if,
for an open set $U$ in the model space $V$,
$$
F U = U \times V^{2^k-1}, \quad \mbox{ respectively } \quad FU = U \times V^k  .
$$
\end{definition}

\begin{theorem}\label{WeilFunctorTheorem}
\begin{enumerate}[label=\roman*\emph{)},leftmargin=*]
\item
The functor $\SJ^{(\sss)}$ is local for $\sss =0$.
\item
The functor $\hat T^{(\ttt)}$ is local if $t_J =0$ whenever $J$ is of cardinality one
(i.e., if $t_i = 0$ for $i=1,\ldots,k$).
\end{enumerate}
\end{theorem}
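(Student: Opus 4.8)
For part (i), the plan is to read off locality directly from the formula for the extended domain. Setting $\sss = 0$ in Definition \ref{SJDef}, every product $\prod_\ell (s_i - s_\ell)$ occurring there is a product of vanishing factors $s_i - s_\ell = 0$; hence each of the $k$ constraints defining $\SJ^{(\sss)} U$, namely $v_0 + \sum_{j=1}^i \bigl(\prod_\ell (s_i - s_\ell)\bigr) v_j \in U$ for $i=1,\ldots,k$, collapses to the single constraint $v_0 \in U$. Therefore $\SJ^{(0,\ldots,0)} U = \{ \vvv \in V^{k+1} \mid v_0 \in U \} = U \times V^k$, which is exactly the assertion of locality. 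Equivalently: as $\sss \to 0$ the $k+1$ points at which $f$ is evaluated in $f^{\langle k \rangle}$ all tend to $v_0$, so the domain condition degenerates to $v_0 \in U$.

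For part (ii), I would argue by induction on the degree $k$, exploiting the recursive definition $\hat T^{k+1} = \hat T \circ \hat T^k$, i.e.\ $U^{[k+1]} = (U^{[k]})^{[1]}$. The statement to induct on is the full one: \emph{for every $\ttt$ all of whose singleton components $t_{\{i\}}$ vanish} (the remaining $t_J$ being arbitrary), $\hat T^{(\ttt)} U = U \times V^{2^k - 1}$, with the $U$-factor sitting in the $v_\emptyset$-slot; allowing the non-singleton components of $\ttt$ to stay free is precisely what lets the recursion close. The base case $k=1$ is immediate, since $\hat T^{(0)} U = \{ (x,v) \mid x \in U,\ x + 0\cdot v \in U \} = U \times V$. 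For the inductive step, observe that the scalar newly introduced when passing from $\hat T^k$ to $\hat T^{k+1}$ is the singleton parameter $t_{\{k+1\}}$, and that it plays the role of the outer parameter $s$ in $(U^{[k]})^{[1]} = \{ (\xi,\delta\xi,s) \mid \xi \in U^{[k]},\ \xi + s\,\delta\xi \in U^{[k]} \}$; since $s = t_{\{k+1\}} = 0$ we get $\xi + s\,\delta\xi = \xi$, so the second membership condition is redundant, while the first one --- that $\xi \in U^{[k]}$ with the scalar components of $\xi$ frozen to the appropriate sub-family $\ttt^-$ of $\ttt$ --- says exactly that the space-components of $\xi$ lie in $\hat T^{(\ttt^-)} U$. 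As $\ttt^-$ again has all its singleton components zero, the induction hypothesis turns this into $v_\emptyset \in U$, so the sliced domain is $\{ (v_J)_{J \subset \{1,\ldots,k+1\}} \mid v_\emptyset \in U \} = U \times V^{2^{k+1}-1}$, completing the induction.

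The mathematical content is light; the one step requiring care --- and the likely main obstacle --- is the index bookkeeping of the iterated tangent construction: one must check that ``passing to the $\ttt$-slice'' commutes with the recursion $U^{[k+1]} = (U^{[k]})^{[1]}$, i.e.\ that freezing the scalar components of the inner variable $\xi$ to the right sub-family of $\ttt$ really does reproduce $\hat T^{(\ttt^-)} U$, and that the newly appearing outer scalar is indeed the singleton $t_{\{k+1\}}$ and not one of the higher-order parameters. This is the phenomenon already visible in the explicit formula for $f^{[2]}$ in Section 1.1, where ``the factor $t_{1,2}$ never stands alone'': every monomial in the increment carries some singleton factor $t_i$, hence dies when all the $t_i$ vanish. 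Finally, parts (i) and (ii) are consistent with one another through Theorems \ref{cubictosimplicialTh} and \ref{RingEmbeddingTheorem}: for $\sss = 0$ the associated parameter $\ttt = \ttt(\sss)$ has $t_i = s_{k-i} - s_{k-i-1} = 0$ for every singleton $i$ (while $t_{i,i+1} = 1$), so the simplicial embedding lands exactly in the local case of the cubic functor.
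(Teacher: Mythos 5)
Your proof is correct and takes essentially the same route as the paper: part (i) is the same direct reading of Definition \ref{SJDef} (for $\sss=0$ only the condition $v_0\in U$ survives), and part (ii) is the same induction on $k$, with the domain condition collapsing because the only potentially binding constraint carries a vanishing singleton scalar. The only minor difference is bookkeeping: the paper runs the inductive step through the ring recursion of Lemma \ref{RingRecursionLemma} (the condition $x+\ttt'.w\in T_{k-1}U$, of which only the $\emptyset$-component, weighted by $t_k=0$, could bind), whereas you stay with the raw recursion $U^{[k+1]}=(U^{[k]})^{[1]}$ and observe that the outer scalar is the frozen singleton $t_{k+1}=0$, so the second membership condition is wholly redundant -- both amount to the same verification.
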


\begin{proof} i)
Recall the definition of $\SJ^{(\sss)}U$ (Definition 1.9).
If $\sss=0$, it is obvious that only the condition $v_0 \in U$ remains, and all other
$v_i$ can be chosen arbitrarily, hence the functor $\SJ^{(0)}$ is local.

ii)
Let $F:=\hat T^{(\ttt)}$ and assume that $t_i=0$ for $i=1,\ldots,k$.
We prove by induction that $FU=U \times V^{2^k-1}$.
For $k=1$ and $t_1=0$, 
$$
FU = \{ (x,v) \vert \, x \in U, v \in V, x+t_1v \in U \} = U \times V
$$
(and this case corresponds to the tangent bundle).
For the inductive step, we use the recursion relation for $\A_k$ from Lemma \ref{RingRecursionLemma},
which gives the following recursion relation for the domains (notation as in the lemma, and
write $T_j U $ for $T^{(\ttt)}U$ if $\ttt \in \K^{2^j - 1}$)
$$
T_k U = \{ (x,w) \in T_{k-1} U \times V^{2^{k - 1}} \mid \,
x + \ttt' . w \in T_{k-1} U \} \,
$$
where the product $\ttt'.u$ is the action of the ring $\A_{k-1}$ on the scalar extension
$V_{\A^{k-1}} = V^{2^{k-1}}$. By induction,
$T^{k-1}U=U \times V^{2^{k-1}-1}$. Therefore, writing out the $2^{k-1}$ components of the
condition $x + \ttt'.w \in T_{k-1} U$, only the first component may add a non-trivial
condition (all other conditions mean that some vector lies in $V$, which is always true).
But this first condition is of the form 
$(\ttt')_1 w_1 \in U$, where $(\ttt')_1 = t_k =0$ by asumption, and hence, is also satisfied for
all $w_1 \in V$, hence any $w \in V^{2^{k-1}}$ fulfills the condition.
Moreover, again by induction, any $x \in U \times V^{2{k-1}-1}$ belongs to
$T_{k-1}U$; summarizing, $T_k U = U \times V^{2^k -1}$.
\end{proof}

\begin{theorem}\label{WeilFunctorTheoremBis}
Assume, in the situation of Theorem \ref{FunctorTheorem}, that $F$ is local
(i.e., $\sss$, resp.\ $\ttt$ are as in Theorem \ref{WeilFunctorTheorem}).
Then the bundle
$\pi:FM \to M$ is locally trivial with typical fiber $V^r$ where $r=2^k -1$, resp.\ $r=k$;
in particular,
 as a set and as a topological space, $FM$ does not depend on the atlas ${\mathcal A}$ chosen on $M$.
\end{theorem}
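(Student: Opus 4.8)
The plan is to prove the two assertions separately: local triviality, which given locality is little more than an unwinding of the definitions, and independence of the atlas, which I would obtain from a comparison argument between atlases. Throughout I write $F_{\cA}M$ for the manifold $FM$ built from the atlas ${\cA}$, and $U_i^F := (\{i\}\times FV_i)/{\sim}$ for its chart domains.

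\textbf{Local triviality.} Fix $i\in I$. I would first check that $U_i^F$ coincides with $\pi\inv(U_i)$. The inclusion $U_i^F\subseteq\pi\inv(U_i)$ is immediate; conversely, represent a point of $FM$ lying over $U_i$ in some chart $j$ as a pair $(x,w)\in FV_j$ with $\phi_j\inv(x)\in U_i$, so that $x\in V_{ji}$; since $F$ is local, $F(V_{ji})=V_{ji}\times V^r$ contains $(x,w)$, hence $F(\phi_{ij})(x,w)\in F(V_{ij})\subseteq FV_i$ re-expresses the point with index $i$. Using locality once more in the form $FV_i=V_i\times V^r$, and that $\pi$ is the base projection in charts, the composite
\[
\Phi_i :\ \pi\inv(U_i)=U_i^F\ \xrightarrow{\ F\phi_i\ }\ FV_i=V_i\times V^r\ \xrightarrow{\ \phi_i\inv\times\id\ }\ U_i\times V^r
\]
is a homeomorphism with $\pr_1\circ\Phi_i=\pi$. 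This gives local triviality with typical fibre $V^r$; the change of trivialization $\Phi_i\circ\Phi_j\inv$ over $(U_i\cap U_j)\times V^r$ has base part $\phi_{ij}$ and fibre part the family, continuous in the base point, of homeomorphisms of $V^r$ given by the second component of $F\phi_{ij}$.

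\textbf{Independence of the atlas.} Two atlases ${\cA},{\cA}'$ of class $\CC^{\langle\ell\rangle}$ defining the same manifold structure are subatlases of their union ${\cA}\cup{\cA}'$ (again an atlas of the same class), so it suffices to compare the case ${\cA}\subseteq{\cA}'$, say with index sets $I\subseteq I'$. I would define $\Theta:F_{\cA}M\to F_{{\cA}'}M$, $[i;x,w]\mapsto[i;x,w]$ for $i\in I$. It is well defined and injective because the equivalence relation used to build $F_{\cA}M$ is exactly the restriction of the one used for $F_{{\cA}'}M$ (both are generated by the transition maps $F\phi_{ij}$, which are unchanged for indices in $I$). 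Surjectivity is the step where locality is indispensable: given $[k;z,u]\in F_{{\cA}'}M$, its base point $\phi_k\inv(z)$ lies in some $U_i$ with $i\in I$, whence $z\in V_{ki}$; by locality $(z,u)\in F(V_{ki})=V_{ki}\times V^r$, so $F(\phi_{ik})(z,u)\in F(V_i)$ represents the same point with an index in $I$. Finally $\Theta$ is induced by the evident open inclusion of the disjoint-union presentations (coming from $I\subseteq I'$), and a short argument with the quotient topologies -- using that the relations are generated by the homeomorphisms $F\phi_{ij}$, so that saturations of open sets are open -- shows that $\Theta$ is continuous and open, hence a homeomorphism; moreover $\pi'\circ\Theta=\pi$ by construction. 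Applying this to ${\cA}\subseteq{\cA}\cup{\cA}'$ and ${\cA}'\subseteq{\cA}\cup{\cA}'$ yields a canonical homeomorphism $F_{\cA}M\cong F_{{\cA}'}M$ over $M$.

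\textbf{Where the difficulty lies.} All the genuine content is concentrated in the two locality-dependent points -- the identity $\pi\inv(U_i)=U_i^F$ and, above all, the surjectivity of $\Theta$ -- and both come down to the single observation that locality promotes $F(V_{ki})$ to the \emph{full} product $V_{ki}\times V^r$, so that every point can be re-charted into a chart of the original subatlas. Without locality this breaks down, exactly as in the $\hat T^{(1)}$ example of Section \ref{sec:locality}, where $FM$ genuinely depends on $\cA$ (the ``star'' of a point need not be all of $M$). The remaining verifications that $\Theta$ is a homeomorphism over $M$, and the explicit description of the transition functions $\Phi_i\circ\Phi_j\inv$, are routine quotient-topology bookkeeping.
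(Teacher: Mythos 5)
Your proof is correct, and its first half is essentially the paper's argument: locality gives $F V_i = V_i\times V^r$, so the chart $F\phi_i$ trivializes $\pi$ over $U_i$; the paper phrases this fibrewise, observing that $F_p\phi_i\inv:V^r\to F_pM$, $\vvv\mapsto[i;x,\vvv]$ is a bijection (indeed a $\K$-module isomorphism), and then simply asserts that ``this property does not depend on the atlas.'' Where you genuinely go beyond the paper is the second half: instead of leaving atlas-independence as a one-clause consequence of local triviality, you construct the canonical identification $F_{\cA}M\cong F_{\cA'}M$ by passing to the union atlas and exhibiting the re-charting map $\Theta$, whose surjectivity is exactly where locality enters (any point over $U_i$ can be re-expressed in chart $i$ because $F(V_{ki})$ is the \emph{full} product $V_{ki}\times V^r$) -- the same mechanism that gives your identity $\pi\inv(U_i)=U_i^F$. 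This buys something the paper's terse proof does not spell out: an explicit homeomorphism over $M$ between the two constructions, verified at the level of the quotient topologies, and a clean isolation of the single point (re-charting) that fails for non-local functors such as $\hat T^{(1)}$, where $FM$ really does depend on $\cA$. The remaining steps (well-definedness and injectivity of $\Theta$ from the fact that the gluing data over indices in $I$ are unchanged, continuity and openness via saturations under the homeomorphisms $F\phi_{ij}$) are routine and correctly handled.
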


\begin{proof} For an element $p=[i,x] \in M$, let
$F_p M = \{ [i;x,\vvv] \mid \, (x,\vvv) \in F V_i \}$
be the fiber of $FM$ over $p$. By locality, the map
$F_p \phi\inv_i:V^r \to F_p M$, $\vvv \mapsto [i;x,v]$ is an isomorphism of $\K$-modules.
Thus the bundle $\pi:FM \to M$ is locally trivial with typical fiber $V^r$,
and this property does not depend on the atlas ${\mathcal A}$.
\end{proof}

\ssk
The functors from the preceding theorem are generalizations of the Weil functors from the
real theory, and the corresponding rings generalize the Weil algebras from \cite{KMS}.
We add some final remarks.

\ssk
1.
Comparing the functors $\SJ^{(\sss)}$ and $\hat T^{(\ttt)}$, the ``more efficient'' organization
of the simplicial functor corresponds to the fact it has just one ``local contraction'', whereas
the cubic functor admits many of them. This may lead to the conjecture that the generalized
divided differences also give the `correct' definition of a ``pointwise'' concept of differentiability:
one may say that {\em a map $f$ is $\CC^{\langle k \rangle}$ at a point $p \in U$}
if all limits 
 $\lim_{\sss\to 0,\vvv \to (p,0,\ldots,0)} f^{\rangle j\langle }(\vvv;\sss)$  exist.
The proofs of the chain rule and of the Taylor expansion then go through essentially without any
changes. 
On the cubic level, similar ``pointwise'' concepts can be defined, but appear to be less
natural since the limit condition must be formulated differently (for $t_i \to 0$ the limits
shall exist while the other components of $\ttt$ may remain arbitrary).

\ssk
2.
As mentioned in the introduction, it is an important topic for further work to adapt the approach
to differential geometry and Lie theory over general base rings from \cite{Be08} to this
the simplicial framework introduced here. 
As long as it is not clarified whether the converse of Theorem  \ref{cubictosimplicialTh}
holds, one should still work in the $\CC^{[\infty]}$-category since in this case we already
know that the simplicial jet bundles $\SJ^k M$ over $M$ will be {\em polynomial bundles}
(i.e., the transition functions are polynomial in the fibers); for the 
$\CC^{\langle \infty \rangle}$-category,
this would follow as a corollary from the conjectured converse of Theorem \ref{cubictosimplicialTh}.
Although we did not use partial derivatives in an explicit way, our
interpretation of jet bundles in \cite{Be08} followed common definitions; over $\R$,
or over any ring of characteristic zero, higher order tangent bundles $T^k M$ or
their symmetric parts $J^k M$ are indeed equivalent objects, so that one may work with either
of them. However, the difference between them is that the bundle projections 
$T^k M \to T^jM$ have canonical sections, whereas the bundles $J^k M \to J^j M$ do not.
An intrinsic, or ``simplicial'', theory of the bundles $J^k M$ should not use the
sections of the ambient $T^kM$; such a theory would then automatically be valid for the
bundles $\SJ^k M$, and hence be fully valid also in positive characteristic.
Analogous remarks apply to Lie theory, and in particular to the relation between Lie groups and Lie
algebras. We will discuss such topics in subsequent work.

%{\small 

%}

%\vfill \eject

\end{document}